\renewcommand{\P}{\mathbb{P}}
\newcommand{\cf}{\mathcal{F}}
\newcommand{\ce}{\mathcal{E}}
\newcommand{\N}{\mathbb{N}}
\newcommand{\R}{\mathbb{R}}
\newcommand*{\wih}{\widehat}
\newcommand{\cl}{\mathcal{L}}
\newcommand{\ca}{\mathcal{A}}
\newcommand{\cb}{\mathcal{B}}
\newcommand{\cc}{\mathcal{C}}
\newcommand{\vep}{\varepsilon}
\def\l{\left}
\def\r{\right}
\def\<{\langle}
\def\>{\rangle}
\newtheorem{theorem}{Theorem}[section]
\newtheorem{lemma}[theorem]{Lemma}
\newtheorem{proposition}[theorem]{Proposition}
\newtheorem{observation}[theorem]{Observation}
\theoremstyle{remark}
\newtheorem{remark}{\bf Remark}
\numberwithin{equation}{section}
\title[LFL in random matrices and random permutations]{The Paquette-Zeitouni law of fractional logarithms for the GUE minor process and the Plancherel growth process}
 \author[Baslingker, Basu, Bhattacharjee, Krishnapur]{Jnaneshwar Baslingker, Riddhipratim Basu,\\ Sudeshna Bhattacharjee, Manjunath Krishnapur}
\begin{document}
\begin{abstract}
  It is well-known that the largest eigenvalue of an $n\times n$ GUE matrix and the length of a longest increasing subsequence in a uniform random permutation of length $n$, both converge weakly to the GUE Tracy-Widom distribution as $n\to \infty$. We consider the sequences of the largest eigenvalues of the $n\times n$ principal minor of an infinite GUE matrix, and the the lengths of longest increasing subsequences of a growing sequence of random permutations (which, by the RSK bijection corresponds to the top row of the Young diagrams growing according to the Plancherel growth process), and establish a law of fractional logarithms for these. That is, we show that, under a further scaling of $(\log n)^{2/3}$ and $(\log n)^{1/3}$, the $\limsup$ and $\liminf$ respectively of these scaled quantities converge almost surely to explicit non-zero and finite constants. Our results provide complete solutions to two questions raised by Kalai in 2013. We affirm a conjecture of Paquette and Zeitouni \cite[Conjecture 1.3]{PZ17}, and give a new proof of \cite[Theorem 1.1]{PZ17} who provided a partial solution in the case of GUE minor process. 
  \end{abstract}

 \address{Jnaneshwar Baslingker,  Department of Mathematics, Indian Institute of Science, Bangalore, India}
 \email{jnaneshwarb@iisc.ac.in}
 \address{Riddhipratim Basu, International Centre for Theoretical Sciences, Tata Institute of Fundamental Research, Bangalore, India} 
 \email{rbasu@icts.res.in}
 \address{Sudeshna Bhattacharjee, Department of Mathematics, Indian Institute of Science, Bangalore, India}
 \email{sudeshnab@iisc.ac.in }
 \address{Manjunath Krishnapur, Department of Mathematics, Indian Institute of Science, Bangalore, India}
 \email{manju@iisc.ac.in}
\maketitle

\section{Introduction and Main Results}
Consider a doubly infinite array of random variables $\mathbf{Z}=(Z_{i,j})_{i,j\ge 1}$ where $Z_{i,i}$ are i.i.d. standard (real) Gaussian random variables with mean zero and variance 1, $Z_{i,j}$ for $i>j$ are i.i.d.\ standard complex Gaussian random variables (i.e., the real and imaginary parts are independent centred Gaussian variables with variance $\frac{1}{2}$ each) and $Z_{i,j}=\overline{Z}_{j,i}$ for all $i,j$ ($\bar{z}$ denotes the complex conjugate of $z$). For $n\ge 1$, let $G_{n}$ denote the $n\times n$ sub-matrix given by  $(Z_{i,j})_{1\le i,j\le n}$. Then, $G_{n}$ is a standard $n\times n$ GUE (Gaussian Unitary Ensemble) matrix, and has $n$ real eigenvalues. The collection $\{G_n \}_{n \geq 1}$ is known as the \textit{GUE minor process}. Let $\lambda_n$ denote the largest eigenvalue of $G_{n}$. It is well known (Theorems~3.1.4 and 3.1.5 of \cite{AGZ09}) that 
$$n^{1/6}(\lambda_n-2\sqrt{n})\Rightarrow \mbox{TW}_2$$
in distribution where $\mbox{TW}_2$ is the GUE Tracy-Widom distribution. 

The Tracy-Widom distribution is a universal scaling limit distribution that arises in many other contexts (e.g.\ in the Kardar-Parisi-Zhang universality class). Here is another well known example. Consider a growing sequence of permutations $\sigma_n\in S_n$ (the group of symmetries of $\{1,2,\dots,n\}$), where $\sigma_{n+1}$ is obtained from $\sigma_n$ by inserting the element $(n+1)$ at a randomly chosen location, and shifting all subsequence elements by $1$. It is easy to see that the marginal of $\sigma_n$ is uniform on $S_n$. Let $L_n$ denote the length of a longest increasing subsequence of $\sigma_n$. It is known that \cite{BDJ98}
$$ \frac{L_n-2\sqrt{n}}{n^{1/6}}\Rightarrow  \mbox{TW}_2$$
in distribution.

Kalai \cite{Kalai_question} raised the question of finding an analogue of the  classical law of iterated logarithm in these examples. That is, denoting the scaled quantities above by $\widetilde{\lambda}_n$ and $\widetilde{L}_n$ respectively, do there exist (non-random) scaling functions $f_{+}(\cdot)$ and $f_{-}(\cdot)$ such that 
\begin{equation}
    \label{e:limits}
    \limsup_{n\to \infty}\frac{\widetilde{\lambda}_n}{f_{+}(n)}\quad  \mbox{ and } \quad \liminf_{n\to \infty}\frac{\widetilde{\lambda}_n}{f_{-}(n)}
\end{equation}
almost surely equal non-zero finite constants? What about the quantities 
\begin{equation}
    \label{e:limits1}
    \limsup_{n\to \infty}\frac{\widetilde{L}_n}{f_{+}(n)}\quad  \mbox{ and } \quad \liminf_{n\to \infty}\frac{\widetilde{L}_n}{f_{-}(n)}
\end{equation}
with possibly different scaling functions $f_{\pm}(\cdot)$?
The first result in this direction in the case of the GUE minor process was by Paquette and Zeitouni \cite{PZ17}. They showed that

\begin{equation}
    \label{e:PZ1}
    \limsup_{n\to \infty}\frac{\widetilde{\lambda}_n}{(\log n)^{2/3}}=\left(\frac{1}{4}\right)^{2/3}
\end{equation}
 almost surely, whereas for the lower deviations they showed the weaker result that there are finite positive constants $c_1$ and $c_2$ such that  

\begin{equation}
    \label{e:PZ2}
    \liminf_{n\to \infty}\frac{\widetilde{\lambda}_n}{(\log n)^{1/3}}\in (-c_1,-c_2)
\end{equation}
almost surely. These results were dubbed \emph{law of fractional logarithms} in \cite{PZ17}. They conjectured that the constant exists and equals $-4^{1/3}$ for the liminf \cite[Conjecture 1.3]{PZ17} and our first main result confirms the Paquette-Zeitouni conjecture. 

\begin{theorem}\label{thm: liminf constant}
Almost surely, in the above set-up, 
\begin{align*}
    \liminf_{n\rightarrow\infty}\frac{\left({\lambda_n-2\sqrt{n}}\right){n^{1/6}}}{(\log n)^{1/3}}=-4^{1/3}.
\end{align*}
\end{theorem}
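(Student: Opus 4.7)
The plan is to establish the theorem via two matching bounds: $\liminf_{n\to\infty} \widetilde{\lambda}_n/(\log n)^{1/3} \geq -4^{1/3}$ almost surely, and the reverse inequality almost surely. The scaling $(\log n)^{1/3}$ already tells us that the relevant regime is the deep Tracy--Widom lower tail. Using the sharp asymptotic $\mathbb{P}(\widetilde{\lambda}_n \leq -s) = \exp(-s^3/12 + O(\log s))$ (uniformly in $n$, via the determinantal GUE formulas together with the known edge universality bounds) one sees that, at a single $n$, the probability $\mathbb{P}(\widetilde{\lambda}_n \leq -c(\log n)^{1/3}) \sim n^{-c^3/12}$. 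The exponent $c^3/12 = 1/3$ is attained precisely at $c = 4^{1/3}$, which is the value that must emerge.

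For the lower bound $\liminf \geq -4^{1/3}$, the plan is to decompose $\mathbb{N}$ into dyadic blocks $B_k = [2^k, 2^{k+1}]$ and apply the first Borel--Cantelli lemma to the events $\{\min_{n\in B_k}\widetilde{\lambda}_n \leq -c(\log 2^k)^{1/3}\}$ for any $c > 4^{1/3}$. A naive union bound over the $\Theta(2^k)$ values $n\in B_k$ with the single-$n$ tail yields only $2^{k(1-c^3/12)}$, producing the weak threshold $c > 12^{1/3}$. To obtain the sharp constant, I would exploit the correlation structure of the edge eigenvalue process, using (i) the interlacing monotonicity $\lambda_n \leq \lambda_{n+1}$ inherited from the GUE minor structure, and (ii) the convergence of the rescaled top eigenvalue process to the top line of the Airy line ensemble. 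The goal is to upgrade the union bound to $\mathbb{P}(\min_{n\in B_k}\widetilde{\lambda}_n \leq -c(\log 2^k)^{1/3}) \leq C\, 2^{k(1/3 - c^3/12)}$, i.e.\ the effective number of independent deep-tail trials in a dyadic block of length $N$ is $\Theta(N^{1/3})$ rather than $\Theta(N)$. Summability for $c>4^{1/3}$ then closes the argument.

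For the upper bound $\liminf \leq -4^{1/3}$, the plan is to apply the second Borel--Cantelli lemma (in its Kochen--Stone or pairwise-independence form) along a sufficiently sparse sub-subsequence of dyadic scales $N_j$. At each scale I would establish the matching lower bound
\[
\mathbb{P}\Bigl(\min_{n\in [N_j,2N_j]}\widetilde{\lambda}_n \leq -(4^{1/3}-\varepsilon)(\log N_j)^{1/3}\Bigr) \;\geq\; \eta_\varepsilon > 0
\]
by exhibiting a suitable deep-minimum event for the limiting Airy line ensemble and invoking the convergence result. Asymptotic independence of these events across well-separated scales should follow from a block decomposition of the underlying doubly-infinite GUE array: the new Gaussian entries appearing in $G_{N_{j+1}}\setminus G_{N_j}$ carry essentially all the randomness in $\widetilde{\lambda}_{N_{j+1}}$ when $N_{j+1}/N_j$ is large, allowing a coupling with independent copies modulo negligible error.

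The main obstacle is the sharp block estimate that underlies the lower bound, namely the refinement from the $N$-prefactor of the naive union bound to an $N^{1/3}$-prefactor. This is where the new technical input must concentrate: it requires quantitative control on how much the deep lower-tail events $\{\widetilde{\lambda}_n \leq -s\}$ at different $n \in B_k$ overlap. I expect this will be addressed by combining the Brownian Gibbs property of the Airy line ensemble (providing sharp maximal-type inequalities for the top line) with interlacing-based coupling/resampling arguments, extending the Paquette--Zeitouni framework for the limsup problem to the more delicate liminf side, where the correlations among deep lower-deviation events are considerably subtler.
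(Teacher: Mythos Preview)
Your high-level strategy---sharp lower-tail asymptotic $\exp(-s^3/12)$, decorrelation at scale $n^{2/3}$, and two Borel--Cantelli arguments---matches the paper exactly, and your heuristic that a dyadic block of length $N$ contains $\Theta(N^{1/3})$ ``effectively independent'' trials is precisely the $n^{2/3}$ correlation scale the paper exploits. The difference is entirely in the technical implementation, and here the paper takes a route you do not mention.

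The paper's key move is to abandon random matrix / line ensemble techniques altogether and instead invoke Baryshnikov's distributional identity $(\lambda_n)_{n\ge1}\stackrel{d}{=}(D_{1,n})_{n\ge1}$ with Brownian last passage percolation, then use LPP geodesic geometry. For the lower bound, rather than dyadic blocks plus an $N^{1/3}$-prefactor refinement, the paper works directly with blocks of size $n^{2/3-\varepsilon'}$ (indexed by $n_k=k^{3-\delta}$) and shows via LPP super-additivity (Proposition~\ref{lem: liminf lower bound}) that the block minimum has the \emph{same} lower-tail exponent as a single point---no prefactor at all. For the upper bound, the paper constructs \emph{exactly} independent random variables $X_k$ approximating $D_{1,u_k}$ along $u_k=k^{3+\varepsilon_1}$ by restricting LPP to disjoint parallelograms (Proposition~\ref{p:decorr}); independence is immediate because the parallelograms use disjoint Brownian increments, and closeness follows from transversal fluctuation bounds for geodesics.

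Your proposed tools---Brownian Gibbs for the Airy line ensemble, interlacing, and coupling via the ``new'' GUE entries in $G_{N_{j+1}}\setminus G_{N_j}$---are very much in the spirit of what Paquette--Zeitouni attempted with Fredholm determinant methods; that approach yielded only $\liminf\in(-c_1,-c_2)$ without the sharp constant, and the paper explicitly remarks that it does \emph{not} establish decorrelation via random matrix theory. So while nothing you wrote is wrong, the specific technical route you sketch is the one with historical precedent for falling short of the exact constant. The LPP translation is the missing idea that makes both the block estimate and the decorrelation tractable.
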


We shall also provide a new proof of \eqref{e:PZ1} (see Theorem \ref{t:LPPupper}), and answer Kalai's question for growing random permutations (see Theorem \ref{t: LFL for LIS}). Since the arguments in the case of random permutations will be similar, we shall continue with describing our results and techniques for the GUE minor process case for now; the case for the longest increasing subsequence is discussed in Section \ref{s:lis}.

We prove Theorem \ref{thm: liminf constant} by showing separately that for arbitrary $\varepsilon>0$, almost surely 
$$\liminf_{n\rightarrow\infty}\frac{\left({\lambda_n-2\sqrt{n}}\right){n^{1/6}}}{(\log n)^{1/3}}>-(4+\varepsilon)^{1/3}$$ and  
$$\liminf_{n\rightarrow\infty}\frac{\left({\lambda_n-2\sqrt{n}}\right){n^{1/6}}}{(\log n)^{1/3}}<-(4-\varepsilon)^{1/3}.$$

We also show a general $0-1$ law (see Proposition \ref{p:01}) which shows that under certain conditions on the scaling functions $f_{\pm}$ in \eqref{e:limits} and \eqref{e:limits1} the limits are almost surely constants, but this will not be used to prove our main results.

Our work follows the general framework of \cite{PZ17}, which, like the classical proof of the LIL for simple random walk, consisted of two steps, \emph{sharp tail estimates} and \emph{decorrelation at the correct scale}. It is known \cite{RRV} that for $\beta>0$, the $\beta$ Tracy-Widom distribution (GUE Tracy-Widom distribution corresponds to $\beta=2$) satisfies $\P(\mbox{TW}_{\beta}>a)\approx \exp\l(-\frac{2\beta a^{3/2}}{3}\r)$ and $\P(\mbox{TW}_{\beta}<-a)\approx \exp\l(-\frac{\beta a^{3}}{24}\r)$ as $a\to \infty$. Here, $a_n\approx b_n$ means that $\log a_n/\log b_n$ converges to $1$ as $n\to \infty$. One might therefore expect that for appropriately large values of $a \ll n$ we have 

\begin{equation}
    \label{e:sharp}
    \P(\widetilde{\lambda}_n>a) \approx \exp\l(-\frac{4a^{3/2}}{3}\r); \quad \P(\widetilde{\lambda}_n<-a) \approx \exp\l(-\frac{a^{3}}{12}\r). 
\end{equation}
These will be referred to as the \emph{sharp tail estimates}. \emph{Decorrelation at the correct scale} refers to finding  the scale $k(n)$ such that the random variables $\widetilde{\lambda}_n$ and $\widetilde{\lambda}_{n+k}$ are strongly correlated if $k\ll k(n)$ (in the sense that $\max \limits_{n\le i \le n+k} \widetilde{\lambda}_i$ and $\min \limits_{n\le i \le n+k} \widetilde{\lambda}_i$ can be well approximated by $\widetilde{\lambda}_n$) and are decorrelated if $k\gg k(n)$.

The sharp estimates in \eqref{e:sharp} for the upper tail was known and \cite{PZ17} used this together with a Fredholm determinant formula for the joint distribution of the largest eigenvalues to show decorrelation of the upper tail events $\ce_n=\{\widetilde{\lambda}_n >a_{n}\}$ (for appropriately chosen values of $a_n$) at scale $k(n)=n^{2/3}$ (in fact they showed that $\widetilde{\lambda}_{n_{k}}$ are strongly correlated if $n_{k}=k^{3-\varepsilon}$ and the events $\ce_n$ are approximately independent if $n_{k}=k^{3+\varepsilon}$, which is essentially equivalent). For the lower tail, however, the sharp tail estimates from \eqref{e:sharp} were not available, and \cite{PZ17} instead used a weaker tail estimate from \cite{LR10} (which had the correct exponent $a^3$ but not the correct constant) together with a suboptimal decorrelation estimate for the lower tail events $\cf_n=\{\widetilde{\lambda}_n <-b_{n}\}$ (for a differently chosen sequence $b_n$) to obtain the weaker conclusion described above. Nevertheless, \cite{PZ17} conjectured that the decorrelation also happens for the lower tail events at the same scale $k(n)=n^{2/3}$ which led to their conjecture for the limiting constant in the liminf case. 

Among the two missing ingredients needed to establish the Paquette-Zeitouni conjecture, the sharp tail estimates were recently established in \cite{BBBK24} (in fact \cite{BBBK24} had optimal lower tail estimates of Gaussian $\beta$ ensembles for all $\beta>0$), and in this paper we provide the decorrelation argument establishing Theorem \ref{thm: liminf constant}. Interestingly, we do not establish the decorrelation estimate using random matrix theory, instead we rely on a well-known distributional identity,  due to Baryshnikov \cite{Bar01},  between the largest eigenvalues of the GUE minor process and a vector of passage times in Brownian last passage percolation.

\medskip
\noindent
\textbf{Brownian last passage percolation:}
Let $\{B_i(t):t\geq 0\}_{i\geq 1}$ be i.i.d.\ standard Brownian motions. 
For $t>0$ and $n\ge 1$, define 
\begin{align*}
    D_{t,n}:=\sup _{0=t_0\le t_1\le \dots \le t_{n-1}\le t_n=t}\sum\limits_{i=1}^{n}\left[B_i(t_{i})-B_i(t_{i-1})\right]
\end{align*}
which is the maximum value one can obtain by summing Brownian increments over a partition of $[0,t]$ into $n$ intervals\footnote{Sometimes the numbers $t_i$ in the definition are strictly ordered, e.g.\ in \cite{Bar01}, but it is easy to see by the continuity of the Brownian motion trajectories, that this is equivalent to the above definition} where the Brownian increment contribution from the $i$-th interval comes from the increment of the Brownian motion $B_i(\cdot)$. Interpreting the collection of intervals $[t_{i-1},t_i] \times \{i \}$ as a directed path in $\R_+\times \N$, one can consider $D_{t,n}$ as the last passage time from $(0,1)$ to $(t,n)$ in the Brownian last passage percolation model with noise $\{B_{i}(\cdot)\}$. One can similarly define last passage time from $(s,k)$ to $(t,n)$ denoted $D_{(s,k)\to (t,n)}$ for $t\ge s$ and $n\ge k$. Brownian LPP is one of the few exactly solvable models in the (1+1)-dimensional KPZ universality class. The connection between Brownian LPP and GUE minor process is given by the following result from \cite{Bar01} (see also \cite{AVMW13} for a more general result).

\begin{proposition}[{\cite[Theorem 0.7]{Bar01}}]
    \label{p:BLPP}
    With the above notation, we have 
    $(\lambda_n)_{n\ge 1}\stackrel{d}{=} (D_{1,n})_{n\ge 1}$. 
\end{proposition}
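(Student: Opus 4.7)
Plan: I would prove Baryshnikov's identity by realising both processes, jointly over $n$, as the top row of one and the same Markov process on interlacing arrays of Brownian motions. The key ingredients are Warren's construction on the GUE side and a Pitman-type max-plus transformation on the LPP side.

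\textbf{GUE side.} The Cauchy interlacing theorem forces the eigenvalues $(\lambda_i^{(n)})_{1\le i\le n}$ of the successive minors to form a Gelfand--Tsetlin pattern, and the joint law of this pattern is explicit (it is the law of the Gaussian corners process). Warren's theorem realises this joint law as the value at time $1$ of an interlacing system of Brownian motions, built level by level: at each level $k$ one adds a new Brownian motion that is reflected off the $k$ coordinates at level $k-1$ so as to maintain the interlacing. In particular the top particle at level $n$ at time $1$ has the law of $\lambda_n$.

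\textbf{LPP side.} I would then show by induction on $n$ that the top particle of the \emph{same} iterated reflection system driven by $B_1,\ldots,B_n$, evaluated at time $t$, coincides almost surely with $D_{t,n}$. The base case $n=1$ is immediate from $D_{t,1}=B_1(t)$. For the inductive step, the variational representation gives
\begin{align*}
D_{t,n} \;=\; \sup_{0\le s\le t}\l(D_{s,n-1}+B_n(t)-B_n(s)\r),
\end{align*}
so $D_{\cdot,n}$ is obtained from the previous envelope $D_{\cdot,n-1}$ and the fresh independent Brownian motion $B_n$ by a max-plus (Pitman) transform, which a direct computation identifies with the reflected trajectory that Warren's construction places on top of level $n$.

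The main obstacle is precisely this identification. Warren's construction is phrased in terms of reflecting semimartingales defined through local times, while the LPP side produces its trajectories via a pathwise supremum. Matching them requires showing that the two recipes give rise to the same law on paths, which can be done either by the classical Pitman-to-reflection identity for Brownian motion against a one-sided barrier, extended by induction on the level, or by computing the transition semigroups explicitly and matching them via the Karlin--McGregor / Harish-Chandra--Itzykson--Zuber determinantal formulas. An alternative is to bypass the continuous identification entirely: discretise by replacing Brownian motions by random walks with Gaussian increments on a fine mesh, apply the Robinson--Schensted--Knuth correspondence to equate the discrete LPP times with shapes of RSK tableaux under the associated Schur/Gaussian measure, and take the scaling limit; this is close in spirit to Baryshnikov's original approach.
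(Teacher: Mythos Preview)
The paper does not prove this proposition; it is quoted verbatim from \cite[Theorem~0.7]{Bar01} and used as a black box, so there is no in-paper argument to compare your proposal against.

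That said, your plan is a legitimate route to the result and is essentially the content of later literature around Baryshnikov's theorem. The identification of the GUE corners process with the time-$1$ marginal of Warren's interlacing reflected Brownian motions, together with the O'Connell--Yor observation that the iterated Pitman/max-plus transform of $B_1,\ldots,B_n$ produces $D_{\cdot,n}$ as the top trajectory, does give the joint identity $(\lambda_n)_{n\ge1}\overset{d}{=}(D_{1,n})_{n\ge1}$. You have correctly flagged the genuine technical point: Warren's reflection is defined via local times (Skorokhod reflection), while the LPP recursion is a pathwise supremum, and one must check these agree; for a single barrier this is the classical Skorokhod--L\'evy identity $\sup_{s\le t}(W_s)-W_t\overset{d}{=}|W_t|$ at the level of processes, and the inductive extension is exactly what Warren carries out. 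Your alternative --- discretise, apply RSK to match LPP with Young-tableau shapes under a Schur-weighted measure, then take a Brownian limit --- is precisely Baryshnikov's original argument in \cite{Bar01}. Either approach is substantially more than the paper itself supplies, since the authors are content to cite the result.
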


Using the sharp tail estimates for $D_{1,n}$ from \cite{BBBK24} together with decorrelation estimates obtained using the geometry of geodesics (optimal paths) in Brownian LPP we shall prove the following result, which, by Proposition \ref{p:BLPP} is a restatement of Theorem \ref{thm: liminf constant}.

\begin{theorem}
    \label{t:LPP}
    Almost surely, 
    \begin{align}
\liminf_{n\rightarrow\infty}\frac{\left({D_{1,n}-2\sqrt{n}}\right){n^{1/6}}}{(\log n)^{1/3}}=-4^{1/3}.
\end{align}
\end{theorem}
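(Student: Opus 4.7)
\textbf{Proof plan for Theorem \ref{t:LPP}.} The plan is to establish matching bounds on the $\liminf$, following the decomposition stated after Theorem \ref{thm: liminf constant}: for every $\varepsilon > 0$, almost surely
\[
\liminf_{n \to \infty} \tfrac{(D_{1,n} - 2\sqrt n)n^{1/6}}{(\log n)^{1/3}} \geq -(4+\varepsilon)^{1/3} \quad \text{and} \quad \liminf_{n \to \infty} \tfrac{(D_{1,n} - 2\sqrt n)n^{1/6}}{(\log n)^{1/3}} \leq -(4-\varepsilon)^{1/3}.
\]
Abbreviate $\wt D_n := (D_{1,n} - 2\sqrt n)n^{1/6}$. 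The sharp lower-tail input from \cite{BBBK24} gives $\P(\wt D_n < -a) = \exp(-\tfrac{a^3}{12}(1+o(1)))$ in a suitable window of $a$; with $a = c(\log n)^{1/3}$ this is $n^{-c^3/12 + o(1)}$. Along the natural subsequence $n_k \sim k^3$ (for which $n_{k+1}-n_k \sim n_k^{2/3}$, the BLPP decorrelation scale), the per-trial probability is $k^{-c^3/4 + o(1)}$, and the critical constant $c = 4^{1/3}$ is the boundary between summable and non-summable.

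\emph{Easier direction, $\liminf \geq -(4+\varepsilon)^{1/3}$.} Fix $c = (4+\varepsilon)^{1/3}$ and $n_k = \lfloor k^3 \rfloor$. First Borel--Cantelli, applied to $\sum_k k^{-c^3/4} < \infty$, gives $\wt D_{n_k} \geq -c(\log n_k)^{1/3}$ eventually almost surely. To propagate this to all $n \in [n_k, n_{k+1}]$, I would bound the block shortfall $\Delta_k := \max_{n_k \leq n \leq n_{k+1}} (\wt D_{n_k} - \wt D_n)_+$. Using $D_{1,n} \geq D_{1,n_k}$ (which follows from taking $t_{n_k} = t_{n_k+1} = \dots = t_n = 1$ in the definition of $D_{1,n}$), together with $2\sqrt n - 2\sqrt{n_k} = O(n_k^{-1/3}(n - n_k)) = O(n_k^{1/3})$ on this window and the prefactor $n^{1/6}$, the centered difference reduces to a single-scale Airy-type fluctuation. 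Standard Tracy--Widom tails then yield $\P(\Delta_k > \varepsilon_1 (\log n_k)^{1/3}) \leq n_k^{-\kappa \varepsilon_1^3}$ for some $\kappa > 0$; this is summable, and a second Borel--Cantelli closes this direction.

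\emph{Harder direction, $\liminf \leq -(4-\varepsilon)^{1/3}$.} Fix $c = (4-\varepsilon)^{1/3}$ and choose a slightly sparser subsequence $n_k = \lfloor k^{3+\delta} \rfloor$ with $\delta > 0$ small enough that $(3+\delta)c^3 < 12$, so the probabilities $\P(E_k) = \P(\wt D_{n_k} < -c (\log n_k)^{1/3}) = k^{-(3+\delta)c^3/12 + o(1)}$ are non-summable, while at the same time the spacing $n_{k+1} - n_k \sim k^{2+\delta} \gg n_k^{2/3}$ exceeds the decorrelation scale. I then run a second Borel--Cantelli on surrogates $E_k^{\sharp}$ depending only on $\{B_i : i \in I_k\}$ for pairwise disjoint bands $I_k$ of width $\sim n_k^{2/3+\eta}$, suitably placed to capture the typical range of the geodesic ending at $(1,n_k)$. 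A quantitative transversal localization statement for the BLPP geodesic -- that it lives in an $n_k^{2/3+\eta}$-strip with probability $\geq 1 - n_k^{-A}$ for any $A$ -- permits replacing $D_{1,n_k}$ by a slab-restricted $D^{(k)}_{1,n_k}$; the resulting surrogates $E_k^{\sharp}$ are independent. Since the approximation errors are summable in $k$, independent-event Borel--Cantelli for the $E_k^{\sharp}$ transfers to $E_k$ and delivers infinitely many occurrences of $\wt D_{n_k} < -c(\log n_k)^{1/3}$.

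\emph{Main obstacle.} The principal difficulty is the decorrelation in the hard direction: a quantitative transversal-localization estimate that survives conditioning on the atypical lower-tail event $\{\wt D_n < -c(\log n)^{1/3}\}$. In the ``typical'' regime the localization of BLPP geodesics to an $n^{2/3+o(1)}$-strip is classical, but on the rare lower-tail event the passage time is small and the usual competition arguments become delicate. A cleaner workaround I would pursue is to define $D^{(k)}_{1,n_k}$ directly as the last-passage time restricted to the slab $I_k$ (automatically $\leq D_{1,n_k}$), and then upper-bound the leakage $D_{1,n_k} - D^{(k)}_{1,n_k}$ by a Tracy--Widom tail for passage times in the complementary strip, checking that this bound decays faster than $n_k^{-c^3/12}$ for some $\eta>0$. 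The remaining ingredients -- sharp one-point tails from \cite{BBBK24}, choice of subsequence, Borel--Cantelli, and the monotonicity-plus-Airy-fluctuation argument in the easier direction -- are essentially bookkeeping once the slab-surrogate leakage is controlled.
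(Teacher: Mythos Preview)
Your overall architecture (sharp tails $+$ block control $+$ decorrelation $+$ Borel--Cantelli) matches the paper's, but both directions have a genuine gap as written.

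In the easier direction, the monotonicity bound $D_{1,n}\ge D_{1,n_k}$ is far too weak to control $\Delta_k$. Over a block of width $\asymp n_k^{2/3}$ the centering shift is $2\sqrt{n}-2\sqrt{n_k}\asymp n_k^{-1/2}(n-n_k)\asymp n_k^{1/6}$ (your $O(n_k^{-1/3}(n-n_k))$ uses the wrong derivative), so after multiplying by $n^{1/6}$ the deterministic part of $\wt D_{n_k}-\wt D_n$ is of order $n_k^{1/3}$, which swamps $(\log n_k)^{1/3}$. To get $\P(\Delta_k>\varepsilon_1(\log n_k)^{1/3})\le n_k^{-\kappa\varepsilon_1^3}$ you must show that $D_{1,n}-D_{1,n_k}$ is close to its mean, not merely nonnegative; this is precisely the content of Proposition~\ref{lem: liminf lower bound}, which the paper proves by first scaling to the $(n,n)$ geometry and then using superadditivity $D_{n,i}\ge D_{n-m,n-m}+\widehat D$ to split into a main diagonal piece and a short remainder, each controlled by one-point tails. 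You cannot extract this from ``monotonicity plus standard Tracy--Widom tails''. (The paper also takes $n_k=k^{3-9\varepsilon'}$ so the block width is $n_k^{2/3-\varepsilon'}$, strictly below the decorrelation scale.)

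In the harder direction, a surrogate depending only on $\{B_i:i\in I_k\}$ for a band $I_k$ of $n_k^{2/3+\eta}$ rows cannot work: the geodesic from $(0,1)$ to $(1,n_k)$ visits \emph{every} row $1,\dots,n_k$, so no row-band captures it, and your slab-restricted $D^{(k)}_{1,n_k}$ is not even well-defined (row $1\notin I_k$). The correct localization is in space-time: at time $t\in[0,1]$ the geodesic is near row $tn_k$ with row-fluctuation $n_k^{2/3}$. The paper (Proposition~\ref{p:decorr}) therefore restricts to a thin parallelogram $P_{n}$ following the line $y=xn$ for $x\in[n^{-\varepsilon_1/100},1]$, replaces the excised initial segment by its expected value $f(v)=2\sqrt{tr}$, and sets $X_k=Y_{u_k}=\max_{v\in J_{u_k}}\bigl(f(v)+D^{P_{u_k}}_{v\to(1,u_k)}\bigr)$. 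The parallelograms $P_{u_k}$ for $u_k=k^{3+\varepsilon_1}$ are disjoint --- they use disjoint Brownian \emph{increments}, not disjoint Brownian motions --- so the $X_k$ are independent, and $|X_k-D_{1,u_k}|$ is controlled by transversal-fluctuation estimates (Lemma~\ref{lemma: transversal fluctuation}) together with one-point tails on the short entry interval $J_n$. Your ``cleaner workaround'' is in the right spirit, but the independence must come from restricting to tilted regions in $[0,1]\times\mathbb{N}$, not from selecting whole Brownian motions.
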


Observe that, from the point of view of last passage percolation, Theorem \ref{t:LPP} is not a natural result. It would be more natural to consider the extremal behaviour of the sequence $\{n^{-1/3}(D_{n,n}-2n)\}$, which, by the scale invariance of Brownian motion, has the same marginal distribution as $\left({D_{1,n}-2\sqrt{n}}\right){n^{1/6}}$ but has a different coupling.

Indeed, such questions were considered (mostly in the context of exponential LPP); see \cite{L18,BGHK21,BBBK24}. However, in this case the decorrelation scale turns out to be linear i.e., $k(n)\sim n$ instead of a sub-linear polynomial and one therefore gets a \emph{law of iterated logarithm} (instead of a law of fractional logarithm) as in the classical simple random walk case. Indeed, following the earlier works \cite{L18,BGHK21}, a full statement of a law of iterated logarithm in that case was established in \cite{BBBK24} using tail estimates and geometry of geodesics. Although we use the same ingredients, our decorrelation argument is qualitatively different from those in \cite{BBBK24}. Further, using our general decorrelation statement (see Proposition \ref{p:decorr}) we shall also give a different proof of \eqref{e:PZ1}. The following theorem, by Proposition \ref{p:BLPP}, is restatement of \eqref{e:PZ1} (which is also \cite[Theorem 1.1]{PZ17}).  

\begin{theorem}
    \label{t:LPPupper}
    Almost surely, 
    \begin{align}
\limsup_{n\rightarrow\infty}\frac{\left({D_{1,n}-2\sqrt{n}}\right){n^{1/6}}}{(\log n)^{2/3}}=\left(\frac{1}{4}\right)^{2/3}.
\end{align}
\end{theorem}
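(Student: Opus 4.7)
The plan is to carry out the two-step framework of \cite{PZ17} (sharp tail estimate plus decorrelation at scale $k(n)=n^{2/3}$) entirely within Brownian LPP via Proposition \ref{p:BLPP}. The known sharp upper tail in \eqref{e:sharp} gives $\log \P(\widetilde{D}_{1,n} > a) = -(1+o(1))\tfrac{4 a^{3/2}}{3}$ in the relevant range of $a$; substituting $a = c(\log n)^{2/3}$ produces tail decay of order $n^{-4c^{3/2}/3}$. The critical exponent $4c^{3/2}/3 = 1/3$ singles out $c = (1/4)^{2/3}$ as the target limsup, and the task is to convert this one-point heuristic into a strong law via a Borel--Cantelli argument on a well-chosen subsequence.

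\textbf{Upper bound.} Fix $\varepsilon>0$ and set $c_+ = (1/4)^{2/3}+\varepsilon$. On the cubic subsequence $n_k := \lfloor k^3 \rfloor$ the sharp tail gives
\[
\P\!\left(\widetilde{D}_{1,n_k} > c_+ (\log n_k)^{2/3}\right) \lesssim n_k^{-\tfrac{1}{3}-\eta} \asymp k^{-1-3\eta}
\]
for some $\eta=\eta(\varepsilon)>0$, which is summable, so Borel--Cantelli forces $\widetilde{D}_{1,n_k} \leq c_+ (\log n_k)^{2/3}$ for all large $k$ a.s. To pass from the subsequence to all $n$, observe that $n_{k+1}-n_k \asymp k^2 \asymp n_k^{2/3}$, the strong-correlation scale: a separate maximal estimate---either via a modulus-of-continuity statement for the prelimit Airy$_2$-type process inherited from Baryshnikov's coupling, or via a direct union bound combined with tail comparison of $\widetilde{D}_{1,n} - \widetilde{D}_{1,n_k}$ on this window---shows that these oscillations are $o((\log n_k)^{2/3})$ a.s., completing the upper bound.

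\textbf{Lower bound.} Fix a small $\varepsilon>0$ and set $c_- = (1/4)^{2/3}-\varepsilon$. Choose $\delta=\delta(\varepsilon)>0$ small enough that $(3+\delta)\cdot \tfrac{4 c_-^{3/2}}{3} < 1$ and take $n_k := \lfloor k^{3+\delta}\rfloor$. Then the sharp tail yields
\[
\sum_k \P\!\left(\widetilde{D}_{1,n_k} > c_- (\log n_k)^{2/3}\right) \asymp \sum_k k^{-(3+\delta)\cdot \tfrac{4 c_-^{3/2}}{3}} = \infty,
\]
while $n_{k+1}-n_k \gg n_k^{2/3}$ places us in the decorrelation regime. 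Invoking the general decorrelation statement (Proposition \ref{p:decorr}), the events $\{\widetilde{D}_{1,n_k} > c_-(\log n_k)^{2/3}\}$ are approximately independent in the Kochen--Stone sense, and the second Borel--Cantelli lemma then forces infinitely many of them to occur a.s. Sending $\varepsilon \downarrow 0$ along a countable sequence finishes the proof.

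\textbf{Main obstacle.} The technical heart is the decorrelation at scale $n^{2/3}$. Unlike \cite{PZ17}, whose proof relied on a Fredholm determinant formula specialized to upper-tail events, the approach here is geometric: when $m-n \gg n^{2/3}$, the Brownian LPP geodesic to $(1,m)$ has transversal fluctuation $\ll m-n$ at height $n$, so the Brownian increments driving the geodesic above height $n$ are, up to negligible error, independent of the environment that determines $D_{1,n}$. Making this resampling quantitative and uniform enough to preserve the sharp constants in the tail is the main challenge, and is exactly what Proposition \ref{p:decorr} is designed to deliver.
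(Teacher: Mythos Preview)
Your proposal is correct and follows essentially the same two-step framework as the paper (sharp tail plus $n^{2/3}$ decorrelation via Proposition~\ref{p:decorr}), with two small organizational differences worth noting.

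For the upper bound, the paper does not split into ``subsequence estimate plus oscillation control'' as you do; instead it applies Proposition~\ref{lemma: maximum of point to interval estimates} directly, which bounds $\max_{n\le j\le n+n^{2/3-\varepsilon'}}(\widetilde D_{1,j}\ge x)$ by essentially the one-point tail. With $n_k=k^{3-9\varepsilon'}$ the gap $n_{k+1}-n_k\le n_k^{2/3-\varepsilon'}$ fits into that proposition, and a single Borel--Cantelli suffices. Your route would work too, but the ``separate maximal estimate'' you allude to is precisely Proposition~\ref{lemma: maximum of point to interval estimates}; note that a naive union bound over the $\asymp n_k^{2/3}$ integers in the gap would not be summable, and your exact choice $n_k=k^3$ gives gaps at the critical scale $n_k^{2/3}$ rather than slightly below it.

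For the lower bound, Proposition~\ref{p:decorr} gives something stronger than Kochen--Stone-type approximate independence: it produces genuinely independent random variables $X_k$ with $|X_k-D_{1,u_k}|\le \varepsilon_2 u_k^{-1/6}$ outside a set of summable probability. The paper transfers the one-point lower tail bound to the $X_k$'s and then applies the ordinary second Borel--Cantelli lemma for independent events, which is cleaner than a correlation inequality.
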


\medskip

\noindent
\textbf{Brownian LPP estimates:} Our proof of Theorem \ref{t:LPP} (and also Theorem \ref{t:LPPupper}) will consist of the following estimates at the level of Brownian LPP, which can then be used to prove the lower bound and the upper bound for the liminf in Theorem \ref{t:LPP}. The first estimate we need is the following sharp tail estimate from \cite{BBBK24} (the following is a restatement from \cite{BBBK24} using Proposition \ref{p:BLPP}).  

\begin{proposition}\cite[Theorem 1.4, Theorem 1.5]{BBBK24}
\label{p: lower tail estimates}For any $\vep>0$ there exist $n_{\vep}$ and $x_{\vep}$ such that for all $n \geq n_{\vep}$ and $x_{\vep} \leq x \leq n^{1/10}$ the following hold
\[
\exp \left(-\frac{1}{12}(1+\vep) x^3\right) \leq \P \left(\left(D_{1,n}-2 \sqrt{n}\right)n^{1/6} \leq -x \right) \leq \exp \left( -\frac{1}{12}(1-\vep)x^3\right).
\]
\end{proposition}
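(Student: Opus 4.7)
This is a sharp two-sided moderate deviation estimate for the lower tail of $\widetilde{D}_n := (D_{1,n}-2\sqrt{n})n^{1/6}$, uniformly in $x_\vep \le x \le n^{1/10}$. The constant $\tfrac{1}{12}$ is pinned down by the Tracy-Widom asymptotic $\P(\mbox{TW}_2 < -x) = \exp(-\tfrac{x^3}{12}(1+o(1)))$ as $x \to \infty$ (due to Ramirez-Rider-Virag), with $\tfrac{1}{12} = \tfrac{\beta}{24}$ at $\beta=2$; the content of the proposition is that this asymptotic survives well beyond the $O(1)$ Tracy-Widom window.

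My plan is to pass through Baryshnikov's identity (Proposition \ref{p:BLPP}) and work with $(\lambda_n - 2\sqrt{n})n^{1/6}$ on the random matrix side, invoking the Dumitriu-Edelman tridiagonal representation: the spectrum of $G_n$ agrees in law with that of a tridiagonal matrix $T_n$ with independent standard Gaussians on the diagonal and (rescaled) $\chi$-distributed entries on the sub-diagonal. In the edge scaling, $T_n$ converges to the stochastic Airy operator $\msf{SAO}_2 = -\partial_x^2 + x + \sqrt{2}\, b'(x)$ on the half-line with Dirichlet boundary at $0$, and its ground state eigenvalue $\Lambda_0$ is captured by the Riccati transform $p = \psi'/\psi$ of the top eigenfunction: the event $\{\Lambda_0 \le -x\}$ is equivalent, via the Sturm oscillation theorem, to the Riccati diffusion $p_{-x}$ having no blow-ups on $[0,\infty)$.

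For the lower bound on the probability (the easier direction), I would construct an explicit rare event by tilting the Brownian increment $b'$ so that $p_{-x}$ hugs the unique deterministic Riccati trajectory that survives to $+\infty$ at eigenvalue parameter $-x$; a Cameron-Martin / variational computation then pegs the entropy cost of this tilt at $\tfrac{x^3}{12}(1+o(1))$. For the matching upper bound on the probability, the plan is to argue that any realisation with $\Lambda_0 \le -x$ forces $p_{-x}$ to live in a narrow tube around the optimal trajectory, and then to bound the probability of this tube by a Girsanov change of measure combined with a Gaussian concentration inequality for the quadratic form $\int b'^2$. An alternative route specific to $\beta=2$ would use Fredholm determinant and Painleve II asymptotics directly, but the Riccati approach is cleaner if one wants to mimic the general $\beta$-ensemble strategy of \cite{BBBK24}.

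The main obstacle is the uniform-in-$x$ transfer from the $\msf{SAO}_2$ limit to the pre-limit $T_n$: the Ramirez-Rider-Virag convergence is usually stated only for top eigenvalues of $O(1)$ in edge scale, whereas here the sharp exponential constant must be tracked all the way to $x = n^{1/10}$. This demands quantitative control on the Brownian coupling of the sub-diagonal $\chi$-random walk, together with stability of the Riccati explosion times over an $x$-dependent window of length $\sim x$ in the edge variable; both ingredients have to be strong enough that their error contributions are $o(x^3)$ uniformly in the stated range. Once these GUE-side estimates are in place, the Brownian LPP formulation stated in the proposition is immediate from Proposition \ref{p:BLPP}.
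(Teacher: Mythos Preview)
The present paper does not prove this proposition at all: it is imported verbatim from \cite{BBBK24} (stated there as Theorems 1.4 and 1.5) and used as a black-box input to the law-of-fractional-logarithm argument. There is thus no ``paper's own proof'' to compare your proposal against; the only transformation the paper performs is to restate the GUE-side estimate of \cite{BBBK24} in Brownian LPP language via Baryshnikov's identity (Proposition~\ref{p:BLPP}), exactly the translation you note at the end.

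Your sketch is a plausible high-level summary of the kind of argument one expects \cite{BBBK24} to carry out---that paper does treat general $\beta$-ensembles, which points toward the tridiagonal / stochastic Airy / Riccati route rather than a $\beta=2$-specific determinantal computation, and you have correctly flagged the crux: making the Ramirez--Rider--Vir\'ag machinery quantitative enough to reach $x$ of order $n^{1/10}$ with errors $o(x^3)$ in the exponent. But what you have written is an outline rather than a proof; the actual execution of the uniform-in-$x$ coupling and the tube/tilting estimates is substantial, and if you intend to reproduce it you would need to consult \cite{BBBK24} directly rather than the present paper.
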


The next estimate shows strong correlation at the scale $k\ll n^{2/3}$.

\begin{proposition}\label{lem: liminf lower bound} 
Fix any $\vep,\vep'>0$ small enough. Then there exists $n_{\vep,\vep'}$ and $x_\vep$ such that for all $n\geq n_{\vep,\vep'}$, and $x_{\vep} \leq x \leq n^{1/20}$ we have  
\begin{align*}
    \P\l(\min\limits_{n\leq i\leq n+n^{\frac{2}{3}-\vep'}}\l(D_{1,i}-2\sqrt{i}\r)i^{1/6}\leq -x\r)\leq \exp \left(-\frac{1}{12}(1-\vep)x^3 \right).
\end{align*}
\end{proposition}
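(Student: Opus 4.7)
The plan is to combine the sharp one-point lower tail from Proposition~\ref{p: lower tail estimates} with a decorrelation estimate derived from the geometry of geodesics in Brownian LPP. Writing $k = \lfloor n^{2/3 - \vep'} \rfloor$ and $\tilde D_j := (D_{1,j} - 2\sqrt{j}) j^{1/6}$, fix $\delta = \delta(\vep) > 0$ small enough that $(1-\vep/3)(1-\delta)^3 \ge 1-\vep$, and use the union bound
\[
\P\l(\min_{n \le i \le n+k} \tilde D_i \le -x\r) \le \P\l(\tilde D_n \le -(1-\delta)x\r) + \P\l(\max_{n \le i \le n+k}(\tilde D_n - \tilde D_i) > \delta x\r).
\]
Proposition~\ref{p: lower tail estimates} (applied with tolerance $\vep/3$) bounds the first term by $\tfrac12 \exp(-(1-\vep)x^3/12)$, so the substantive task is to bound the ``spread'' term with the same exponential.

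To bound the spread I would use the Brownian LPP path decomposition $D_{1,i} \ge D_{(0,1) \to (1-\alpha, n)} + D_{(1-\alpha, n+1) \to (1,i)}$ combined with the reverse path-extension inequality $D_{(0,1) \to (1-\alpha, n)} \ge D_{1,n} - (B_n(1) - B_n(1-\alpha))$, which holds on the ``good event'' $G$ that the geodesic achieving $D_{1,n}$ has its final jump time $t_{n-1}^{(n)}$ at most $1-\alpha$. Choosing $\alpha = k/(4n)$, the deterministic drift $2\sqrt{\alpha(i-n)}$ of $D_{(1-\alpha, n+1) \to (1,i)}$ majorizes the centering $2(\sqrt{i} - \sqrt{n})$ uniformly for $i - n \le k$; by scale invariance $D_{(1-\alpha, n+1) \to (1,i)} \stackrel{d}{=} \sqrt\alpha\, D'_{1, i-n}$ with an independent family of Brownian motions, so its sharp cubic lower tail follows from Proposition~\ref{p: lower tail estimates}. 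Rescaling, on $G$ one gets
\[
\tilde D_n - \tilde D_i \le V + U_{i-n} + o(1),
\]
where $V := n^{1/6}(B_n(1) - B_n(1-\alpha))$ is Gaussian of variance $\tfrac14 n^{-\vep'}$, and $U_j$ is a negative TW-like variable scaled by $\tfrac12 n^{-\vep'/2} j^{-1/6}$, satisfying $\P(U_j > y) \le \exp(-\tfrac23 y^3\, n\, j^{1/2}/k^{3/2})$.

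To cover the full range $x_\vep \le x \le n^{1/20}$ I split into two subregimes: (i) for $x \ge C_\vep(\log n)^{1/3}$ with $C_\vep = C_\vep(\vep)$ large, a crude union bound of Proposition~\ref{p: lower tail estimates} over the $k+1$ indices already suffices, since $\log k$ is absorbed into the cubic exponent; and (ii) for $x_\vep \le x \le C_\vep(\log n)^{1/3}$, the decomposition above yields a Gaussian tail $\P(V > \delta x/2) \le \exp(-\Omega(x^2 n^{\vep'}))$ and a summed cubic tail $\sum_j \P(U_j > \delta x/2) \le k\exp(-\Omega(x^3 n^{3\vep'/2}))$, both far smaller than $\exp(-(1-\vep)x^3/12)$ for $n$ large, since $x$ is at most polylogarithmic in this subregime. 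The main obstacle is handling the bad event $G^c$, whose probability $O(n^{-\vep'})$ from the standard transversal-fluctuation estimate exceeds the target bound when $x$ is small; I would dispose of it either by choosing $\alpha$ adaptively in terms of $t_{n-1}^{(n)}$ (at the cost of a smaller drift contribution from $D_{(1-\alpha, n+1) \to (1, i)}$ near $i = n+k$), or by sharpening the transversal-fluctuation tail near the endpoint using the refined geodesic estimates available in Brownian LPP; this quantitative geodesic control is the key technical ingredient beyond~\cite{PZ17}.
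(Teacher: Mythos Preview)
Your decomposition into a one-point tail plus a spread term is sound, and regime~(i) via the crude union bound is fine, but there is a real gap in regime~(ii), precisely at the event $G^c$ you flag. For the crude union bound to take over at $x=C_\vep(\log n)^{1/3}$ you need $C_\vep$ of order $\vep^{-1/3}$, so at the top of regime~(ii) the target $\exp\bigl(-\tfrac{1}{12}(1-\vep)x^3\bigr)$ is of order $n^{-c/\vep}$; a bound $\P(G^c)=O(n^{-\vep'})$ with $\vep'$ small and fixed cannot match this. Neither proposed fix closes the gap. Taking $\alpha$ adaptively equal to $1-t_{n-1}^{(n)}$ merely relocates the problem: the drift condition $2\sqrt{\alpha(i-n)}\ge 2(\sqrt i-\sqrt n)$ at $i=n+k$ requires $\alpha\ge k/(4n)$, and the event $\{1-t_{n-1}^{(n)}<k/(4n)\}$ on which this fails is exactly $G^c$. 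And the rescaled last-jump location $n^{1/3}(1-t_{n-1}^{(n)})$ has a nondegenerate limit law, so $\P(G^c)$ is genuinely only polynomially small and no sharpening of the transversal-fluctuation input will produce the required super-polynomial decay.

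The paper avoids this obstacle by never comparing $D_{1,i}$ to $D_{1,n}$. After the Brownian scaling $\{D_{1,i}\}_i\stackrel{d}{=}\{n^{-1/2}D_{n,i}\}_i$, it uses plain super-additivity through the \emph{diagonal} point $(n-n_{\vep'},n-n_{\vep'})$ with $n_{\vep'}=n^{1-3\vep'/2}$:
\[
D_{n,i}\ \ge\ D_{n-n_{\vep'},\,n-n_{\vep'}}\ +\ \widehat D_{n_{\vep'},\,i-(n-n_{\vep'})}.
\]
This inequality holds unconditionally, so no geodesic event enters. The first summand is a single full-scale passage time and carries the sharp constant via Proposition~\ref{p: lower tail estimates}. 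Each second summand lives at the smaller fluctuation scale $n_{\vep'}^{1/3}=n^{1/3-\vep'/2}$, so a deviation of size $\vep x n^{1/3}$ is of relative order $x n^{\vep'/2}$ and has probability at most $\exp(-c_\vep x^3 n^{3\vep'/2})$, which survives the union bound over the $n^{2/3-\vep'}$ indices. The structural difference is that the paper lower-bounds every $D_{n,i}$ by a \emph{common} large piece plus an $i$-dependent small piece, rather than controlling the increments $D_{1,i}-D_{1,n}$; this is what eliminates the need for any geodesic input.
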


The following result is our main decorrelation estimate. 

\begin{proposition}
    \label{p:decorr}
    For all $\varepsilon_1,\varepsilon_2>0$ {small enough} and $u_{k}=\lfloor k^{3+\varepsilon_1}\rfloor,$ there exists $\delta=\delta(\varepsilon_1,\varepsilon_2)>0$ and $k_0=k_0(\varepsilon_1)>0$ and a sequence of independent random variables $X_{k}$ such that for all $k>k_0$
    \begin{equation}
    \label{e:approx}
        \P\l(\l | X_{k}-D_{1,u_k}\r |\ge \varepsilon_2 (u_k)^{-1/6}\r)\le \exp\l(-k^{\delta}\r). 
    \end{equation}
\end{proposition}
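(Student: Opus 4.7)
The plan is to construct each $X_k$ as a measurable function of Brownian motions with indices in a range that is disjoint across $k$, so that mutual independence is automatic, and to then show that each $X_k$ is close to $D_{1, u_k}$ within the stated error. The key quantitative input is that $u_k - u_{k-1} \sim k^{2+\varepsilon_1}$ exceeds the KPZ transversal scale $u_k^{2/3} \sim k^{2+2\varepsilon_1/3}$ at the endpoint $(1, u_k)$, so the leading fluctuations of $D_{1, u_k}$ are accessible from only the Brownian motions $B_i$ with $i \in (u_{k-1}, u_k]$. I would start from the decomposition
\[
D_{1, u_k} = \sup_{s \in [0, 1]} \bigl[L_k(s) + U_k(s)\bigr], \quad L_k(s) := D_{(0, 1) \to (s, u_{k-1})}, \quad U_k(s) := D_{(s, u_{k-1}+1) \to (1, u_k)},
\]
in which $L_k$ and $U_k$ are independent because they depend on disjoint collections of Brownian motions.

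Using transversal-fluctuation estimates for the geodesic of $D_{1, u_k}$ at its crossing of level $u_{k-1}$, which is at backward distance $u_k - u_{k-1}$ from the endpoint and hence has time-coordinate fluctuations of order $(u_k - u_{k-1})^{2/3}/u_k \sim k^{-(5+\varepsilon_1)/3}$, I would show that with probability at least $1 - \exp(-k^{\delta})$ the argmax of the decomposition lies in the window
\[
W_k := \bigl[s_k^{\star} - \ell_k,\; s_k^{\star} + \ell_k\bigr], \qquad s_k^{\star} := u_{k-1}/u_k, \qquad \ell_k := k^{-(5+\varepsilon_1)/3 + \eta},
\]
for some small $\eta > 0$ (yielding $\delta = 3\eta$). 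This localization follows from Proposition \ref{p: lower tail estimates} together with its upper-tail counterpart, applied to the events of an overly early or overly late crossing of level $u_{k-1}$. I would then set
\[
X_k := \sup_{s \in W_k} \bigl[g_k(s) + U_k(s)\bigr],
\]
where $g_k(s)$ is a deterministic function matching $\E[L_k(s)] = 2\sqrt{s u_{k-1}}$ to a suitable order near $s_k^{\star}$. Since $W_k$ and $g_k$ are deterministic and $U_k$ is $\sigma(B_{u_{k-1}+1}, \dots, B_{u_k})$-measurable, each $X_k$ is measurable with respect to a $\sigma$-algebra disjoint across $k$, so $\{X_k\}_{k \ge 1}$ is a mutually independent sequence.

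On the localization event, $|X_k - D_{1, u_k}| \le \sup_{s \in W_k} |L_k(s) - g_k(s)|$. I would control this supremum via the local Brownian regularity of $s \mapsto L_k(s)$ near $s_k^{\star}$: pathwise, $L_k(s_2) - L_k(s_1) \ge B_{u_{k-1}}(s_2) - B_{u_{k-1}}(s_1)$ by extending the optimal path, and a matching upper bound combined with the local Brownian behavior of the centered Airy-type fluctuation process yields that $L_k(s) - g_k(s)$ has variation of order $\sqrt{\ell_k} \sim k^{-(5+\varepsilon_1)/6 + \eta/2}$ on $W_k$, strictly smaller than the target $\varepsilon_2 u_k^{-1/6} = \varepsilon_2 k^{-(3+\varepsilon_1)/6}$ provided $\eta < 2/3$ and $k$ is sufficiently large. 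The main obstacle will be establishing this local regularity with the stretched-exponential tail $1 - \exp(-k^{\delta'})$ required by the proposition. I would approach this by a chaining argument built on Proposition \ref{p: lower tail estimates} applied at a dyadic family of scales to bound the modulus of continuity of $L_k$, supplemented by transversal-fluctuation bounds for the auxiliary geodesics that define $L_k(s)$; alternatively, known modulus-of-continuity estimates for the parabolic Airy line ensemble can be transferred to the $L_k$-process via Baryshnikov's identity.
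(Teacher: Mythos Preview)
There is a genuine gap. Your bound $|X_k - D_{1,u_k}| \le \sup_{s\in W_k}|L_k(s)-g_k(s)|$ is correct, but you then control only the \emph{oscillation} of $L_k(\cdot)-g_k(\cdot)$ on $W_k$ (of order $\sqrt{\ell_k}$), not the supremum of its absolute value. The baseline term $L_k(s_k^\star)-g_k(s_k^\star)=D_{(0,1)\to(s_k^\star,u_{k-1})}-2\sqrt{s_k^\star u_{k-1}}$ is a centred last passage time whose typical size, by Brownian scaling and Proposition~\ref{p:BLPP}, is $\sqrt{s_k^\star}\,u_{k-1}^{-1/6}\sim u_k^{-1/6}$. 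This is exactly the target scale, not $o(u_k^{-1/6})$, so you cannot make $\sup_{s\in W_k}|L_k(s)-g_k(s)|\le \varepsilon_2 u_k^{-1/6}$ with high probability for arbitrary $\varepsilon_2>0$. The approach of splitting at level $u_{k-1}$ and replacing the entire lower segment by its deterministic mean therefore fails: the discarded randomness carries a Tracy--Widom fluctuation of the same order as the quantity you are trying to approximate.

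This is not a fixable detail within your framework. For $\{X_k\}$ to be independent while using only a level-based partition, $X_k$ must be measurable with respect to $(B_i)_{i>u_{k-1}}$; but for the replaced initial segment to have fluctuation $o(u_k^{-1/6})$, the cut level $m_k$ must satisfy $m_k=o(u_k)$, which is incompatible with $m_k\ge u_{k-1}$ since $u_{k-1}/u_k\to 1$. The paper resolves this by partitioning in the two-dimensional space $\mathbb{R}_+\times\mathbb{N}$ rather than by Brownian-motion index alone: it cuts the geodesic at the early \emph{time} $t=n^{-\varepsilon_1/100}$, where the geodesic sits at height $\approx n^{1-\varepsilon_1/100}$, and defines $X_k$ via passage times restricted to a thin parallelogram $P_{u_k}$ around the second segment. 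The initial segment from $(0,1)$ to the cut interval $J_n$ then has fluctuation of order $n^{-1/6-\varepsilon_1/300}\ll \varepsilon_2 n^{-1/6}$, while the parallelograms $P_{u_k}$ are pairwise disjoint in $\mathbb{R}_+\times\mathbb{N}$ (using both coordinates), yielding independence. The key geometric point you are missing is that disjointness can be achieved through the time coordinate even when the index ranges overlap heavily.
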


We can now complete the proof of Theorem \ref{t:LPP} using the estimates above. 

\begin{proof}[Proof of Theorem \ref{t:LPP}]
\textbf{Lower Bound:} If we take\footnote{Throughout our discussion we shall omit the floor/ceiling functions to avoid notational overload. Reader can easily check that this does not affect our arguments in any non-trivial way.} $n_k=k^{3-9\vep'}$ then for all large enough $k$ (depending on $\vep'$), we have $\l(k+1\r)^{3-9\vep'}\leq n_k+n_k^{\frac{2}{3}-\vep'}$. Using this fact it follows that one can choose $\vep'$ small enough (dependence on $\vep$ occurs here) along with $n_{\vep,\vep'}$ and apply Proposition \ref{lem: liminf lower bound} (by taking $x=-(4+\vep)^{1/3} (\log n_k)^{1/3}$), to obtain that for some $\delta>0$ for all $k,n$ sufficiently large as above
\[
\P \left(\min_{n_k \leq i \leq n_{k+1}}  \frac{\left({D_{1,i}-2\sqrt{i}}\right){i^{1/6}}}{(\log i)^{1/3}} \leq -(4+\vep)^{1/3}\right) \leq \frac{1}{k^{1+\delta}}
\]
Hence, by Borel-Cantelli lemma almost surely
 \begin{align}
\liminf_{n\rightarrow\infty}\frac{\left({D_{1,n}-2\sqrt{n}}\right){n^{1/6}}}{(\log n)^{1/3}} \geq -(4+\vep)^{1/3}.
\end{align}
The lower bound follows as $\vep$ was arbitrary. 

\medskip
\noindent
\textbf{Upper Bound:}
Notice that by \eqref{e:approx} and the Borel-Cantelli lemma it suffices to show that 
$$\liminf_{k\to \infty} \dfrac{u_k^{1/6}\l(X_{k}-2\sqrt{u_k}\r)}{(\log u_k)^{1/3}}\le -4^{1/3}$$
almost surely. 

For $\varepsilon>0$, Proposition \ref{p: lower tail estimates} implies that for $\varepsilon_1$ sufficiently small (depending on $\vep$), there is $\vep'$ depending on $\vep$ such that
$$\P\left(\dfrac{u_k^{1/6}\l(D_{1,u_k}-2\sqrt{u_k}\r)}{(\log u_k)^{1/3}}\le -(4-\varepsilon)^{1/3}\right) \geq \frac{1}{k^{1-\vep'}}$$
for all $k$ sufficiently large (depending on $\vep,\vep_1$). It follows from \eqref{e:approx} that for $k$ sufficiently large as above, 
$$\P\left(\dfrac{u_k^{1/6}(X_k-2\sqrt{u_k})}{(\log u_k)^{1/3}}\le -(4-2\varepsilon)^{1/3}\right) \ge \frac{1}{2k^{1-\varepsilon'}}$$
for all $k$ sufficiently large. The second Borel-Cantelli lemma now implies that  
$$\liminf_{k\to \infty} \dfrac{u_k^{1/6}\l(X_{k}-2\sqrt{u_k}\r)}{(\log u_k)^{1/3}}\le -(4-2\varepsilon)^{1/3}$$
almost surely. As $\varepsilon>0$ is arbitrary, we are done.
\end{proof}

The proof of Theorem \ref{t:LPPupper} follows from similar arguments; apart from Proposition \ref{p:decorr}, we need the following two results which are counterparts of Propositions \ref{p: lower tail estimates} and \ref{lem: liminf lower bound} respectively. 

\begin{proposition}\cite[Theorem 1.1, Theorem 1.3]{BBBK24}
\label{p: upper tail estimates}For any $\vep>0$ there exist $n_{\vep}, \gamma_{\vep}$ and $x_{\vep}$ such that for all $n \geq n_{\vep}$ and $x_{\vep} \leq x \leq \gamma_{\vep}n^{2/3}$ the following hold
\[
\exp \left(-\frac{4}{3}(1+\vep) x^{3/2}\right) \leq \P \left(\left(D_{1,n}-2 \sqrt{n}\right)n^{1/6} \geq x \right) \leq \exp \left( -\frac{4}{3}(1-\vep)x^{3/2}\right).
\]
\end{proposition}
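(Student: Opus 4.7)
The plan is to transfer the question to a calculation about the largest eigenvalue $\lambda_n$ of an $n\times n$ GUE matrix via Baryshnikov's identity (Proposition \ref{p:BLPP}), and then extract the sharp upper tail asymptotics from the determinantal structure of the GUE edge. The starting point is the Fredholm determinant identity
\begin{equation*}
\P(\lambda_n \leq t) = \det(I - K_n \chi_{[t,\infty)}),
\end{equation*}
where $K_n(x,y) = \sum_{k=0}^{n-1} \psi_k(x)\psi_k(y)$ is the Hermite kernel built from the normalised oscillator wave-functions.

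For the upper bound on $\P(\lambda_n \geq 2\sqrt{n} + x n^{-1/6})$, the first move is the one-point Markov-type inequality $\P(\lambda_n \geq t) \leq \E[\#\{i:\lambda_i \geq t\}] = \int_t^\infty K_n(s,s)\,ds$. I would then feed in uniform Plancherel--Rotach asymptotics of $\psi_{n-1}, \psi_n$ in the edge-scaling regime. At $t = 2\sqrt{n} + x n^{-1/6}$, the Christoffel--Darboux identity plus the Airy-type approximation of the Hermite functions yields pointwise bounds $K_n(s,s) \lesssim \exp(-\tfrac{4}{3}(1-\vep) y^{3/2})$ at height $y = (s-2\sqrt{n})n^{1/6}$, and integrating against $ds = n^{-1/6}\,dy$ gives the claimed bound uniformly in $x_\vep \leq x \leq \gamma_\vep n^{2/3}$. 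The upper endpoint is precisely where the Airy regime breaks down and the effective saddle point of the Plancherel--Rotach contour integrals approaches the bulk.

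For the matching lower bound on the probability, the first-moment inequality is in fact essentially tight in this window, because the events $\{\lambda_i \geq t\}$ are nearly disjoint in the upper tail regime: with overwhelming probability at most one eigenvalue is anomalously large. Making this precise uses the inclusion--exclusion lower bound
\begin{equation*}
\P(\lambda_n \geq t) \geq \int_t^\infty K_n(s,s)\,ds - \frac{1}{2}\iint_{[t,\infty)^2}\bigl(K_n(s,s)K_n(u,u) - K_n(s,u)^2\bigr)\,ds\,du,
\end{equation*}
and showing, again via Plancherel--Rotach asymptotics, that the correction is a higher power of $\exp(-\tfrac{4}{3}x^{3/2})$ and hence negligible once $x \geq x_\vep$. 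Alternatively, one can combine the known convergence of $n^{1/6}(\lambda_n-2\sqrt{n})$ to $F_2$ with the Deift--Zhou asymptotics $1 - F_2(x) = \exp(-\tfrac{4}{3}x^{3/2}(1+o(1)))$ to cover the moderate range, and use a rate-function calculation based on the semicircle equilibrium measure $J(a) = \int_2^a \sqrt{s^2 - 4}\,ds$ (whose Taylor expansion at $a = 2$ gives exactly $\tfrac{4}{3}$) for the large-deviation end of the window.

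The principal obstacle is uniformity of the constant $\tfrac{4}{3}$ across the full window $x_\vep \leq x \leq \gamma_\vep n^{2/3}$. The Airy approximation is sharp for $x$ of order one with effective error as $x$ grows, while the variational large-deviation bound is sharp for $x$ of order $n^{2/3}$ but loses control near the edge. Stitching these two regimes at an intermediate scale, while propagating both upper and lower bounds with matching $(1\pm\vep)$ constants, is the delicate step, and it requires careful tracking of the Plancherel--Rotach error terms together with quantitative rigidity estimates for the top eigenvalue away from the bulk.
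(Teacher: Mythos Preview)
The paper does not prove this proposition at all: it is quoted verbatim as \cite[Theorem 1.1, Theorem 1.3]{BBBK24} and used as a black-box input, exactly like Proposition~\ref{p: lower tail estimates}. So there is no ``paper's own proof'' to compare against; the intended response here is simply to invoke the cited reference.

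That said, your sketch is a plausible outline of how such a result is established in the literature (and indeed is close in spirit to what \cite{BBBK24} and earlier works such as \cite{LR10,L07} do). The first-moment/one-point bound for the upper estimate and the two-point inclusion--exclusion for the lower estimate are the right determinantal tools, and you correctly identify the crux: uniformity of the constant $\tfrac{4}{3}$ across the whole moderate-deviation window $x_\vep \le x \le \gamma_\vep n^{2/3}$, which requires matching the Airy/Plancherel--Rotach regime to the large-deviation rate-function regime. What you have written, however, is a plan rather than a proof: each of the steps (uniform Plancherel--Rotach bounds with explicit error control in $x$, the quantitative negligibility of the second-moment correction throughout the window, and the stitching at intermediate scales) is a nontrivial technical argument that you have only named, not carried out. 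For the purposes of the present paper none of that work is needed---the result is imported---but if you were asked to supply a self-contained proof you would have to actually execute those estimates, and the details are substantial.
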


\begin{proposition}\label{lemma: maximum of point to interval estimates}
    For any $\vep, \vep'>0$ small enough, there exist $n_{\vep,\vep'},x_\vep$ such that for all $n \geq n_{\vep,\vep'}$ and $x_{\vep} \leq x \leq n^{1/20} $
    \[
    \P \left(\max_{n \leq j \leq n+n^{\frac{2}{3}-\vep'}}\left(D_{1,j}-2 \sqrt{j}\right)j^{1/6} \geq x \right) \leq e^{-\frac{4}{3}(1-\vep)x^{3/2}}.
    \]
\end{proposition}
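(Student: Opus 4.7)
The plan is to run the same pipeline as Proposition \ref{lem: liminf lower bound}, \emph{mutatis mutandis} for the upper tail: reduce the maximum over the interval to a single-point tail and check that the fluctuation across the interval is negligible. Setting $m := n + \lfloor n^{2/3 - \vep'} \rfloor$, the key input is that the interval has length $n^{2/3-\vep'}$, well below the KPZ transversal scale $n^{2/3}$, so the rescaled passage time $F(j) := (D_{1, j} - 2\sqrt{j})\, j^{1/6}$ should fluctuate by an amount that is negligible compared to $x \in [x_\vep, n^{1/20}]$. First, I would write
\[
\Bigl\{\max_{j \in [n, m]} F(j) \geq x\Bigr\} \subseteq \mathcal{E}_1 \cup \mathcal{E}_2,
\]
with $\mathcal{E}_1 := \{F(m) \geq (1-\vep)x\}$ and $\mathcal{E}_2 := \{\max_{j \in [n, m]}\,(F(j) - F(m)) \geq \vep x\}$. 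Proposition \ref{p: upper tail estimates} applied at index $m$ (noting $m \sim n$) immediately yields $\P(\mathcal{E}_1) \leq \exp(-\tfrac{4}{3}(1 - O(\vep))\,x^{3/2})$, of the desired form.

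The heart of the argument is controlling $\P(\mathcal{E}_2)$. A direct computation gives
\[
F(j) - F(m) = -\bigl(D_{1, m} - D_{1, j} - 2(\sqrt{m} - \sqrt{j})\bigr) \, m^{1/6} + (D_{1, j} - 2\sqrt{j})(j^{1/6} - m^{1/6}),
\]
where the second term is of order $n^{-1/3 - \vep'}$ and is thus negligible in our range of $x$. The centered BLPP increment $Y_j := D_{1, m} - D_{1, j} - 2(\sqrt{m} - \sqrt{j})$ is an Airy\textsubscript{2}-scale quantity on a sub-KPZ window $|s_m - s_j| = O(n^{-\vep'})$ in rescaled coordinates. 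Finite-$n$ modulus-of-continuity estimates for BLPP on this scale---derived by combining geodesic monotonicity, Brownian scale invariance applied to the shorter point-to-point passage times $D_{(\tau, j+1)\to(1, m)}$, and the sharp tail bounds of Proposition \ref{p: upper tail estimates} at the shorter scale $|m-j| \leq n^{2/3-\vep'}$---should give a Gaussian-type tail $\P(Y_j \cdot m^{1/6} \geq \vep x) \leq \exp(-c\,\vep^2\, x^2\, n^{\vep'})$ in the moderate-$x$ regime we care about. For $x \in [x_\vep, n^{1/20}]$ and $n$ large (depending on $\vep, \vep'$), this tail dominates the target $\exp(-\tfrac{4}{3}(1-\vep)\,x^{3/2})$, as a direct comparison of exponents $c\vep^2 x^2 n^{\vep'}$ vs.\ $\tfrac{4}{3}x^{3/2}$ confirms whenever $x^{1/2} \geq C(\vep) n^{-\vep'}$. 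A discrete union bound over integer $j \in [n, m]$ loses only a polynomial factor in $n$, easily absorbed into the sub-exponential slack between the fluctuation tail and the target.

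The main technical obstacle is the modulus-of-continuity bound for $Y_j$: what is needed is an Airy-like Gaussian-type tail strong enough to beat the exponent $\tfrac{4}{3}(1-\vep)\, x^{3/2}$ uniformly in $x \in [x_\vep, n^{1/20}]$. This is the precise upper-tail analogue of the technical step behind Proposition \ref{lem: liminf lower bound}, and I expect the same BLPP geodesic-geometric toolkit (monotonicity in the endpoint, planted-path comparisons between geodesics with nearby endpoints, and Brownian scaling of local passage times) to carry the argument through with only cosmetic changes to adapt from the $D_{1,j} \geq D_{1,n}$ direction used for the minimum to the $D_{1,j} \leq D_{1,m}$ direction relevant here.
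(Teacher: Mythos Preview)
Your decomposition $\mathcal{E}_1 \cup \mathcal{E}_2$ is natural, but the heart of the matter---the modulus-of-continuity bound on $Y_j = D_{1,m} - D_{1,j} - 2(\sqrt{m}-\sqrt{j})$---is asserted rather than proved, and this is where the real difficulty lies. The increment $D_{1,m}-D_{1,j}$ is \emph{not} a point-to-point passage time: both quantities start at $(0,1)$ and end on the same vertical line, so no single-point tail bound applies directly. Your suggestion to control it via $D_{(\tau,j+1)\to(1,m)}$ does yield an inequality, but only of the form
\[
D_{1,m}-D_{1,j} \ \geq\ D_{(\tau,j)\to(1,m)} - \bigl(B_j(1)-B_j(\tau)\bigr)
\]
on the event that the geodesic $\Gamma_{1,j}$ has already reached level $j$ by time $\tau$. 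The catch is that this event forces $1-\tau$ to be at most the (random) length of the level-$j$ interval of $\Gamma_{1,j}$, which is only $O(n^{-1/3})$ with no useful lower bound; choosing $\tau$ deterministically runs into either a bad event of non-negligible probability or a short segment whose centering is wrong. Note also two slips: since $F(j)-F(m)\approx -Y_j m^{1/6}$, the event you must bound is $\{Y_j m^{1/6}\le -\vep x\}$, not $\{Y_j m^{1/6}\ge \vep x\}$; and correspondingly the relevant tail for the short segment is the \emph{lower} tail (Proposition~\ref{p: lower tail estimates}), not Proposition~\ref{p: upper tail estimates}. Finally, the paper's proof of Proposition~\ref{lem: liminf lower bound} does not use an increment/modulus argument either, so the expectation of ``only cosmetic changes'' is misplaced.

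The paper sidesteps all of this by planting a point $v=(n+n^{1-\vep'},\,n+n^{1-\vep'})$ strictly \emph{beyond} the interval (after scaling to work with $D_{n,j}$) and using super-additivity $D_{n+n^{1-\vep'},n+n^{1-\vep'}} \geq D_{n,j} + D_{(n,j)\to v}$. The event $\{\max_j D_{n,j}\ \text{large}\}$ is then contained in $\{D_v\ \text{large}\}\cup\{\min_j D_{(n,j)\to v}\ \text{small}\}$. The first is a single-point upper tail at scale $\sim n$; the second is a union over $j$ of single-point \emph{lower} tails for genuine passage times of size $\sim n^{1-\vep'}$, each of which lives at a strictly smaller fluctuation scale $n^{(1-\vep')/3}$ and therefore has tail $\exp(-c_\vep x^3 n^{\vep'})$, easily absorbing the union bound. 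The point is that $D_{(n,j)\to v}$ is a bona fide passage time over a fixed rectangle, with no random $\tau$ to worry about---this is exactly what your approach lacks.
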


We remark that a stronger estimate than Proposition \ref{p: upper tail estimates} for a smaller range of $x$ was proved in \cite[Lemma 7.3]{PZ17} (also see \cite{LR10, L07,BX23}). 

Using these results we can now quickly complete the proof of Theorem \ref{t:LPPupper}. 

\begin{proof}[Proof of Theorem \ref{t:LPPupper}]
\textbf{Lower Bound:} As in the proof of the upper bound in Theorem \ref{t:LPP}, by \eqref{e:approx} and the Borel-Cantelli lemma it suffices to show that 
$$\limsup_{k\to \infty} \dfrac{u_k^{1/6}\l(X_{k}-2\sqrt{u_k}\r)}{(\log u_k)^{2/3}}\ge \l(\frac 14 \r)^{2/3}$$
almost surely. 

For $\varepsilon>0$, Proposition \ref{p: upper tail estimates} implies that for $\varepsilon_1$ (in the definition of $u_k$) sufficiently small (depending on $\vep$), there is some $\vep'$ depending on $\vep$ such that 
$$\P\left(\dfrac{u_k^{1/6}\l(D_{1,u_k}-2\sqrt{u_k}\r)}{(\log u_k)^{2/3}}\ge \l(\frac{1}{4}-\varepsilon\r)^{2/3}\right) \ge \frac{1}{k^{1-\vep'}}$$
for all $k$ sufficiently large (depending on $\vep,\vep_1$). It follows from \eqref{e:approx} that
$$\P\left(\dfrac{u_k^{1/6}(X_k-2\sqrt{u_k})}{(\log u_k)^{2/3}}\ge \l(\frac{1}{4}-2\varepsilon\r)^{2/3}\right) \ge \frac{1}{2k^{1-\vep'}}$$
for all $k$ sufficiently large.

The second Borel-Cantelli lemma now implies that  
$$\limsup_{k\to \infty} \dfrac{u_k^{1/6}\l(X_{k}-2\sqrt{u_k}\r)}{(\log u_k)^{2/3}}\ge \l(\frac{1}{4}-2\varepsilon\r)^{2/3}$$
almost surely. As $\varepsilon>0$ is arbitrary, we are done. 

\medskip
\noindent
\textbf{Upper Bound:} If we take $n_k=k^{3-9\vep'}$ then for all large enough $k$ (depending on $\vep'$), we have $\l(k+1\r)^{3-9\vep'}\leq n_k+n_k^{\frac{2}{3}-\vep'}$. Using this fact it follows that one can choose $\vep'$ small enough (dependence on $\vep$ occurs here) along with $k_{\vep,\vep'}$ and apply Proposition \ref{lemma: maximum of point to interval 
 estimates} (take $x=(\frac{1}{4}+\vep)^{2/3}(\log n_k)^{2/3}$), Borel-Cantelli lemma to obtain that almost surely
 \begin{align}
\limsup_{n\rightarrow\infty}\frac{\left({D_{1,n}-2\sqrt{n}}\right){n^{1/6}}}{(\log n)^{2/3}}<\l(\frac{1}{4}+\vep\r)^{2/3}
\end{align}
and the proof is completed as before. 
\end{proof}

\subsection{Law of fractional logarithms for the Plancherel growth process}
\label{s:lis}
We now describe our result about \emph{longest increasing subsequences} in a growing random permutation, answering the second question of Kalai \cite{Kalai_question}. We start by describing a construction of growing random permutations that corresponds to the \emph{Plancherel growth process}. 

For distinct $x_1,x_2,\ldots, x_n\in [0,1]$ let 
$x_{(1)}<x_{(2)}<\cdots < x_{(n)}$ denote the order statistics, i.e., $x_i$s reordered in the increasing order. Now, let $\sigma(x_1,x_2,\ldots, x_n)\in S_n$ denote the permutation of length $n$ such that $\sigma(i)=j$ where $j$ is such that $x_{i}=x_{(j)}$, i.e., $j$ is the rank of $x_i$ in the increasing order. We have the following standard result. 

\begin{lemma}
    \label{l:permutation}
    Let $U_1,U_2,\ldots$ be a sequence of i.i.d.\ random variables distributed uniformly on $[0,1]$. Let $\sigma_n=\sigma(U_1,U_2,\ldots, U_n)$. Then the sequence of random permutations $\{\sigma_n\}_{n\ge 1}$ is a Markov chain with uniform marginals on $S_n$ starting from $\sigma_1=(1)$ (the identity permutation of size $1$) and the conditional distribution of $\sigma_{n+1}$ given $\sigma_n$ is given as follows: 
    \begin{equation}
    \label{eq:1}\P(\sigma_{n+1}(n+1)=j\mid \sigma_n)=\frac{1}{n+1}\end{equation} for $j=1,2,\ldots, n+1$ and for $j\le n$, conditional on $\sigma_n$ and $\sigma_{n+1}(n+1)$ we have
\[
\sigma_{n+1}(j)= \begin{cases}
    \sigma_n(j) \quad \text{if } \sigma_{n+1}(n+1)> \sigma_n(j),\\
    \sigma_n(j)+1 \quad \text{otherwise}.
\end{cases}
\]
\end{lemma}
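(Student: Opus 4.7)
The claim has two independent-looking components: uniform marginals, and the specific Markov transition rule. Both are routine consequences of the exchangeability (in fact, i.i.d.-ness) of $U_1,U_2,\ldots$, together with the fact that almost surely no two of the $U_i$'s coincide, so the rank map $\sigma$ is well-defined. The plan is to verify uniform marginals first, then identify $\sigma_{n+1}$ as a deterministic function of $\sigma_n$ and the rank $R_{n+1}$ of $U_{n+1}$ among $U_1,\ldots,U_{n+1}$, and finally show that $R_{n+1}$ is uniform on $\{1,\ldots,n+1\}$ independently of $\sigma_n$.

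For the uniform marginals, I would note that for any $\tau \in S_n$, the event $\{\sigma_n=\tau\}$ coincides with the event that $U_{\tau^{-1}(1)}<U_{\tau^{-1}(2)}<\cdots<U_{\tau^{-1}(n)}$. Since the $U_i$ are exchangeable and almost surely distinct, each of the $n!$ strict orderings of $(U_1,\ldots,U_n)$ occurs with probability $1/n!$. Hence $\P(\sigma_n=\tau)=1/n!$ for every $\tau \in S_n$. The starting condition $\sigma_1=(1)$ is trivial.

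For the transition, the key is the following deterministic observation. Writing $R_{n+1}:=\sigma_{n+1}(n+1)$ for the rank of $U_{n+1}$ among $U_1,\ldots,U_{n+1}$, note that for $i\le n$ the rank of $U_i$ among the enlarged list is exactly its rank among $U_1,\ldots,U_n$ if $U_{n+1}>U_i$, and increases by one if $U_{n+1}<U_i$. Since $U_{n+1}>U_i$ is equivalent to $R_{n+1}>\sigma_{n+1}(i)$, and therefore (after a short unwinding) to $R_{n+1}>\sigma_n(i)$, this gives exactly the update rule stated in \eqref{eq:1} and the two cases that follow. In particular $\sigma_{n+1}$ is a measurable function of $(\sigma_n,R_{n+1})$, which is the structural content of the Markov property once the joint law of $(\sigma_n,R_{n+1})$ is pinned down.

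Finally, I would compute that joint law directly from exchangeability. For any $\tau\in S_n$ and $j\in\{1,\ldots,n+1\}$, the event $\{\sigma_n=\tau,\,R_{n+1}=j\}$ singles out one specific strict ordering of $(U_1,\ldots,U_{n+1})$, and hence has probability $1/(n+1)!$. Dividing by $\P(\sigma_n=\tau)=1/n!$ yields $\P(R_{n+1}=j\mid \sigma_n=\tau)=1/(n+1)$, uniformly in $\tau$; this is both the uniform distribution of $R_{n+1}$ and its (conditional) independence from $\sigma_n$, which together give the Markov property with the stated transition. There is no real obstacle here—the only mild care is the almost-sure absence of ties, which is immediate from absolute continuity of the uniform law on $[0,1]$.
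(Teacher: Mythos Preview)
Your proof is correct and follows essentially the same route as the paper: both establish uniform marginals via exchangeability, and both compute $\P(\sigma_n=\tau,\,\sigma_{n+1}(n+1)=j)=1/(n+1)!$ to obtain \eqref{eq:1}. The only cosmetic difference is that the paper argues the Markov property by noting $\sigma_{n-1}$ is recoverable from $\sigma_n$ (so conditioning on $\sigma_n$ equals conditioning on the full past), whereas you argue forward by expressing $\sigma_{n+1}$ as a deterministic function of $(\sigma_n,R_{n+1})$; these are two sides of the same observation.
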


\begin{proof}

Given $\sigma_{n}$, we can recover $\sigma_{n-1}$. Indeed, for $i<n$ such that $\sigma_n(i)>\sigma_n(n)$, we have $\sigma_{n-1}(i)=\sigma_{n}(i)-1$. Else $\sigma_{n-1}(i)=\sigma_{n}(i)$. This shows that $\{\sigma_n\}_{n\geq 1}$ is Markov chain. This also gives the last part of the lemma, that is, given any $\sigma_n$ and $\sigma_{n+1}(n+1)$ we can recover $\sigma_{n+1}$. Also note that for any $n$, the distribution of $\sigma_n$ is uniformly distributed in $S_n$. Hence for any $\tau_n\in S_n$, we have that $\P\l(\sigma_{n+1}(n+1)=j \mbox{ and } \sigma_n=\tau_n\r)=\frac{1}{(n+1)!}$. This shows \eqref{eq:1}.
\end{proof}
 By the RSK correspondence, each $\sigma_n$ has an associated random Young diagram $\mu_n$, where $\mu_n=(\mu_n^1,\mu_n^2,\ldots)$ is an integer partition of $n$ with $\mu_n^{i}\ge \mu_n^{(i+1)}$ for all $i$ and $\sum_{i} \mu_n^i=n$ (here $\mu_n$ is the common shape of the two Young tableaux to which $\sigma_n$ maps via the RSK correspondence). As $\sigma_n$ has uniform distribution, the marginal distribution of $\mu_n$ is the Plancherel measure on Young diagrams with $n$ boxes. This process of growing random Young diagrams $\{\mu_n\}_{n\ge 1}$ is known as the \emph{Plancherel growth process} \cite{VerKer77}.

Recall that a subsequence $1\le i_1<i_2<\cdots <i_k \le n$ is called an increasing subsequence of $\sigma\in S_n$ if $\sigma(i_1)<\sigma(i_2)<\cdots <\sigma(i_k)$. It is a well known result that $L_n$, the length of the longest increasing subsequence of $\sigma_n$ equals $\mu_n^1$, the length of the top row of $\mu_n$. The famous Ulam's problem \cite{ulam1961monte} asks about asymptotics of $L_n$. We refer the reader to \cite{Romik_2015} for more details on these classical developments.

As already mentioned, $L_n$, properly centered and scaled, converges weakly to the GUE Tracy-Widom disctribution. Our final main result establishes a law of fractional logarithm (similar to Theorems \ref{thm: liminf constant} and \ref{t:LPPupper}) for $L_n$ and answers the corresponding question of Kalai \cite{Kalai_question}. 

\begin{theorem}
\label{t: LFL for LIS}
We have, for $L_n$ as above
\begin{enumerate}[label=(\roman*), font=\normalfont]
   \item 
   \[\limsup_{n \rightarrow \infty} \frac{L_{n}-2\sqrt{n}}{n^{1/6}(\log n)^{2/3}}=\l( \frac 18 \r)^{2/3} \text{ a.s. }\]
   \item \[\liminf_{n \rightarrow \infty} \frac{L_{n}-2\sqrt{n}}{n^{1/6}(\log n)^{1/3}}=-2^{1/3} \text{ a.s. } \]   
\end{enumerate}
\end{theorem}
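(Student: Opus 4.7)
The plan is to mirror the framework developed for the GUE minor process, replacing Brownian last passage percolation with a Hammersley-type last-passage representation of $L_n$. Extending the coupling of Lemma \ref{l:permutation} to a rate-one Poisson point process $\mathcal{P}$ on $\R_{\geq 0}^2$, the length $L_n$ is realized (up to a standard de-Poissonization) by $H(\sqrt n,\sqrt n)$, where $H(s,t)$ denotes the length of a longest up-right chain in $\mathcal{P}\cap[0,s]\times[0,t]$ from $(0,0)$ to $(s,t)$. This provides a monotone coupling of $\{L_n\}_{n\geq 1}$ through a single random environment and makes geodesic/geometric arguments available. The three ingredients required, paralleling Propositions \ref{p: lower tail estimates}--\ref{p: upper tail estimates}, \ref{lem: liminf lower bound}--\ref{lemma: maximum of point to interval estimates}, and \ref{p:decorr}, are (a) sharp tails for $\widetilde L_n:=(L_n-2\sqrt n)n^{-1/6}$, (b) a strong-correlation estimate over the appropriate decorrelation scale, and (c) a coupling to independent random variables along a rapidly growing subsequence.

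For (a), in a moderate-deviation range $x_\vep\leq x\leq n^{1/20}$ we expect
\begin{align*}
\exp\l(-\tfrac{4}{3}(1+\vep)x^{3/2}\r)&\leq \P(\widetilde L_n\geq x)\leq \exp\l(-\tfrac{4}{3}(1-\vep)x^{3/2}\r),\\
\exp\l(-\tfrac{1}{12}(1+\vep)x^3\r)&\leq \P(\widetilde L_n\leq -x)\leq \exp\l(-\tfrac{1}{12}(1-\vep)x^3\r),
\end{align*}
to be obtained by transferring corresponding Poissonized-Plancherel/Hammersley LPP bounds (from \cite{BBBK24} and adaptations of its methods) through de-Poissonization.

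The decorrelation scale is the step that substantively differs from the GUE case. At Hammersley time $t$ the transverse fluctuations of geodesics are of order $t^{2/3}$, and since $L_n\approx H(\sqrt n,\sqrt n)$, the endpoints $(\sqrt n,\sqrt n)$ and $(\sqrt{n+k},\sqrt{n+k})$ remain strongly correlated only while their separation $k/(2\sqrt n)$ stays below $n^{1/3}$; the decorrelation scale in the index $n$ is therefore $n^{5/6}$ rather than $n^{2/3}$. Accordingly, ingredient (b) will take the form
\[
\P\l(\max_{n\leq i\leq n+n^{5/6-\vep'}}\widetilde L_i\geq x\r)\leq e^{-\frac{4}{3}(1-\vep)x^{3/2}},\ \P\l(\min_{n\leq i\leq n+n^{5/6-\vep'}}\widetilde L_i\leq -x\r)\leq e^{-\frac{1}{12}(1-\vep)x^3},
\]
established by combining (a) with geodesic-localization arguments in Hammersley LPP. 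For (c), we set $u_k=\lfloor k^{6+\vep_1}\rfloor$ and define $X_k$ by restricting the Hammersley passage time from $(\sqrt{u_{k-1}},\sqrt{u_{k-1}})$ to $(\sqrt{u_k},\sqrt{u_k})$ to a parallelogram of transverse width $u_k^{1/3+\vep_2}\ll \sqrt{u_{k+1}}-\sqrt{u_k}$; the parallelograms for distinct $k$ are then disjoint, rendering the $X_k$ independent, while transverse-fluctuation tails for Hammersley geodesics give $\P(|X_k-L_{u_k}|\geq \vep_2 u_k^{-1/6})\leq e^{-k^\delta}$.

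With (a)--(c) in hand, Theorem \ref{t: LFL for LIS} follows exactly as in Theorems \ref{t:LPP} and \ref{t:LPPupper}, but with the subsequence $n_k=k^{6-9\vep'}$, so that $n_{k+1}-n_k\leq n_k^{5/6-\vep'}$. Choosing $x=(\tfrac 18\pm\vep)^{2/3}(\log n_k)^{2/3}$ for part (i) and $x=(2\pm\vep)^{1/3}(\log n_k)^{1/3}$ for part (ii), the tail estimates in (a)--(b) produce per-$k$ probabilities of order $n_k^{-1/6\pm\vep''}\sim k^{-1\mp O(\vep')}$; the first Borel-Cantelli lemma applied to (b) and the second applied to the independent sequence $X_k$ from (c) then deliver the matching upper and lower bounds, yielding the constants $(1/8)^{2/3}$ and $-2^{1/3}$. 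The main technical obstacle I anticipate is the construction in (c): one must control the quality of the geodesic-localization uniformly along the sparse subsequence $u_k=k^{6+\vep_1}$, with failure probability $e^{-k^\delta}$-small, which requires the Hammersley transverse-fluctuation tail to be sharp down to the moderate-deviation scale $x\sim(\log u_k)^{1/3}$ driving the liminf constant.
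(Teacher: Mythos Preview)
Your proposal contains a genuine error in the choice of coupling. You realize $L_n$ via the \emph{diagonal} Hammersley process $H(\sqrt n,\sqrt n)$ in a single Poisson point process on $\R_{\ge 0}^2$, but this does not recover the Plancherel growth coupling of $\{L_n\}_{n\ge 1}$ fixed in Lemma~\ref{l:permutation}. Marginally $L_n\overset{d}{=}H(\sqrt n,\sqrt n)$, but as the square $[0,\sqrt n\,]^2$ grows, new Poisson points enter through an L-shaped boundary in a way that does not correspond to the insertion Markov chain, so de-Poissonization yields a different joint law. The paper is explicit about this distinction: just after Proposition~\ref{l:LFL for sequence} it remarks that moving along the diagonal (i.e.\ considering $\cl_{t,t}$) produces a \emph{law of iterated logarithm}, not a law of fractional logarithm. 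Your heuristic for the $n^{5/6}$ scale is accordingly flawed on its own terms: the transverse fluctuation $t^{2/3}$ governs decorrelation when the endpoint moves \emph{perpendicular} to the geodesic direction, whereas moving from $(\sqrt n,\sqrt n)$ to $(\sqrt{n+k},\sqrt{n+k})$ is a displacement \emph{along} the diagonal, for which decorrelation occurs only when the displacement is comparable to $\sqrt n$ itself, i.e.\ $k\sim n$. Relatedly, your $X_k$ --- a restricted passage time from $(\sqrt{u_{k-1}},\sqrt{u_{k-1}})$ to $(\sqrt{u_k},\sqrt{u_k})$ --- approximates an increment of $H$, not $H(\sqrt{u_k},\sqrt{u_k})$, so the comparison $|X_k-L_{u_k}|\le\vep_2 u_k^{1/6}$ cannot hold as stated.

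The paper instead uses the \emph{strip} realization: a rate-one Poisson process on $[0,1]\times[0,\infty)$ with $\cl_{1,t}$ the last passage time to $(1,t)$. Proposition~\ref{p:PlanchereltoPoisson} shows this matches the Poissonized Plancherel growth process exactly, so de-Poissonization genuinely recovers $\{L_n\}$. In this geometry the geodesic to $(1,n)$ has \emph{vertical} transversal fluctuations of order $n^{5/6}$ (Lemma~\ref{lemma: Poissonian Transversal fluctuation}, obtained from the diagonal $r^{2/3}$ bound via the scaling $\cl_{1,n}\overset{d}{=}\cl_{\sqrt n,\sqrt n}$), and the endpoint $(1,n)$ moves in that same vertical direction, so decorrelation really occurs at scale $n^{5/6}$. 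With this one correction your outline becomes the paper's proof: sharp tails (taken from \cite{LM01,LMS02}), strong-correlation bounds over windows of width $n^{5/6-\vep'}$ (Propositions~\ref{lem: perm liminf lower bound} and~\ref{p: Poissonian limsup upper bound}), and an independent approximation along $u_k=\lfloor k^{6+\vep_1}\rfloor$ built from disjoint parallelograms in the strip (Proposition~\ref{p: Poisson decorr}), followed by the Borel--Cantelli arguments you describe.
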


Notice that the scaling functions in the above theorem are the same as in the case of GUE minor process. This is to be expected as these correspond to the tail exponents in the GUE Tracy-Widom distribution. However, the limiting constants are different, because the decorrelation happens at a different, albeit still sub-linear, scale. 

We shall, once again, translate this question to a question in last passage percolation, this time to the \emph{Poissonian last passage percolation}. The proof of Theorem \ref{t: LFL for LIS} is going to be similar to the proofs for the GUE minor process. 

\medskip 

\noindent
\textbf{Poissonian last passage percolation:}
Consider a rate $1$ Poisson point process $\Pi$ on $\R^2$. For $u,v\in \R^2$ with $u$ co-ordinate wise smaller than $v$, let $\cl_{u,v}$, the last passage time from $u$ to $v$, denote the maximum number of Poisson points on an up/right path from $u$ to $v$ (for concreteness we only allow piecewise linear paths with where each linear piece connects two points in $\Pi$, or a point in $\Pi$ to $u$ or $v$). When $u=(0,0)$ and $v=(a,b)$ we shall denote the corresponding last passage time by $\cl_{a,b}$. 

The connection between the Poissonian LPP and (a Poissonized version of) Ulam's problem is well-known. Let $R$ be a rectangle with $u$ and $v$ being the bottom left and top right corners respectively. One can create a permutation $\Sigma_R$ of size $N_R$ from $\Pi$ restricted to any rectangle $R$, where $N_R$ denotes the number of points in $\Pi \cap R$, by considering the ranks of the $x$-coordinates of points ordered according to increasing $y$ coordinate (as in the definition of $\sigma(x_1,x_2,\ldots,x_n)$ above). It is standard (and easy to see) that conditional on $N_R$, the permutation $\Sigma_R$ is uniform in $S_{N_R}$ and the length of its longest increasing subsequence is $\cl_{u,v}$.

To cleanly state a distributional equality between the random permutations $\{\sigma_n\}$ from Lemma \ref{l:permutation} we shall introduce a continuous time version of the process $\sigma_n$. In the set up of Lemma \ref{l:permutation}, let $T_1,T_2,\ldots$ denote a sequence of i.i.d. $\exp(1)$ random variables independent of the sequence $\{U_i\}$. Let $N(t)$ denote the Poisson process on $\R$ with waiting times $T_i$, i.e.,
$$ N(t)=\max\{n: T_1+T_2+\cdots +T_n\le t\}.$$
Define a continuous time process of growing random permutations by setting 
$$\widehat{\sigma}_t=\sigma_{N(t)}.$$
Let $\widehat{L}_t$ denote the length of the longest increasing subsequence of $\widehat{\sigma}_t$. The next proposition allows us to prove Theorem \ref{t: LFL for LIS} in the language of Poissonian LPP.

\begin{proposition}
    \label{p:PlanchereltoPoisson}
    Let $R_t=[0,1]\times[0,t]$. Then for the permutations $\Sigma_{R_{t}}$ described above, we have
    $$\{\Sigma_{R_t}\}_{t>0}\overset{d}{=} \{\widehat{\sigma}_t\}_{t>0}.$$
    As a consequence, we have
    $$\{\cl_{1,t}\}_{t>0}\overset{d}{=}\{\widehat{L}_t\}_{t>0}.$$
\end{proposition}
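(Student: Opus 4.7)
The plan is to exhibit a single coupling, built from the underlying Poisson point process, under which the two families $\{\Sigma_{R_t}\}_{t>0}$ and $\{\widehat{\sigma}_t\}_{t>0}$ are literally equal, not merely equal in distribution. To that end, consider the full Poisson point process $\Pi$ on $[0,1]\times\R_+$, and enumerate its points in increasing order of the $y$-coordinate as $(X_1,Y_1),(X_2,Y_2),\ldots$. By standard properties of spatial Poisson point processes one has: (i) the counting process $M(t):=|\Pi\cap([0,1]\times[0,t])|$ is a rate $1$ Poisson process on $\R_+$, so that the inter-arrival increments $T_i:=Y_i-Y_{i-1}$ (with $Y_0=0$) are i.i.d.\ $\mathrm{Exp}(1)$; (ii) conditionally on the $y$-coordinates (equivalently on $M(\cdot)$), the $x$-coordinates $X_1,X_2,\ldots$ are i.i.d.\ uniform on $[0,1]$; hence (iii) $(X_i)_{i\geq 1}$ is an i.i.d.\ $\mathrm{Unif}[0,1]$ sequence independent of $M(\cdot)$. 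These three properties are exactly the joint distribution of the $(U_i)$ and $N(\cdot)$ in the construction of $\widehat{\sigma}_t$, so we may identify $U_i=X_i$ and $N(t)=M(t)$.

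Under this identification, for every $t>0$ we have $\Sigma_{R_t}=\sigma(X_1,\ldots,X_{M(t)})=\sigma(U_1,\ldots,U_{N(t)})=\sigma_{N(t)}=\widehat{\sigma}_t$, since in both descriptions the permutation is read off by taking the ranks of the same uniform $x$-coordinates ordered by their $y$-coordinates. This equality holds simultaneously for all $t$ on the coupled probability space, yielding the process-level distributional identity $\{\Sigma_{R_t}\}_{t>0}\stackrel{d}{=}\{\widehat{\sigma}_t\}_{t>0}$. The consequence for last passage times is immediate: as already noted in the text, $\cl_{1,t}$ equals the length of the longest increasing subsequence of $\Sigma_{R_t}$, and by definition $\widehat{L}_t$ equals the length of the longest increasing subsequence of $\widehat{\sigma}_t$; thus the equality of the permutation processes transfers to equality of the LIS-length processes.

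There is no genuine obstacle here beyond a careful bookkeeping of the standard Poisson coupling; the only point demanding care is to verify process-level (rather than fixed-$t$) agreement, and this is settled by the observation above that the identification $U_i=X_i,\ N(t)=M(t)$ is a single coupling valid for all $t$ simultaneously.
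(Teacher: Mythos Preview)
Your proof is correct and follows essentially the same approach as the paper: both arguments rest on the standard fact that a rate $1$ Poisson point process on $[0,1]\times[0,\infty)$ is equivalent in law to the point process $\{(U_i,\,T_1+\cdots+T_i)\}_{i\ge 1}$ with $(U_i)$ i.i.d.\ uniform and $(T_i)$ i.i.d.\ $\mathrm{Exp}(1)$ independent of the $U_i$. The only cosmetic difference is direction---the paper starts from $(U_i,T_i)$ and argues the resulting point process is Poisson, whereas you start from $\Pi$ and read off the $(X_i,Y_i)$---but the coupling and the conclusion are identical.
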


\begin{proof}
   The main observation is that for $U_i$ and $T_i$ as above the point process $\Pi'=\{(U_1,T_1), (U_2,T_1+T_2), (U_3, T_1+T_2+T_3), \ldots\}$ has the same law as a rate $1$ Poisson point process on $[0,1]\times [0,\infty)$. Indeed, one can see this by first noticing that $T_i$s are inter arrival times of a rate $1$ Poisson process and then using Poisson thinning to show that for disjoint rectangles $R'_i$ in $[0,1]\times [0,\infty)$ the number of points in $R'_i\cap \Pi'$ are independent Poisson random variables with parameters equal to the areas of $R'_i$. The first statement in the proposition now follows from the definitions. The second assertion is a consequence of the first one.   
    \end{proof}

We shall prove the following result, which, by Proposition \ref{p:PlanchereltoPoisson} and a simple de-Poissonization argument, will imply Theorem \ref{t: LFL for LIS}.

\begin{proposition}
\label{l:LFL for sequence}
We have 
\begin{enumerate}[label=(\roman*), font=\normalfont] 
    \item 
    \[\limsup_{n \rightarrow \infty} \frac{\cl_{1,n}-2\sqrt{n}}{n^{1/6}(\log n)^{2/3}} = \l( \frac 18 \r)^{2/3} \text{ a.s. }\]
    \item \[\liminf_{n \rightarrow \infty} \frac{\cl_{1,n}-2\sqrt{n}}{n^{1/6}(\log n)^{1/3}} = -\l( 2 \r)^{1/3} \text{ a.s. }
    \]
    \end{enumerate}
    \end{proposition}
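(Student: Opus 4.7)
The plan is to follow the same Borel--Cantelli scheme used for Theorems \ref{t:LPP} and \ref{t:LPPupper}, now applied to $\cl_{1,n}$ in place of $D_{1,n}$. Three ingredients are needed: (i) sharp one-point upper/lower tail bounds of the form $\exp(-\tfrac{4}{3}(1\pm\vep)x^{3/2})$ and $\exp(-\tfrac{1}{12}(1\pm\vep)x^3)$ for $(\cl_{1,n}-2\sqrt n)n^{1/6}$ (with the same $\mathrm{TW}_2$ tail constants as in the Brownian LPP case); (ii) interval max/min estimates over windows of size $n^{5/6-\vep'}$ with these same right-hand sides; and (iii) a decorrelation statement producing independent random variables $X_k$ approximating $\cl_{1,u_k}$ within $\vep_2 u_k^{-1/6}$ up to probability $\exp(-k^\delta)$, with subsequence $u_k=\lfloor k^{6+\vep_1}\rfloor$. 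The arithmetic behind the target constants $(1/8)^{2/3}$ and $-2^{1/3}$ is that the decorrelation scale for $\cl_{1,n}$ is $n^{5/6}$ rather than $n^{2/3}$, so the subsequence exponent $3$ from the Brownian case becomes $6$; the same Borel--Cantelli computation then turns $(1/(3\cdot\tfrac{4}{3}))^{2/3}=(1/4)^{2/3}$ into $(1/(6\cdot\tfrac{4}{3}))^{2/3}=(1/8)^{2/3}$ and $(12/3)^{1/3}=4^{1/3}$ into $(12/6)^{1/3}=2^{1/3}$. The $n^{5/6}$ scale itself is a consequence of the scale invariance $\cl_{1,n}\stackrel{d}{=}\cl_{\sqrt n,\sqrt n}$: in the square picture of side $\sqrt n$, moving the target from $(1,n)$ to $(1,n+k)$ shifts the endpoint by $\sim k/\sqrt n$, which matches the KPZ transversal scale $n^{1/3}$ precisely when $k\sim n^{5/6}$.

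For (i), I would invoke the sharp moderate-deviation tail bounds for Poissonian LPP, which are accessible via the determinantal description of the longest increasing subsequence (Borodin--Okounkov--Olshanski/Johansson) for the upper tail, and for the lower tail by adapting the $\beta=2$ argument of \cite{BBBK24} to the discrete orthogonal polynomial (Charlier/Meixner) ensemble underlying Poissonian LPP. For (ii), the strategy mirrors Propositions \ref{lem: liminf lower bound} and \ref{lemma: maximum of point to interval estimates}: the geodesics to $(1,i)$ for $n\le i\le n+n^{5/6-\vep'}$ share most of their trajectory, so the oscillation of $(\cl_{1,i}-2\sqrt i)i^{1/6}$ over this window is controlled by a single one-point tail bound together with a short top-segment fluctuation estimate, itself obtained from a slab-barrier argument.

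For (iii), I would construct $X_k$ as a last passage time restricted to Poisson points in a slab $[0,1]\times[u_{k-1}+L_k,\,u_k]$ near the top of the rectangle, with an auxiliary deterministic \emph{boundary profile} on the line $y=u_{k-1}+L_k$ chosen to match the leading behaviour of $\cl_{(0,0)\to(\,\cdot\,,u_{k-1}+L_k)}$ in a window of $x$-width $u_k^{-1/6+\alpha}$ around the typical crossing location $x_k\approx(u_{k-1}+L_k)/u_k$. Since the slabs are disjoint across $k$, independence of the $X_k$ is automatic, and the approximation $|X_k-\cl_{1,u_k}|\le\vep_2 u_k^{-1/6}$ with super-polynomial probability reduces to two estimates: super-polynomial localization of the optimal crossing point within the $u_k^{-1/6+\alpha}$ window (a transversal-fluctuation bound), and a quantitative slab-coupling that absorbs the leading-order bottom-slab fluctuation into the error budget.

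The main obstacle is step (iii): obtaining the $\exp(-k^\delta)$ tail in \eqref{e:approx} requires super-polynomial transversal-fluctuation estimates for the Poissonian geodesic in the thin-rectangle geometry, together with a tight slab-coupling argument. Both can in principle be derived from the scale invariance $\cl_{1,n}\stackrel{d}{=}\cl_{\sqrt n,\sqrt n}$ and the corresponding super-polynomial fluctuation and coalescence estimates for square-aspect Poissonian LPP, but making the quantitative adaptation precise in the correct range is the genuine technical content. Once Proposition \ref{l:LFL for sequence} is in hand, Theorem \ref{t: LFL for LIS} follows from Proposition \ref{p:PlanchereltoPoisson} by a standard de-Poissonization: $|N(n)-n|=O(\sqrt{n\log n})$ with high probability, and by monotonicity of $L_n$ together with the mean slope $\tfrac{d}{dn}\E L_n\sim 1/\sqrt n$, this translates into an error of order $\sqrt{\log n}$ on the centred scaled quantity, which is negligible compared to $n^{1/6}(\log n)^{1/3}$.
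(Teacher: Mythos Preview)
Your overall plan is the same as the paper's: sharp one-point tails, interval max/min bounds over windows of size $n^{5/6-\vep'}$, and a decorrelation statement along $u_k=\lfloor k^{6+\vep_1}\rfloor$, assembled via Borel--Cantelli. The identification of the $n^{5/6}$ scale and the resulting constants $(1/8)^{2/3}$ and $-2^{1/3}$ is correct. One minor point: the sharp Poissonian tails are already in the literature (the paper simply quotes \cite{LM01} for the upper tail and \cite{LMS02} for the lower tail), so no adaptation of \cite{BBBK24} is needed.

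The genuine gap is in your step (iii). With your horizontal slab $[0,1]\times[u_{k-1}+L_k,u_k]$ and a \emph{deterministic} boundary profile on the line $y=u_{k-1}+L_k$, the approximation cannot hold to within $\vep_2 u_k^{1/6}$ (you wrote $u_k^{-1/6}$, carrying over the Brownian exponent; the Poissonian fluctuation scale for $\cl_{1,n}$ is $n^{1/6}$). The crossing point on that line is at $x_k\approx(u_{k-1}+L_k)/u_k\approx 1$, so the ``bottom part'' $\cl_{(0,0)\to(x_k,u_{k-1}+L_k)}$ is a last passage time across a region of area $x_k(u_{k-1}+L_k)\asymp u_k$, hence has fluctuations of order $u_k^{1/6}$ about its mean. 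A deterministic $g$ can match only the mean, so $|X_k-\cl_{1,u_k}|$ is itself of order $u_k^{1/6}$, not $\vep_2 u_k^{1/6}$ for arbitrarily small $\vep_2$. Any horizontal slab disjoint from the previous ones must sit near the top and will suffer the same defect.

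The paper resolves this by cutting \emph{near the origin} and using disjoint \emph{parallelograms} rather than horizontal strips. One places a short interval $J_n$ on the vertical line $x=n^{-\vep_1/100}$ and sets $Y_n=\max_{v\in J_n}\bigl(2\sqrt{t_vr_v}+\cl^{P_n}_{v\to(1,n)}\bigr)$, where $P_n$ is a thin parallelogram about the expected geodesic with opposite sides on $x=n^{-\vep_1/100}$ and $x=1$. Now the bottom part $\cl_{(0,0)\to v}$ has area $t_vr_v\asymp n^{1-\vep_1/50}$ and fluctuations $n^{1/6-\vep_1/300}=o(n^{1/6})$, which the deterministic profile $2\sqrt{t_vr_v}$ does absorb. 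The parallelograms $P_{u_k}$ are disjoint because they fan out from near the origin to the well-separated heights $u_k$ along different slopes, yielding independence; the events $\ca,\cb,\cc$ controlling $|Y_n-\cl_{1,n}|$ are then handled exactly as in Lemma~\ref{l:approx}, using the Poissonian transversal-fluctuation bounds (Lemma~\ref{lemma: Poissonian Transversal fluctuation}).
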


It will be clear from the proof that the conclusion of Proposition \ref{l:LFL for sequence} remains true for $\mathcal{L}_{1,t}$ when $t\to \infty$ through reals, not just through integers. However, Proposition \ref{l:LFL for sequence} will be sufficient for us to prove Theorem \ref{t: LFL for LIS}.

We shall now complete the proof of Theorem \ref{t: LFL for LIS} using Proposition \ref{l:LFL for sequence}. 

\begin{proof}[Proof of Theorem \ref{t: LFL for LIS}]
    Consider the set-up of Poissonian LPP as described above. Let $P_n$ be the number of points in the strip $[0,1]\times [0,n]$. By Poisson deviation bounds and the Borel-Cantelli lemma we have that almost surely for all $n\ge n_0$ (for some random $n_0$) $P_{\lfloor n-n^{5/8}\rfloor}\leq n\leq P_{\lceil n+n^{5/8}\rceil}$. This implies that almost surely for large enough $n$ 
\begin{align}
     \label{eq:depossion}\mathcal{L}_{1,\lfloor n-n^{5/8}\rfloor}\leq L_{n}\leq \mathcal{L}_{1,\lceil n+n^{5/8}\rceil}.
\end{align}
Notice that there is nothing special about the exponent ${5/8}$ any number strictly between $1/2$ and $2/3$ will satisfy our purposes. 

We shall now prove the $\limsup$ statement in Theorem \ref{t: LFL for LIS} by using the Proposition \ref{l:LFL for sequence}, (i). The proof of the second statement in Theorem \ref{t: LFL for LIS} is almost identical using Proposition \ref{l:LFL for sequence}, (ii) and is omitted. 

For convenience of notation let us set $n_+=\lceil n+n^{5/8} \rceil$. It follows from \eqref{eq:depossion} that for all $n$ sufficiently large 

$$ \frac{L_n-2\sqrt{n}}{n^{1/6}(\log n)^{2/3}}\le b_n \frac{\cl_{1,n_{+}}-2\sqrt{n_{+}}}{n_{+}^{1/6}(\log n_+)^{2/3}} + \frac{a_n}{n^{1/6}(\log n)^{2/3}}$$
where $a_n=2\sqrt{n_{+}}-2\sqrt{n}=o(n^{1/6})$ (this is where we use $5/8<2/3$) and 
$$b_n:=\frac{n_+^{1/6}(\log n_+)^{2/3}}{n^{1/6}(\log n)^{2/3}}\to 1$$
as $n\to \infty$. It therefore follows from Proposition \ref{l:LFL for sequence}, (i) that almost surely
$$\limsup_{n \rightarrow \infty} \frac{L_{n}-2\sqrt{n}}{n^{1/6}(\log n)^{2/3}} \le \l( \frac 18 \r)^{2/3}.$$

To show that almost surely
$$\limsup_{n \rightarrow \infty} \frac{L_{n}-2\sqrt{n}}{n^{1/6}(\log n)^{2/3}} \ge \l( \frac 18 \r)^{2/3},$$
we need to almost surely exhibit a (possibly random) subsequence along which $\frac{L_{n}-2\sqrt{n}}{n^{1/6}(\log n)^{2/3}}$ converges to $\l( \frac 18 \r)^{2/3}$. By Proposition \ref{l:LFL for sequence}, (i), almost surely there exists a subsequence $n_k$ such that 

$$\lim_{k \rightarrow \infty} \frac{\cl_{1,n_k}-2\sqrt{n_k}}{n_k^{1/6}(\log n_k)^{2/3}} = \l( \frac 18 \r)^{2/3}.$$ 

Writing $h_k=P_{n_k}$, by definition, $\cl_{1,n_k}=L_{h_k}$ and therefore and it follows that 
$$\frac{L_{h_k}-2\sqrt{h_k}}{h_k^{1/6}(\log h_k)^{2/3}}=d_{k} \frac{\cl_{1,n_k}-2\sqrt{n_k}}{n_k^{1/6}(\log n_k)^{2/3}}+ \frac{c_k}{h_k^{1/6}(\log h_k)^{2/3}}$$
where 
$d_k=\frac{n_k^{1/6}(\log n_k)^{2/3}}{h_k^{1/6}(\log h_k)^{2/3}}$ and $c_k=2\sqrt{n_k}-2\sqrt{h_k}$. 

Arguing as in \eqref{eq:depossion} it follows that almost surely for all $n$ sufficiently large $n-n^{5/8}\le P_n \le n+n^{5/8}$ and therefore $d_k\to 1$ and $c_k=o(h_k^{1/6})$ as $k\to \infty$. It therefore follows that 
$$\lim_{k\to \infty} \frac{L_{h_k}-2\sqrt{h_k}}{h_k^{1/6}(\log h_k)^{2/3}} = \l( \frac 18 \r)^{2/3},$$
as required. 
\end{proof}

Like the Brownian LPP case (see the remark after Theorem \ref{t:LPP}), if we move along the diagonal direction (i.e., consider $\cl_{t,t}$ instead of $\cl_{1,t}$) we will get a different scaling. In particular, as we discussed in the Brownian LPP case we will obtain a \textit{law of iterated logarithm} in the diagonal direction (for the limsup result see \cite{Su20}).

The proof of Proposition \ref{l:LFL for sequence}, which uses arguments rather similar to the ones in the proofs of Theorems \ref{t:LPP} and \ref{t:LPPupper}, is given in Section \ref{s:lisfull}. Recall that these proofs rely on (a) the sharp tail estimates for the last passage times and (b) the decorrelation estimate. The sharp tail estimates for the Poissonian LPP are available from \cite{LM01,LMS02}, while the decorrelation estimate is proved in Proposition \ref{p: Poisson decorr} using arguments similar to the proof of Proposition \ref{p:decorr}. The only difference in this case is that the decorrelation happens at scale $n^{5/6}$ (compared to the scale $n^{2/3}$ in the Brownian LPP case) since the typical vertical transversal fluctuation of geodesics from $(0,0)$ to $(1,n)$ in Poissonian LPP is of the order $n^{5/6}$. We postpone further details to Section \ref{s:lisfull}. 

\subsection{Two $0-1$ laws}
As mentioned before, we also establish two general $0-1$ laws for the sequence $D_{1,n}$ and for the function $\cl_{1,t}$. Although our proofs of Theorem \ref{t:LPP} and Theorem \ref{t:LPPupper}, Proposition \ref{l:LFL for sequence} do not rely on the following facts, these results are interesting in its own sake and we shall provide the proof at the end of next section.
\begin{proposition}
    \label{p:01} Let $f:\R_+\rightarrow \R_+$ be a function which satisfies (i) $\lim\limits_{t \rightarrow \infty} f(t)=\infty$ and (ii) $\lim\limits_{t \rightarrow \infty} \frac{f(t)}{f(t-1)}=1.$ 
    \begin{enumerate}[label=(\roman*), font=\normalfont]
    \item
    Then, the limsup and liminf of  
$$\frac{\left({D_{1,n}-2\sqrt{n}}\right){n^{1/6}}}{f(n)}$$
   are constants almost surely.
   \item Then the limsup and liminf of 
   $$\frac{\left({\cl_{1,t}-2\sqrt{t}}\right)}{f(t)t^{1/6}}$$ are constants almost surely.
   \end{enumerate}
    
\end{proposition}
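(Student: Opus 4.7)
The plan is to prove Proposition \ref{p:01}(i) by combining a coupling argument with Kolmogorov's zero-one law applied to the tail $\sigma$-algebra $\mathcal{T} = \bigcap_{k \geq 1} \sigma(B_{k+1}, B_{k+2}, \ldots)$ of the i.i.d.\ Brownian motions. The main technical input is a deterministic comparison between $D_{1,n}$ and a truncated version $D^{(k)}_{1,n}$ depending only on later Brownian motions. For each $k \geq 1$, I define $D^{(k)}_{1,n}$ to be the Brownian LPP from $(0, k+1)$ to $(1, n)$ and will establish the sandwich
\[
D^{(k)}_{1,n} \leq D_{1,n} \leq D^{(k)}_{1,n} + M_k \quad \text{for all } n > k,
\]
where $M_k$ is an almost surely finite random variable depending only on $B_1, \ldots, B_{k+1}$. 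The lower bound follows from the degenerate partition $t_1 = \cdots = t_k = 0$. For the upper bound, I decompose $D_{1,n}$ over the exit time $s$ from row $k$: the first $k$ rows contribute at most the finite r.v.\ $\sup_{s \in [0,1]} D_{(0,1) \to (s, k)}$, while the remainder satisfies $D_{(s, k+1) \to (1, n)} \leq D^{(k)}_{1,n} - B_{k+1}(s)$ (by prepending the interval $[0,s]$ in row $k+1$ to any path from $(s, k+1)$), hence is bounded by $D^{(k)}_{1,n} + \sup_s |B_{k+1}(s)|$.

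Writing $Y := \limsup_n (D_{1,n} - 2\sqrt n) n^{1/6}/f(n)$ and $Y^{(k)}$ for the analogous quantity with $D^{(k)}_{1,n}$ in place of $D_{1,n}$, the sandwich yields $|Y - Y^{(k)}| \leq M_k \cdot \limsup_n n^{1/6}/f(n)$. When $f(n)/n^{1/6} \to \infty$ the right-hand side vanishes, forcing $Y = Y^{(k)}$ a.s.\ for every $k$, whence $Y \in \mathcal{T}$ and Kolmogorov's zero-one law gives that $Y$ is a.s.\ constant. When $f(n) = O(n^{1/6})$ the sandwich alone is too weak and I instead invoke the decorrelation estimate in Proposition \ref{p:decorr}: the i.i.d.\ sequence $(X_k)$ produced there approximates $(D_{1, u_k})$ up to error $\varepsilon_2 u_k^{-1/6}$ except on events of probability $\leq \exp(-k^\delta)$, which are summable in $k$. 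Borel--Cantelli makes the approximation valid for all large $k$ almost surely; hence the subsequential limsup $\limsup_k (D_{1, u_k} - 2\sqrt{u_k}) u_k^{1/6}/f(u_k)$ equals $\limsup_k (X_k - 2\sqrt{u_k}) u_k^{1/6}/f(u_k)$, which is a tail function of the i.i.d.\ sequence $(X_k)$ and therefore a.s.\ constant by Kolmogorov. Condition (ii) together with the strong-correlation bounds (Propositions \ref{lem: liminf lower bound} and \ref{lemma: maximum of point to interval
 estimates}) then let one interpolate this subsequential constant back to the full $\limsup$ over all $n$, giving the a.s.\ constancy of $Y$. The same argument applied to $-D_{1,n}$ handles the $\liminf$.

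Part (ii) for Poissonian LPP is handled by the analogous strategy: the coupling bound is obtained by shifting the starting corner from $(0, 0)$ to $(0, r)$ for $r > 0$, which gives an approximation depending only on Poisson points in $[0, 1] \times [r, \infty)$, and the decorrelation at scale $n^{5/6}$ (Proposition \ref{p: Poisson decorr}) replaces Proposition \ref{p:decorr}. The main obstacle throughout is the slow-scaling regime $f(n) = O(n^{1/6})$, where the sandwich does not vanish after scaling: establishing the zero-one law there requires extracting a tail-measurable subsequence via decorrelation and interpolating back via the correlation estimates, and this is precisely where condition (ii) on the smoothness of $f$ is used to control the scaling across the interpolation intervals.
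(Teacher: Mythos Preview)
Your setup—defining truncated passage times $D^{(k)}_{1,n}$ from $(0,k+1)$ and aiming for tail-measurability of $Y$ via $Y=Y^{(k)}$—is the same as the paper's, but you miss the one observation that makes the argument work uniformly in $f$ and renders the entire case analysis unnecessary. The paper does not use a two-sided sandwich. It keeps only your lower bound $D^{(k)}_{1,n}\le D_{1,n}$, which gives $Y^{(k)}\le Y$ almost surely, and couples it with \emph{stationarity}: since $D^{(k)}_{1,n}\stackrel{d}{=}D_{1,n-k}$, and since $(\sqrt n-\sqrt{n-k})n^{1/6}=O(n^{-1/3})\to 0$ while $f(n)/f(n-k)\to 1$ and $n^{1/6}/(n-k)^{1/6}\to 1$ by conditions (i)--(ii), one has $Y^{(k)}\stackrel{d}{=}Y$. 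A random variable that is almost surely no larger than another with the same law must equal it, so $Y^{(k)}=Y$ almost surely for every $k$, whence $Y$ is tail-measurable. This is a purely soft argument using no tail bounds, no decorrelation, and no growth assumption on $f$ beyond (i) and (ii); the Poissonian part (ii) is identical with horizontal strips in place of Brownian motions.

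Your fallback route through Proposition~\ref{p:decorr} in the regime $f(n)=O(n^{1/6})$ has a genuine gap. Even granting that the subsequential $\limsup$ along $u_k=\lfloor k^{3+\varepsilon_1}\rfloor$ is an a.s.\ constant, you cannot in general recover the full $\limsup$ from it. The gaps $u_{k+1}-u_k$ are of order $u_k^{(2+\varepsilon_1)/(3+\varepsilon_1)}\gg u_k^{2/3-\varepsilon'}$, so a single application of Propositions~\ref{lem: liminf lower bound} and~\ref{lemma: maximum of point to interval estimates} does not cover them, and a union bound over sub-intervals produces bounds that depend on $x$ in a way you cannot match to an arbitrary $f$. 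More fundamentally, condition (ii) only controls $f(t)/f(t-1)$; over the polynomially long windows $[u_k,u_{k+1}]$ the function $f$ may vary by an arbitrary multiplicative factor (e.g.\ $f(n)=n^{1/6+\sin(\log\log n)/10}$ satisfies (i)--(ii)), so there is no reason for the full $\limsup$ to agree with the subsequential one. Your case dichotomy is also not exhaustive, since $n^{1/6}/f(n)$ need neither tend to $0$ nor remain bounded. The paper's monotone-plus-stationary trick sidesteps all of this.
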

\begin{remark}Note that in the first statement both $(\log t)^{2/3}$ and $(\log t)^{1/3}$ satisfy the conditions of the proposition. Also, if $f(n) \gg (\log n)^{2/3}$ then by Theorem \ref{t:LPPupper} and Theorem \ref{t:LPP} both the $\limsup$ and $\liminf$ are zero, and if $f$ is such that $f(n) \ll (\log n)^{1/3}$ then both the $\limsup$ and $\liminf$ are infinite. However, if $f(n)$ is such that it oscillates between $(\log n)^{2/3}$ and $(\log n)^{1/3}$ then the result in Proposition \ref{p:01}, (i) does not follow from Theorem \ref{t:LPPupper} or Theorem \ref{t:LPP}. For example, let us define the function $f(x)$ such that
\[
f(x)= \begin{cases}
    ( \log x)^{2/3} \text{ when } x=(2n)^3\\
    (\log x)^{1/3} \text{ when } x=(2n+1)^3\\
    \text{linear in between}.
\end{cases}
\]
Note that $f(n)$ satisfies the conditions in Proposition \ref{p:01} and hence, the $\limsup$ and the $\liminf$ are constants  after dividing by $f(n)$, but this does not follow from Theorem \ref{t:LPPupper} and Theorem \ref{t:LPP}. Similar remark is true for the second statement as well. 
\end{remark}

\subsection*{Organisation} Section \ref{s:blpp} provides the remaining proofs in the Brownian LPP case, i.e., the proofs of Propositions \ref{lem: liminf lower bound}, \ref{p:decorr} and \ref{lemma: maximum of point to interval estimates}. Section \ref{s:lisfull} completes the arguments in the Poissonian LPP case by proving Proposition \ref{l:LFL for sequence}. Section \ref{s:01} proves Proposition \ref{p:01}.

\subsection*{Acknowledgements}  This work is partly supported by the DST FIST program-2021[TPN-700661].  JB is supported by scholarship from Ministry of Education (MoE). RB is partially supported by a MATRICS grant (MTR/2021/000093) from SERB, Govt.\ of India, DAE project no.\ RTI4001 via ICTS, and the Infosys Foundation via the Infosys-Chandrasekharan Virtual Centre for Random Geometry of TIFR. SB is supported by scholarship from National Board for Higher Mathematics (NBHM) (ref no: 0203/13(32)/2021-R\&D-II/13158).

\section{Proofs of the Brownian LPP estimates}
\label{s:blpp}
Before starting with the proofs of the main propositions we need some basic estimates. 

\subsection{Auxiliary estimates}
\label{s: auxiliary estimates}
In this subsection we record some facts about Brownian LPP that we use throughout our proofs. First we need the existence and uniqueness of maximum passage time attaining paths. For any $x,y \in \R, i,j \in \N$ with $x\leq y$ and $i \leq j$ and $x=t_0 \leq t_1 \leq \cdots \leq t_{j-i+1}=y$ the collection of the intervals $[t_{k-1},t_{k}] \times \{i+k-1\}$ is called a path from $(x,i)$ to $(y,j).$

\begin{proposition}\cite[Lemma B.1]{hammond19} For a fixed pair $(x,i),(y,j)$ with $y \geq x$ and $j \geq i$ there exists an almost surely unique path which attains $D_{(x,i) \rightarrow (y,j)}.$
\end{proposition}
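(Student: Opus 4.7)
The plan is to handle existence by compactness and uniqueness by induction on the number of rows, reducing the latter to the standard fact that the argmax of a continuous deterministic function minus a Brownian motion is almost surely unique.

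\textbf{Existence.} The objective
$$(t_1,\dots,t_{j-i}) \;\longmapsto\; \sum_{k=1}^{j-i+1}\bigl[B_{i+k-1}(t_k) - B_{i+k-1}(t_{k-1})\bigr],$$
with $t_0 := x$ and $t_{j-i+1} := y$, is continuous on the compact simplex $\{x \le t_1 \le \cdots \le t_{j-i} \le y\}$ since Brownian sample paths are continuous, so the supremum is attained.

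\textbf{Uniqueness.} I would induct on $n := j - i$; the case $n=0$ is trivial. For the inductive step, I split at the final breakpoint $\tau = t_{j-i}$:
$$D_{(x,i)\to(y,j)} \;=\; \sup_{x \le \tau \le y} \bigl[\, g(\tau) + B_j(y) - B_j(\tau) \,\bigr], \qquad g(\tau) := D_{(x,i)\to(\tau,j-1)}.$$
The function $g$ is continuous in $\tau$ (by uniform continuity of the Brownian motions on $[x,y]$ together with an easy two-sided bound comparing $D_{(x,i)\to(\tau,j-1)}$ and $D_{(x,i)\to(\tau',j-1)}$ via admissible perturbations of the last interval) and $\sigma(B_i,\dots,B_{j-1})$-measurable, hence independent of $B_j$. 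Conditional on $g$, the key claim is that $\tau \mapsto g(\tau) - B_j(\tau)$ attains its maximum on $[x,y]$ at an almost surely unique $\tau^*$. Granted this, the inductive hypothesis applied to the sub-problem $D_{(x,i)\to(\tau^*, j-1)}$ yields a unique maximizing path from $(x,i)$ to $(\tau^*, j-1)$, and concatenating with the final segment $[\tau^*,y]\times\{j\}$ produces the unique maximizer of $D_{(x,i)\to(y,j)}$.

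\textbf{Main obstacle.} The hard step is the argmax-uniqueness claim for $\tau \mapsto g(\tau) - W(\tau)$ with $g$ continuous deterministic and $W$ a standard Brownian motion. My plan is: for any pair of disjoint closed subintervals $I, J \subset [x,y]$, the random variable $\max_I(g - W) - \max_J(g - W)$ is absolutely continuous (by conditioning on the path of $W$ outside $I$ and exploiting the Gaussian density of the increments on $I$), so $\P(\max_I(g-W) = \max_J(g-W)) = 0$. Taking a countable union over partitions of $[x,y]$ into rational-endpoint intervals of vanishing length then rules out the existence of two distinct argmaxes (any two distinct optimizers are eventually separated by some such pair of intervals). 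Finally, a Fubini / conditioning step transfers the conclusion from deterministic $g$ to our setting where $g$ is random but independent of $B_j$, closing the induction.
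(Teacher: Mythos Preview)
The paper does not prove this statement at all; it is simply quoted from \cite{hammond19} and used as a black box. So your self-contained argument is already more than what the paper offers, and the overall strategy (compactness for existence, peeling off the top row and reducing to argmax-uniqueness for a continuous function minus Brownian motion) is the standard one and is essentially how the cited reference proceeds.

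There is, however, one genuine gap in your induction. Your inductive hypothesis is stated for \emph{fixed} endpoints, but you then apply it at the \emph{random} endpoint $(\tau^*,j-1)$. Since $\tau^*$ depends on $g$ (and hence on $B_i,\dots,B_{j-1}$), this is not justified: the inductive hypothesis only gives you, for each fixed $\tau$, a null set of configurations on which the sub-geodesic to $(\tau,j-1)$ is non-unique, and you have not ruled out that $\tau^*$ lands in the exceptional set. Knowing only that this set is Lebesgue-null (Fubini) is not enough unless you also show that the conditional law of $\tau^*$ given $B_i,\dots,B_{j-1}$ is absolutely continuous, which for an arbitrary continuous $g$ is not obvious.

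The clean fix is to strengthen the inductive hypothesis to: almost surely, \emph{simultaneously for every} $\tau\in[x,y]$, the geodesic from $(x,i)$ to $(\tau,j-1)$ is unique. Your own disjoint-interval argument already delivers this stronger statement. Indeed, writing $I=[a,b]$ and $J=[c,d]$ with $b<c$, decompose via the independent increment $\Delta=B_j(c)-B_j(b)$ to get $\max_I(g-B_j)-\max_J(g-B_j)=A-B+\Delta$ with $A,B,\Delta$ mutually independent and $\Delta$ Gaussian, so the difference has a density. Taking the countable union over rational $I,J$ shows that a.s.\ $g-B_j$ has distinct maxima over any two disjoint rational subintervals, which forces a unique maximizer on \emph{every} subinterval of $[x,y]$, not just on $[x,y]$ itself. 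With the hypothesis strengthened in this way the induction closes: the last breakpoint is unique for every $\tau$, and the sub-geodesic to that breakpoint is unique because the strengthened hypothesis covers all endpoints at once.
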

Maximum attaining paths are called \textit{geodesics}. The geodesic to $(0,1)$ to $(t,n)$ will be denoted by $\Gamma_{t,n}.$ For ordered pair of vertices $u,v \in \R \times \N$, let $ \Gamma_{u,v}$ denote the a.s. unique geodesic from $u$ to $v$. Our first observation is about ordering of geodesics which is a direct consequence of \cite[Lemma 5.7]{Ham20}.
\begin{observation}
\label{o: ordering}
    Let $n_2 > n_1 \geq 0$ and consider the geodesics $\Gamma_{1,n_1}$ and $\Gamma_{1,n_2}.$ Then for all $n \leq n_1$ if $(x,n) \in \Gamma_{1, n_2}$ and $(y,n) \in \Gamma_{1,n_1}$ then $x \leq y.$
\end{observation}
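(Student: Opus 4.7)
The plan is to obtain the ordering as a direct consequence of \cite[Lemma 5.7]{Ham20}, which is exactly the non-crossing property invoked here; alternatively, one can rerun the classical swap argument from scratch. First I would reformulate the claim: if $[a'_m, b'_m]$ and $[a_m, b_m]$ denote the time intervals that $\Gamma_{1,n_2}$ and $\Gamma_{1,n_1}$ respectively spend on row $m$, the statement is that $b'_m \le a_m$ for every $m \le n_1$, with the two intervals possibly sharing an endpoint. The intuition is clear: $\Gamma_{1,n_2}$ must traverse $n_2 - 1$ upward steps in the same horizontal time window in which $\Gamma_{1,n_1}$ traverses only $n_1 - 1$, so the former must climb weakly earlier off every intermediate level.

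Second, to prove this I would argue by contradiction via a swap at the crossing. Suppose for some $m \le n_1$ the intervals $[a_m, b_m]$ and $[a'_m, b'_m]$ overlap in a way that violates the claim, so there exists $s \in [a_m, b_m] \cap [a'_m, b'_m]$ such that $(s, m)$ lies on both geodesics. Define $\widetilde{\Gamma}_{1,n_1}$ by following $\Gamma_{1,n_2}$ up to $(s, m)$ and then continuing as $\Gamma_{1,n_1}$, and define $\widetilde{\Gamma}_{1,n_2}$ by the complementary swap. On row $m$ the pair of paths then occupies the time intervals $[a'_m, b_m]$ and $[a_m, b'_m]$, and the sum of Brownian increments $B_m(b_m) - B_m(a'_m) + B_m(b'_m) - B_m(a_m)$ equals the original $B_m(b_m) - B_m(a_m) + B_m(b'_m) - B_m(a'_m)$; on every other row the swap merely permutes the two paths, preserving increments. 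Hence the total weight is conserved: the sum of the passage times along the two new paths equals $D_{1,n_1} + D_{1,n_2}$.

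Third, since $\widetilde{\Gamma}_{1,n_i}$ is an admissible path from $(0, 1)$ to $(1, n_i)$, its weight is at most $D_{1,n_i}$, and the conservation identity forces equality in both inequalities. Thus each swapped path is itself a geodesic for its endpoint pair, and the almost sure uniqueness of geodesics recorded just above the observation forces $\widetilde{\Gamma}_{1,n_i} = \Gamma_{1,n_i}$ for $i = 1, 2$. This is consistent with the construction only if the hypothesized violation was degenerate, contradicting the assumption that the two intervals on row $m$ were crossed. The main obstacle I anticipate is the technical care in choosing $s$ so that the swap produces genuinely distinct paths (one wants the shared point on the crossing row to lie strictly interior to the overlap, which is where the hypothesized violation is used). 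Once this is handled, as is done in detail in \cite[Lemma 5.7]{Ham20}, the observation follows directly.
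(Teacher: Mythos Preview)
There is a gap in your swap argument, and you miss the one reduction step that constitutes the paper's proof. Your reformulation $b'_m\le a_m$ is stronger than the intended (weak) ordering and already fails at $m=1$: both geodesics start at $(0,1)$, so $a_1=a'_1=0$ while $b'_1>0$ generically. What the observation and its applications actually need is $\Gamma_{1,n_2}(t)\ge\Gamma_{1,n_1}(t)$ for all $t$, equivalently $a'_m\le a_m$ and $b'_m\le b_m$. More seriously, your swap does not yield a contradiction: if the two row-$m$ intervals merely overlap at $(s,m)$, swapping and invoking uniqueness only gives that the two geodesics coincide on $[0,s]$. That is not a violation of anything---it is exactly what happens whenever the paths share an initial segment, as they necessarily do near $(0,1)$. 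A correct swap argument must start from a genuine crossing (some $t_0$ with $\Gamma_{1,n_2}(t_0)<\Gamma_{1,n_1}(t_0)$), locate a later meeting point $s>t_0$ using that $n_2>n_1$ forces $\Gamma_{1,n_2}$ eventually to overtake, and then the forced coincidence on $[0,s]$ contradicts the hypothesis at $t_0$.

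The paper's own proof is different and shorter: it restricts $\Gamma_{1,n_2}$ to rows $1,\dots,n_1$, observes that this restriction is itself a geodesic from $(0,1)$ to some $(x^*,n_1)$ with $x^*\le 1$, and then applies \cite[Lemma~5.7]{Ham20} to two geodesics with common start and horizontally ordered endpoints on the \emph{same} row $n_1$. That truncation is what puts the cited lemma in scope, and it is precisely the step you omit when you assert the lemma is ``exactly'' the property required.
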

\begin{proof} Note that the restriction of the geodesic $\Gamma_{1,n_2}$ up to the horizontal line $y=n_1$ is a geodesic between $(0,1)$ and its end point. Now, by the ordering of geodesics proved in \cite[Lemma 5.7]{Ham20} the observation follows.
\end{proof}
For $(t,n) \in [0, \infty) \times \N $, the geodesic $ \Gamma_{t,n}$ can be thought of a random step function (possibly multi valued) from $[0,t]$ to $\left \{1,2,\dots, n \right \}$ defined as follows.
\[
\Gamma_{t,n}(x)=i \text{ if } (x,i) \in \Gamma_{t,n}.
\] We define another random step function $\Gamma'_{t,n}:[0,1] \rightarrow \{ 1,2, \dots, n\}$ as $\Gamma'_{t,n}(x)=\Gamma_{t,n}(tx).$ As a consequence of Brownian scaling we have the following scaling of last passage time and geodesics in Brownian LPP.

\begin{observation}[Scaling of Brownian LPP]
\label{p: scaling}We have 
\begin{enumerate}[label=(\roman*), font=\normalfont]
\item For all $t >0$ 
\[
\left \{\frac{D_{t,n}}{\sqrt{t}} \right\}_{n \geq 1}\overset{d}{=} \left \{D_{1,n} \right \}_{n \geq 1}.
\]
\item As random step functions
\[
\Gamma'_{t,n} \overset{d}{=}\Gamma_{1,n}.
\]
\end{enumerate}
\end{observation}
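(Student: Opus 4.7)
The plan is to derive both assertions directly from the Brownian scaling property, treating part (i) as a change of variables in the supremum defining $D_{t,n}$ and then reading off part (ii) by tracking how maximizing partitions transform under that change of variables.

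First I would set $\wt B_i(s) := t^{-1/2} B_i(ts)$ for each $i \geq 1$. By Brownian scaling, each $\wt B_i$ is again a standard Brownian motion, and since the scaling is applied deterministically and independently to each $B_i$, the collection $\{\wt B_i\}_{i \geq 1}$ is i.i.d.\ standard Brownian motion and therefore has the same law, jointly in $i$, as $\{B_i\}_{i \geq 1}$. This is the only probabilistic input needed; everything else is a deterministic identity.

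For part (i), I would substitute $t_k = t s_k$ in the definition of $D_{t,n}$. The map $(s_0,\ldots,s_n) \mapsto (ts_0,\ldots,ts_n)$ is a bijection between partitions of $[0,1]$ and partitions of $[0,t]$, and the integrand transforms as
\[
\sum_{i=1}^n \bigl[B_i(t_i) - B_i(t_{i-1})\bigr] = \sqrt{t}\sum_{i=1}^n \bigl[\wt B_i(s_i) - \wt B_i(s_{i-1})\bigr].
\]
Taking sup over admissible partitions on both sides gives $D_{t,n} = \sqrt{t}\,\wt D_{1,n}$ as an identity of random variables, where $\wt D_{1,n}$ is the last passage time defined using the Brownian motions $\{\wt B_i\}$. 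This identity holds simultaneously for all $n \geq 1$ (the same rescaling works for every $n$), so dividing by $\sqrt{t}$ and using that $\{\wt B_i\} \stackrel{d}{=} \{B_i\}$ yields the joint distributional equality claimed.

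For part (ii), I would observe that the same change of variables sends the (a.s.\ unique) maximizing partition for $D_{t,n}$ to the maximizing partition for $\wt D_{1,n}$, so the geodesic in the $B$-LPP between $(0,1)$ and $(t,n)$ corresponds under $(x,i) \mapsto (x/t, i)$ to the geodesic in the $\wt B$-LPP between $(0,1)$ and $(1,n)$. In the notation of the statement this reads $\Gamma'_{t,n}(x) = \Gamma_{t,n}(tx) = \wt \Gamma_{1,n}(x)$ for all $x \in [0,1]$, and the equality in distribution $\wt \Gamma_{1,n} \stackrel{d}{=} \Gamma_{1,n}$ (as random step functions) again follows from $\{\wt B_i\} \stackrel{d}{=} \{B_i\}$.

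There is no real obstacle: the observation is essentially a tautology once Brownian scaling is applied coordinate-wise. The only care needed is to note that the scaling is performed with a fixed deterministic $t > 0$ (so joint independence of the $\wt B_i$ is preserved) and that the bijection on partitions is measure-preserving in the sense relevant for suprema, which is immediate.
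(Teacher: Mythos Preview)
Your proposal is correct and follows essentially the same approach as the paper: both proofs invoke Brownian scaling to identify the rescaled family $\{t^{-1/2}B_i(t\,\cdot)\}$ with $\{B_i\}$, use the bijection $s_k \leftrightarrow ts_k$ between partitions of $[0,1]$ and $[0,t]$ to obtain (i), and then read off (ii) by tracking the maximizing partition under that bijection. Your write-up is in fact more explicit than the paper's, which simply asserts that (i) is ``a direct consequence of Brownian scaling'' and that (ii) follows from the partition correspondence.
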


\begin{proof}(i) This is a direct consequence of Brownian scaling. By the Brownian scaling the finite dimensional distributions $\left \{D_{1,i} \right \}_{1 \leq i \leq n}$ and $\left \{\frac{D_{t,i}}{\sqrt{t}} \right \}_{1 \leq i \leq n}$ are same and hence, as an infinite random vector also $\left \{\frac{D_{t,n}}{\sqrt{n}} \right \}_{n \geq 1}$ and $\left \{D_{1,n} \right\}_{n \geq 1}$ have same distributions.\\
(ii) The second part follows from the first part and by observing that after scaling, any partition $0 \leq t_0 \leq t_1 \leq t_2 \leq \dots \leq t_n=t$ is in one-one correspondence with $0 \leq \frac{t_0}{t} \leq \frac{t_1}{t} \leq \dots \leq \frac{t_n}{t}=1$ which is a partition of $[0,1].$
\end{proof}

Observation \ref{p: scaling} is not new; it has been used before to establish the skew-invariance (sometimes also referred to as shear-invariance) of the \emph{directed landscape}, the space-time scaling limit of Brownian LPP; see e.g.\ \cite{DOV22}. In the Brownian LPP set-up this has also been used in \cite{BGH21}.

For any $ 0 \leq t \leq 1$, let $\Gamma_{1,n}(t):=\{i \in \{1,2, \dots n\}: (t,i) \in \Gamma_{1,n} \}$. The final lemma we need is about the transversal fluctuation and the local transversal fluctuation of the geodesics in the vertical direction. For $n \geq 1$ we define $\mathsf{TF}_n$ as follows.
\[
\mathsf{TF}_n:=\max_{0 \leq t \leq 1} \left \{\max_{i \in \Gamma_{1,n}(t)}|i-tn|. \right \}
\]
\begin{lemma} 
    \label{lemma: transversal fluctuation}We have\begin{enumerate}[label=(\roman*), font=\normalfont]
        \item There exist $c, n_0,\ell_0>0$ such that for all $n \geq n_0, \ell_0 \leq \ell\leq n^{1/10}$
        \[\P \left( \mathsf{TF}_n \geq \ell n^{2/3} \right) \leq \exp(-c\ell^3).
        \]
        \item Let $n_0,\ell_0$ be as above and let $n_1= \max \{\ell_0^{10}, n_0 \}$. Then there exists $c'>0$ such that for all $n \geq n_1, \ell_0 \leq \ell\leq (tn)^{1/10}$ and for all $t \in [0,1]$ with $tn \geq n_1$
        \[\P \left(\max_{i \in \Gamma_{1,n}(t)}|i-tn| \geq \ell (tn)^{2/3}\right) \leq \exp(-c'\ell^3).\]
    \end{enumerate}
\end{lemma}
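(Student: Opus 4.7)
The plan is to reduce both parts to the sharp LPP tail estimates (Propositions~\ref{p: lower tail estimates} and~\ref{p: upper tail estimates}) via the \emph{two-sided decomposition}: if the geodesic $\Gamma_{1, n}$ passes through $(t, i)$, then
\[
D_{1, n} = D_{(0,1) \to (t, i)} + D_{(t, i+1) \to (1, n)}.
\]
Writing $i = tn + s$ and Taylor-expanding yields the concavity \emph{deficit}
\[
2\sqrt{ti} + 2\sqrt{(1-t)(n-i)} \le 2\sqrt{n} - \frac{c_0\, s^2}{t(1-t)\, n^{3/2}}, \qquad |s| \le tn/2,
\]
which, for $|s| \ge \ell (tn)^{2/3}$, is of order $\Theta(\ell^2 t^{1/3}(1-t)^{-1} n^{-1/6})$.

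For part~(ii), fix $t$ and assume WLOG $t \le 1/2$ (the case $t > 1/2$ is symmetric, with the roles of the two LPPs exchanged). By Brownian scaling (Observation~\ref{p: scaling}), the two auxiliary LPPs $D_{(0,1)\to(t,i)}$ and $D_{(t,i+1)\to(1,n)}$ have natural fluctuation scales $t^{1/3} n^{-1/6}$ and $(1-t)^{1/3} n^{-1/6}$ respectively; in the left LPP's units the deficit is $\Theta(\ell^2)$. Combining the upper tail Proposition~\ref{p: upper tail estimates} applied to the left LPP with the lower tail Proposition~\ref{p: lower tail estimates} for $D_{1, n}$ (with threshold compatible with the deficit) yields tail $\exp(-c\ell^3)$ for each fixed $i$; the constraint $\ell \le (tn)^{1/10}$ keeps the relevant magnitudes within the ranges of validity of these estimates. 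A union bound over $i$ with $|i - tn| \ge \ell(tn)^{2/3}$ is handled by a dyadic grid of spacing comparable to $(tn)^{2/3}$, together with monotonicity of LPP values in the endpoint.

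For part~(i), discretise $[0, 1]$ with grid points $t_k$ at spacing $\ell n^{-1/3}/4$, giving $\asymp n^{1/3}/\ell$ points. Monotonicity of $\Gamma_{1, n}(t)$ as a nondecreasing step function of $t$ yields, for $t \in [t_k, t_{k+1}]$,
\[
|\Gamma_{1, n}(t) - tn| \le \max_{j \in \{k,k+1\}}|\Gamma_{1, n}(t_j) - t_j n| + \tfrac{\ell}{4} n^{2/3},
\]
so $\{\mathsf{TF}_n \ge \ell n^{2/3}\} \subseteq \bigcup_k \{|\Gamma_{1, n}(t_k) - t_k n| \ge \tfrac{3\ell}{4} n^{2/3}\}$. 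At each $t_k$, part~(ii) applied with the rescaled parameter $\tilde{\ell} = \tfrac{3\ell}{4}t_k^{-2/3}$ gives probability at most $\exp(-c\ell^3 t_k^{-2}) \le \exp(-c\ell^3)$. The union bound yields $\mathcal{O}(n^{1/3}/\ell) \exp(-c\ell^3)$, absorbed into $\exp(-c'\ell^3)$ for $\ell \gtrsim (\log n)^{1/3}$; for smaller $\ell \in [\ell_0, (\log n)^{1/3}]$ the bound is absorbed into the constants by choosing $\ell_0$ sufficiently large.

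The principal technical obstacle is the uniform-in-$t$ analysis in part~(ii), especially as $t \to 0$: the right LPP's fluctuation scale $(1-t)^{1/3} n^{-1/6}$ dominates the deficit in that regime, so a symmetric split of the deficit between the two LPPs fails. The resolution is to exploit the asymmetry, letting the left LPP (whose natural scale is $t^{1/3} n^{-1/6}$, comparable to the deficit) carry the burden, while the right LPP's contribution is controlled by its lower tail. A related subtlety in part~(i) is verifying that the parameter $\tilde{\ell}$ at each grid point $t_k$ falls within the range of validity $\tilde{\ell} \le (t_k n)^{1/10}$ imposed by part~(ii); this requires excluding very small $t_k$ by a separate argument based on Brownian scaling and the boundary behavior of the geodesic near $(0, 1)$.
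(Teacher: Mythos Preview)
Your approach reverses the paper's order (you aim to prove (ii) directly and then derive (i)), and as sketched it has two genuine gaps.

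\textbf{Part (i) from (ii) via discretisation fails for bounded $\ell$.} The statement must hold for every fixed $\ell\ge\ell_0$ as $n\to\infty$. Your union bound has $\asymp n^{1/3}/\ell$ grid points, and even using the sharper per-point bound $\exp(-c\ell^3 t_k^{-2})$, the terms with $t_k$ bounded away from $0$ and $1$ contribute $\Theta(n^{1/3}/\ell)\exp(-O(\ell^3))\to\infty$. The claimed fix (``absorbed into the constants by choosing $\ell_0$ sufficiently large'') cannot work because $\ell_0$ is independent of $n$. The paper avoids this entirely: it proves (i) first, by scaling $\Gamma_{1,n}$ to the diagonal geodesic $\Gamma_{n,n}$ via Observation~\ref{p: scaling}(ii) and then quoting \cite[Corollary~1.5]{GanHam23} directly.

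\textbf{Part (ii), uniformity in small $t$.} You correctly identify the obstacle---for small $t$ the deficit $\asymp \ell^2 t^{1/3} n^{-1/6}$ is dwarfed by the right LPP's fluctuation scale $(1-t)^{1/3}n^{-1/6}$---but the proposed resolution does not close it. The problematic contribution from the right LPP is its \emph{upper} tail (it can be atypically large and absorb the deficit), not its lower tail; and the lower-tail bound on $D_{1,n}$ at level comparable to the deficit only gives $\exp(-c\ell^6 t)$, again not uniform. The paper instead derives (ii) from (i): after scaling to $\Gamma_{n,n}$ with $r=tn$, it runs a dyadic bootstrap over scales $2^j r$, using the ordering of geodesics (Observation~\ref{o: ordering}) to compare $\Gamma_{n,n}$ at scale $2^j r$ with the geodesic to a shifted endpoint $y_j=(2^{j+1}r,\,2^{j+1}r+\ell((2\alpha)^{j+1}r)^{2/3})$, reducing each scale to an instance of (i); summing the resulting bounds $\exp(-c\ell^3\alpha^{2j})$ over $j$ yields $\exp(-c'\ell^3)$.
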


We shall provide a proof of this lemma at the end of this section. Lemma \ref{lemma: transversal fluctuation}, (i) is essentially same as \cite[Corollary 1.5]{GanHam23} but \ref{lemma: transversal fluctuation}, (ii) appears to be new for Brownian LPP. Analogous statements have been proved in the exponential LPP, however, see e.g.\ \cite{BSS17B,BBB23}. We shall provide an argument along the same lines for the sake of completeness. Although the range of $\ell$ in the lemma is sufficient for our purpose, we expect the lemma to be true for all $\ell \leq n^{1/3}$. Indeed it is the case in case of exponential LPP (see \cite{BGZ21, BBB23}).

\subsection{Proofs of Proposition \ref{lem: liminf lower bound} and Proposition \ref{lemma: maximum of point to interval estimates}}
\begin{proof}[Proof of Proposition \ref{lem: liminf lower bound}] A stronger version of the proposition was proved for exponential LPP case in \cite[Lemma 3.9]{BB24}. Our proof follows a similar idea. Choose any small $\vep'$. By Observation \ref{p: scaling}, (i) it suffices to prove 
\begin{equation}
\label{eq: scaled liminf lower bound}
    \P\l(\min\limits_{n\leq i\leq n+n^{\frac{2}{3}-\vep'}}\frac{\l(D_{n,i}-2\sqrt{ni}\r)i^{1/6}}{\sqrt{n}}\leq -x\r)\leq \exp \left(-\frac{1}{12}(1-\vep)x^3\right).
\end{equation}

Note that for $n \leq i\leq  n+n^{\frac{2}{3}-\vep'}$, it suffices to consider the event 
\begin{align*}
    \mathcal{A}:= \left\{\min\limits_{n\leq i\leq n+n^{\frac{2}{3}-\vep'}}\frac{\l(D_{n,i}-2\sqrt{ni}\r)}{n^{1/3}}\leq -x\l(1-\frac \vep2 \r)\right\}
\end{align*}
and show that $\P\l(\mathcal{A}\r)\leq\exp \left(-\frac{1}{12}(1-\vep)x^3 \right)$. Let $n_{\vep'}=n^{1-\frac 32 \vep'}.$ Define the events (see Figure \ref{fig: GUE liminf lower bound})
\begin{figure}[t!]
\includegraphics[width=8 cm]{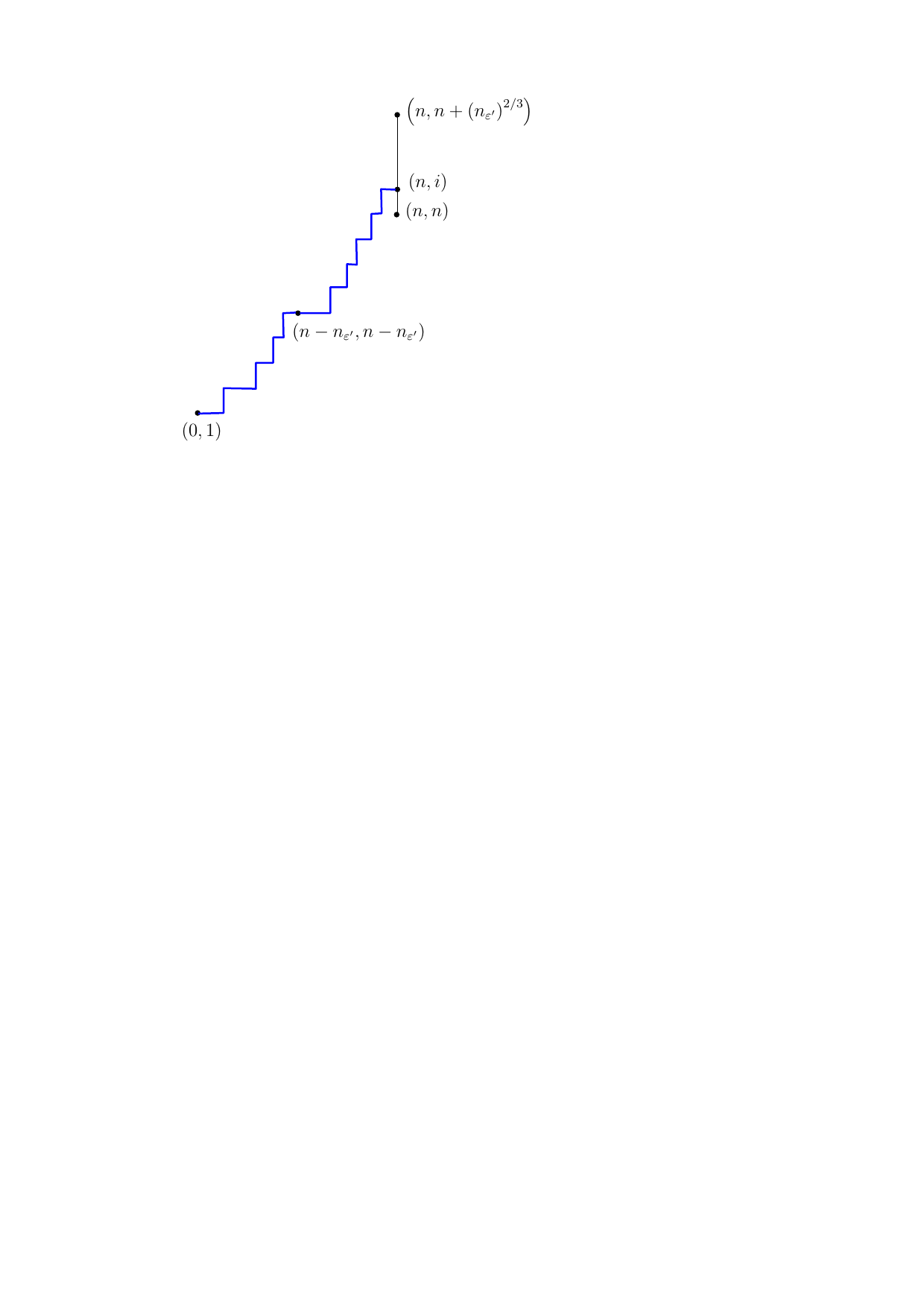}
    \caption{To prove Proposition \ref{lem: liminf lower bound}, by scaling it is enough to prove \eqref{eq: scaled liminf lower bound}. To do this we observe that for passage time between $(0,1)$ and $(n,i)$ to be small for some $i$ one of the following events must happen. Either the passage time between $(0,1)$ to $(n-n_{\vep'}, n-n_{\vep'})$ is small or the passage time between $(n-n_{\vep'}, n-n_{\vep'})$ and $(n,i)$ is small. We use the tail estimate from Proposition \ref{p: lower tail estimates} to find the desired upper bound for the first event. For each $i$ we again apply the same proposition to get an exponentially small (in $n$) upper bound for the later events. Finally, taking a union bound over $i$ gives us the upper bound in Proposition \ref{lem: liminf lower bound}.  } 
    \label{fig: GUE liminf lower bound}
\end{figure}
\begin{align*}
    \mathcal{B}&:=\left\{D_{n-n_{\vep'},n-n_{\vep'}}\leq 2(n-n_{\vep'})-x\l(1-\frac{3\vep}{4} \r)n^{1/3}\right\},\\
    \mathcal{C}&:=\left\{\min\limits_{n_{\vep'} \leq j\leq n_{\vep'}+\left(n_{\vep'}\right)^{2/3} }\wih{D}_{n_{\vep'}, j}\leq 2\sqrt{n(n-n_{\vep'}+j)}-2n+2n_{\vep'}-\frac\vep4x n^{1/3}\right\},
\end{align*}
where $\wih{D}_{n_{\vep'}, i}$ is the passage time from $(n-n_{\vep'},n-n_{\vep'})$ to $(n, n-n_{\vep'}+j)$. Note that $\P\l(\mathcal{C}\r)$ remains same if we replace $\wih{D}_{n_{\vep'}, j}$ by $D_{n_{\vep'}, (j+1)}$.

Clearly $\P\l(\mathcal{A}\r)\leq \P\l(\mathcal{B}\r)+\P\l(\mathcal{C}\r)$ (see Figure \ref{fig: GUE liminf lower bound}). Using the tail estimate from Proposition \ref{p: lower tail estimates}, we have some $n_{\vep,\vep'}, x_{\vep}$ such that for all $n\geq n_{\vep,\vep'}$ and $x_{\vep} \leq x \leq n^{1/10}$ we get $\P\l(\mathcal{B}\r)\leq \exp \left(-\frac{1}{12}x^3(1-\vep) \right)$.

For the event $\mathcal{C}$ observe that in the range $n_{\vep'}\leq j\leq n_{\vep'}+n^{\frac{2}{3}-\vep'}$, we have
\begin{align*}
    2\sqrt{n(n-n_{\vep'}+j)}-2n +2n_{\vep'}=2\sqrt{(n_{\vep'})j}+O(n^{1/3}).
\end{align*}
This is because let $j=n_{\vep'}+x (n_{\vep'})^{2/3}$ for some $0 \leq x \leq 1.$ Then a Taylor series expansion gives us 
\begin{align*}
    &2\sqrt{n(n-n_{\vep'}+j)}-2n +2n_{\vep'}=2\sqrt{n(n+x(n_{\vep'})^{2/3})}-2n+2n_{\vep'}=2n_{\vep'}+xn^{\frac 23 -\vep'}+O(n^{1/3}).
\end{align*}
Also, 
\begin{align*}
    &2\sqrt{n_{\vep'}j}=2 \sqrt{n_{\vep'}(n_{\vep'}+x(n_{\vep'})^{2/3})}=2n_{\vep'}+x(n_{\vep'})^{2/3}+O(n^{1/3})=2n_{\vep'}+xn^{\frac 23-\vep'}+O(n^{1/3}).
\end{align*}
Hence $\P\l(\mathcal{C}\r)\leq\P\l(\mathcal{C}'\r) $, where
\begin{align*}
    \mathcal{C}':=\left\{\min\limits_{n_{\vep'}\leq j\leq n_{\vep'}+\left(n_{\vep'} \right)^{2/3}}{D}_{n_{\vep'},(j+1)}\leq 2\sqrt{n_{\vep'}.j}-\frac{\vep}{8}x n^{1/3}\right\}.
\end{align*}

Again using the tail estimate from Proposition \ref{p: lower tail estimates}
\[
\P \left( {D}_{n_{\vep'},(j-1)}\leq 2\sqrt{n_{\vep'}.j}-\frac{\vep}{8}x n^{1/3}\right)\leq \exp\l(-c_{\vep}x^3n^{ \frac 32 \vep'}\r).
\]
and taking union bound over $j$, we have some $n_{\vep,\vep'}$ such that for all $n\geq n_{\vep,\vep'}$ we get $\P\l(\mathcal{C}'\r) \ll \exp \left(-\frac{1}{12}(1-\vep)x^3 \right)$. 
This gives the desired upper bound for $\P\l(\mathcal{A}\r)$.
\end{proof}

\begin{proof}[Proof of Proposition \ref{lemma: maximum of point to interval estimates}]

A stronger version of this proposition for exponential LPP case was proved in \cite[Lemma 3.3]{BB24}. Our proof follows a similar idea. By Observation \ref{p: scaling}, (i) it is enough to prove that for any $\vep, \vep'>0$ there exist $n_{\vep,\vep'}, x_{\vep}$ such that for all $n \geq n_{\vep, \vep'}$ and $x_{\vep} \leq x \leq n^{1/20}$
\[
\P \left(\max_{n \leq j \leq n+n^{\frac{2}{3}-\vep'}}\left(D_{n,j}-2 \sqrt{nj}\right)j^{1/6} \geq \sqrt{n}x\right) \leq \exp\left(-\frac{4}{3}(1-\vep) x^{3/2}\right).
\]
Also observe that for large enough $n$
\begin{align*}
& \P \left(\max_{n \leq j \leq n+n^{\frac{2}{3}-\vep'}}\left(D_{n,j}-2 \sqrt{nj}\right)j^{1/6} \geq x \sqrt{n} \right)\\
& \leq \P \left(\max_{n \leq j \leq n+n^{\frac{2}{3}-\vep'}}\left(D_{n,j}-2 \sqrt{nj}\right) \geq x n^{1/3}\l(1-\frac {\vep}{2}\r) \right).
\end{align*}
 \begin{figure}[t!]
 \includegraphics[width=10 cm]{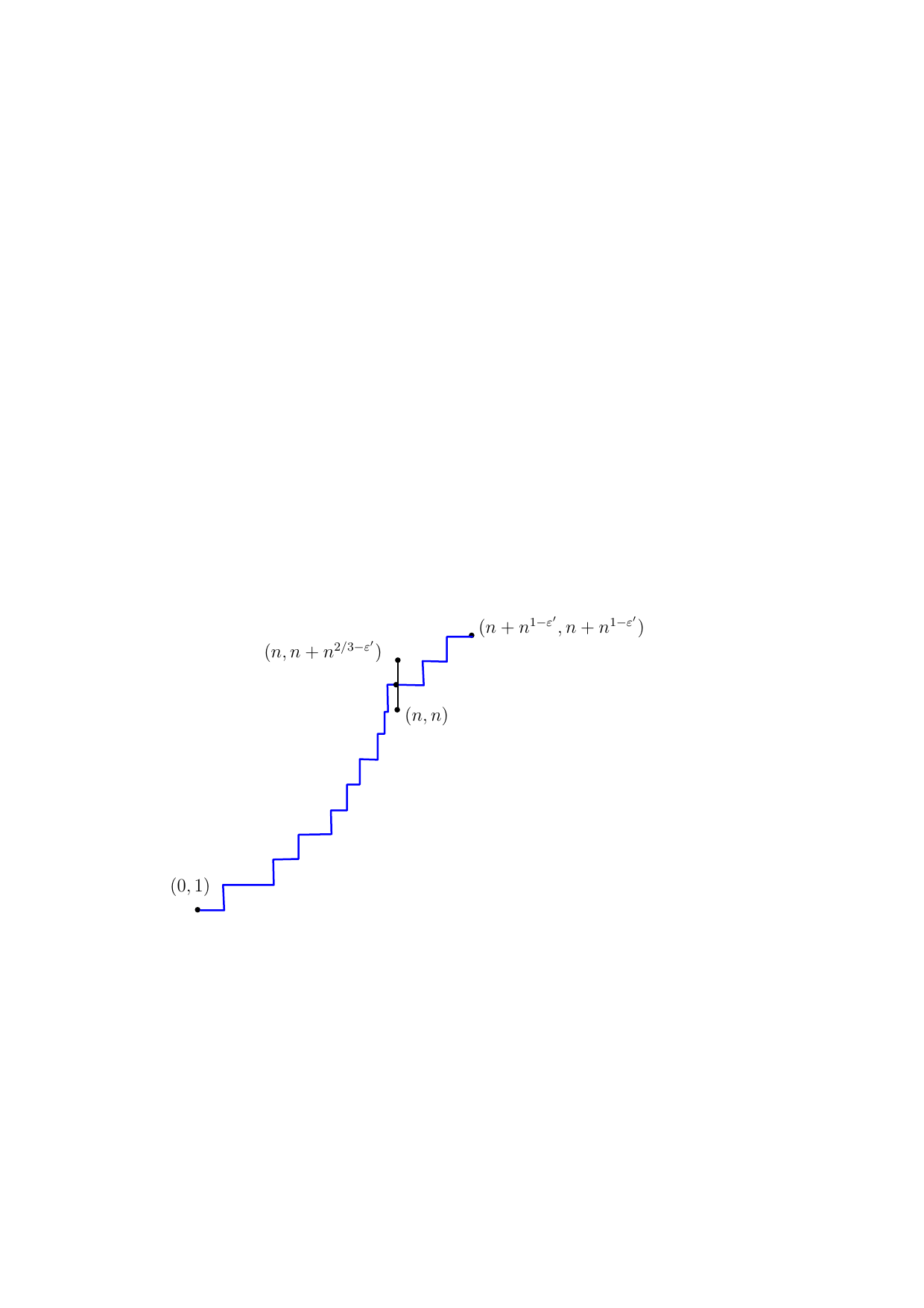}
    \caption{To prove Lemma \ref{lemma: maximum of point to interval estimates} we choose the point $(n+n^{1-\vep'},n+n^{1-\vep'})$. We want to find an upper bound for the event that there is $n \leq j \leq n+n^{\frac{2}{3}-\vep'}$ such that the centred and scaled passage time to $(n,j)$ is large. To do this we consider the two events where $D_{n+n^{1-\vep'},n+n^{1-\vep'}}$ is large (this is the event $\mathcal{A}$) and for all $j,$ the minimum passage time between $(n,j)$ and $(n+n^{1-\vep'},n+n^{1-\vep'})$ is not too small (this is the event $\mathcal{B}$). Note that if  there is $n \leq j \leq n+n^{\frac{2}{3}-\vep'}$ such that the centred and scaled passage time to $(n,j)$ is large and $\cb$ happens then $\ca$ also happens. We apply the upper bound for upper tail from Proposition \ref{p: upper tail estimates} to find the desired upper bound for $\P(\ca)$. By applying Proposition \ref{p: lower tail estimates} and applying a union bound we can make $\P(\mathcal{B}^c)$ is arbitrarily small.}
    \label{fig:point to interval(re)}
\end{figure}
We consider the point $$v=\left(n+n^{1-\vep'},n+n^{1-\vep'}\right).$$ 
For all $n \leq j \leq n+ n^{\frac{2}{3}-\vep'}$ (see Figure \ref{fig:point to interval(re)})
\[
D_{n+n^{1-\vep'}, n+n^{1-\vep'}} \geq D_{n,j}+D_{(n,j) \rightarrow v}.
\]
We consider the following two events. 
\begin{align*}
& \ca:= \left \{D_{n+n^{1-\vep'}, n+n^{1-\vep'}}-2 \left(n+n^{1-\vep'}\right) \geq xn^{1/3}\left (1- \vep \right) \right \}.\\
&\cb:=\left \{\min_{n \leq j \leq n+n^{\frac{2}{3}-\vep'}}\left(D_{(n,j) \rightarrow v}-2 \sqrt{n^{1-\vep'}(n+n^{1-\vep'} -j)}\right) \geq -\frac{\vep}{4}xn^{1/3}\right \}.
\end{align*}
Observe that by arguing similarly as before for all $n \leq j \leq n+n^{\frac{2}{3}-\vep'}$ there exists a constant $C>0$ independent of all the other parameters such that 
\[
2 \sqrt{nj}+2 \sqrt{n^{1-\vep'}(n+n^{1-\vep'} -j)} \geq 2(n+n^{1-\vep'} ) -Cn^{1/3}.
\]
 So, we can choose $x$ and $n$ sufficiently large depending on $\vep,\vep',C$ such that 
\[
 \left \{\max_{n \leq j \leq n+n^{\frac{2}{3}-\vep'}}\left(D_{n,j}-2 \sqrt{nj}\right) \geq x n^{1/3}\l( 1-\frac \vep2\r)\right \}\cap \cb \subset\ca  .
\]
We find upper bounds for $\P \left(\ca \right)$ and $\P \left(\cb^c \right)$. First observe that by Proposition \ref{p: upper tail estimates} for $n$ sufficiently large depending on $\vep,\vep'$ and $x$ sufficiently large depending on $\vep$ with $x \leq n^{1/20}$ we have
\[
\P \left(\ca \right) \leq \frac{1}{2}\exp \left(-\frac{4}{3} \l(1-\vep\r)x^{3/2} \right).
\]

Next we wish to find an upper bound for $\P \left( \cb^c\right).$ Note that for each $n \leq j \leq n+ n^{\frac{2}{3}-\vep'}$ we have again by Proposition \ref{p: lower tail estimates} for $x,n$ sufficiently large depending on $\vep'$ and $x \leq n^{1/20}$ we have,
\[
\P \left(D_{(n,j) \rightarrow v}-2 \sqrt{n^{1-\vep'}(n+n^{1-\vep'} -j)} \leq -\frac{\vep}{4}xn^{1/3}\right) \leq \exp \left(-c_{\vep} n^{\vep'} x^3 \right).
\]
Now, by taking a union bound over all $j$ we get 
\[
\P \left(\cb^c \right) \leq n\exp \left(-c_{\vep} n^{\vep'}x^3 \right).
\]
Thus, one can choose $n$ sufficiently large depending on $\vep, \vep'$ such that 
\[
\P \left(\cb^c \right) \ll \frac{1}{2}\exp\left(-\frac{4}{3}(1-\vep)x^{3/2} \right).
\]
This completes the proof.   
\end{proof}

\subsection{Proof of Proposition \ref{p:decorr}}
To prove Proposition \ref{p:decorr} we do the following construction which is similar to the construction in the proof of \cite[Theorem 3.8]{BB24}, where a limit theorem for the minima of Airy$_2$ process over growing intervals was proved. 

For $n\in \N$, consider the vertical intervals $I_{n}=\{1\}\times \l[n-n^{\frac {2}{3}+\frac {\varepsilon_1}{10}},n+n^{\frac 23+\frac {\varepsilon_1}{10}}\r]$ and $I'_{n}=\l\{n^{-\varepsilon_1/100}\r\}\times \l[n^{1-\frac{\varepsilon_1}{100}}-n^{\frac 23+\frac{\varepsilon_1}{10}}, n^{1-\frac{\varepsilon_1}{100}}+n^{\frac 23+\frac{\varepsilon_1}{10}}\r] $. Consider the parallelogram $P_{n}$ with opposite sides $I_{n}$ and $I'_{n}$. Define also the sub-interval $J_{n}$ of $I'_{n}$ given by $J_n=\l\{n^{-\varepsilon_1/100}\r\}\times \l[n^{1-\frac{\varepsilon_1}{100}}-n^{\frac 23-\frac{\varepsilon_1}{300}}, n^{1-\frac{\varepsilon_1}{100}}+n^{\frac 23-\frac{\varepsilon_1}{300}}\r]$ (see Figure \ref{fig:liminf upper bound}). 

For $v=(t,i)\in \R_{+}\times \N$, let us for brevity of notation set $f(v)=2\sqrt{ti}$. Let us set 
$$Y_{n}=\max_{v\in J_{n}} \l(f(v)+D^{P_n}_{v\to (1,n)}\r)$$
where $D^{P_n}_{v\to (1,n)}$ denotes the maximum passage time from $v$ to $(1,n)$ where the maximum is taken over all paths contained in $P_n$. Recall the definition of $u_k$ in Proposition \ref{p:decorr}. We shall show in the next couple of lemmas that if we set $X_{k}=Y_{u_k}$ for all $k$ sufficiently large, it satisfies the conditions in Proposition \ref{p:decorr}; thus completing the proof of the proposition. 
\qed
\begin{figure}[t!]
    \includegraphics[width=15 cm]{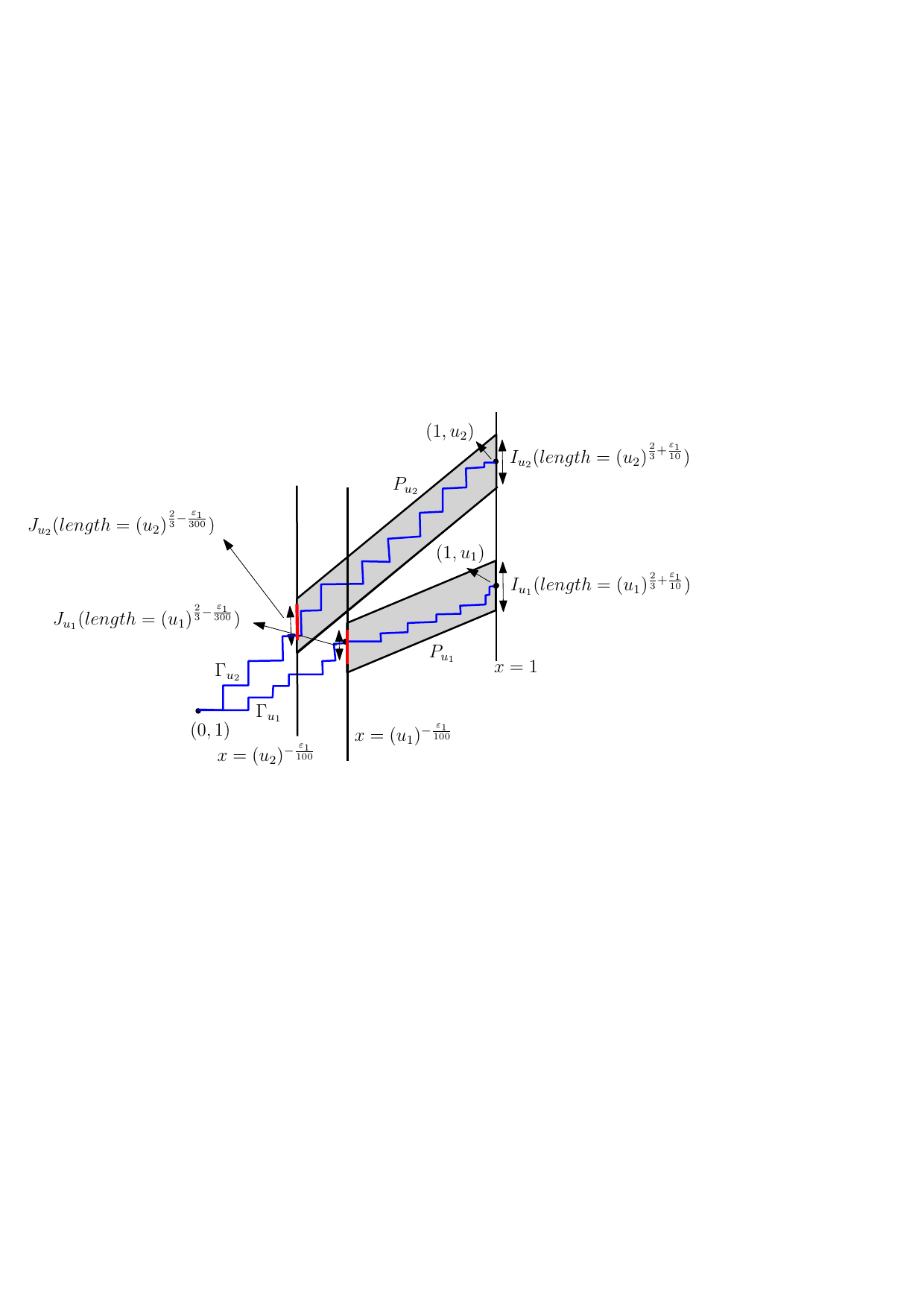}
    \caption{To prove Proposition \ref{p:decorr} we consider the points $(1,u_1),(1,u_2), \dots$ on the line $x=1$ where $u_k=k^{3+\vep_1}$. Since the parallelogram $P_{u_k}$ (shaded in grey) are disjoint Lemma \ref{l:indep} is immediate. Thus to prove the proposition it is sufficient to show Lemma \ref{l:approx}. To do this we fix a $k$ and construct the events $\ca,\cb$ and $\cc.$ On the event $\ca, \Gamma_{1, {u_k}}$ stays within the parallelogram $P_{u_k}$. By Lemma \ref{lemma: transversal fluctuation}, (i) $\ca^c$ has exponentially small (in $n$) probability. $\cb$ is the event that $\Gamma_{1,u_k}$ intersects the line $x=n^{-\frac{\vep_1}{100}}$ inside the interval $J_{u_k}$ (shaded in red). By Lemma \ref{lemma: transversal fluctuation}, (ii) $\cb^c$ has exponentially small (in $n$) probability. Finally, $\cc$ is the event that the maximum (resp.\ minimum) passage times between $(0,1)$ and $J_{u_k}$ are not too large (resp.\ not too small). By Proposition \ref{p: upper tail estimates} and Proposition \ref{p: lower tail estimates} and a union bound over $J_{u_k}$ we conclude that $\cc^c$ also has exponentially small (in $n$) probability. Now on the event $\ca \cap \cb \cap \cc,$ $Y_{u_k}$ and $D_{1,u_{k}}$ are close on the correct scale.}
   \label{fig:liminf upper bound}
 \end{figure}

\begin{lemma}
    \label{l:indep}
    There exists $k_0=k_0(\varepsilon_1)$ such that the random variables $Y_{u_{k}}$ are independent for $k>k_0$. 
\end{lemma}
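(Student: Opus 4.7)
The plan is to reduce mutual independence of $\{Y_{u_k}\}_{k > k_0}$ to a purely geometric statement: pairwise disjointness of the parallelograms $\{P_{u_k}\}_{k > k_0}$ in $\R \times \N$. The key observation is that $Y_n$ is a measurable function of the Brownian increments $\{B_i(t) - B_i(s) : (s, i), (t, i) \in P_n\}$, because every path contributing to $D^{P_n}_{v \to (1, n)}$ for some $v \in J_n$ is by definition contained in $P_n$, so every increment $B_i(t_i) - B_i(t_{i-1})$ appearing in the passage time has both arguments in the horizontal cross-section of $P_n$ at height $i$. If the $P_{u_k}$'s are pairwise disjoint as subsets of $\R \times \N$, then for each row $i$ the horizontal cross-sections at height $i$ are pairwise disjoint subintervals of $[0, 1]$; combined with independence across rows and the independent-increments property within each $B_i$, the $\sigma$-algebras $\mathcal{G}_k := \sigma\l(B_i(t) - B_i(s) : (s,i), (t,i) \in P_{u_k}\r)$ are mutually independent, and since each $Y_{u_k}$ is $\mathcal{G}_k$-measurable, the $\{Y_{u_k}\}_{k > k_0}$ are then mutually independent.

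The remaining task is to verify pairwise disjointness of $\{P_{u_k}\}$ for $k$ large. I would parametrize the vertical center of $P_n$ at horizontal coordinate $x \in [n^{-\varepsilon_1/100}, 1]$ as the linear function $c_n(x)$ interpolating between $c_n(n^{-\varepsilon_1/100}) = n^{1-\varepsilon_1/100}$ and $c_n(1) = n$, noting the half-width $n^{2/3+\varepsilon_1/10}$ is constant in $x$. A direct computation shows $c_n(x)$ is strictly increasing in $n$ for each fixed $x$, so if $P_{u_k}$ and $P_{u_{k+1}}$ are disjoint then $P_{u_k}$ and $P_{u_m}$ are disjoint for all $m > k$; hence it suffices to handle the consecutive pair. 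The tightest constraint for that pair occurs at $x = u_k^{-\varepsilon_1/100}$, the leftmost common horizontal coordinate, where the center separation is of order $(u_{k+1} - u_k)\, u_k^{-\varepsilon_1/100} \sim k^{2 + \varepsilon_1 - (3+\varepsilon_1)\varepsilon_1/100}$, while the combined half-width is of order $u_k^{2/3+\varepsilon_1/10} \sim k^{2 + (29/30)\varepsilon_1}$.

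The main subtlety is this delicate exponent comparison. For $\varepsilon_1$ small, the separation exponent exceeds the half-width exponent by roughly $\varepsilon_1/30 + O(\varepsilon_1^2)$, which is positive but small; so the ratio diverges as $k \to \infty$ and disjointness holds for all $k$ larger than some $k_0 = k_0(\varepsilon_1)$. The tightness of this margin explains both why the lemma requires $\varepsilon_1$ small and why the parameters $\varepsilon_1/100$ (governing the horizontal extent of the parallelogram) and $\varepsilon_1/10$ (governing its half-width) are chosen as they are: these are calibrated precisely so that the vertical separation $(u_{k+1}-u_k)\,u_k^{-\varepsilon_1/100}$ at the compressed small-$x$ end still dominates the sum of half-widths. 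No probabilistic tail estimate or LPP-specific input enters the proof — the lemma reduces entirely to the deliberate design of the $P_{u_k}$'s together with the independent-increments property of Brownian motion.
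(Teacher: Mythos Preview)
Your proposal is correct and follows exactly the paper's approach: the paper's proof is the two-line observation that the parallelograms $P_{u_k}$ are disjoint for large $k$ and that $Y_{u_k}$ depends only on the Brownian increments inside $P_{u_k}$, and you have simply supplied the details behind ``easy to check.'' One minor arithmetic slip: the leading-order margin in your exponent comparison is $\varepsilon_1/300$, not $\varepsilon_1/30$ (since $1 - \tfrac{29}{30} - \tfrac{3}{100} = \tfrac{1}{300}$), but this is still positive for small $\varepsilon_1$, so your conclusion stands.
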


\begin{proof}
    It is easy to check that for all $k$ sufficiently large the parallelograms $P_{u_k}$ are disjoint. Since $Y_{u_k}$ depends only on the Brownian increments in the parallelogram $P_{u_k}$, the claim follows. 
\end{proof}

\begin{lemma}
    \label{l:approx}
    For ${\vep_1}, \varepsilon_2>0$ small enough, there exists $\delta>0$ such that for all $n$ sufficiently large we have 
    $$\P\l(\l|Y_n-D_{1,n}\r|\ge \varepsilon_2 n^{-1/6}\r) \le \exp\l(-n^{\delta}\r).$$
\end{lemma}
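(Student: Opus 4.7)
The plan is to introduce three high-probability ``good'' events $\ca$, $\cb$, $\cc$, each with complementary probability at most $\exp(-n^{\delta'})$ for some $\delta' > 0$, and then argue deterministically that $|Y_n - D_{1,n}| \le \varepsilon_2 n^{-1/6}$ on their intersection. Concretely, I would let $\ca = \{\Gamma_{1,n} \cap \{t \ge n^{-\varepsilon_1/100}\} \subset P_n\}$; let $\cb$ be the event that $\Gamma_{1,n}$ meets the vertical line $\{t = n^{-\varepsilon_1/100}\}$ at some point $v^\star \in J_n$; and let $\cc$ be the event that $|D_{(0,1) \to v} - f(v)| \le \varepsilon_2 n^{-1/6}/2$ for every lattice point $v \in J_n$ (i.e., $v = (n^{-\varepsilon_1/100}, j)$ with $j$ an integer in the prescribed range).

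The three tail bounds go as follows. For $\ca^c$, Lemma \ref{lemma: transversal fluctuation}(i) applied with $\ell = n^{\varepsilon_1/10}$ yields $\P(\ca^c) \le \exp(-c n^{3\varepsilon_1/10})$. For $\cb^c$, Lemma \ref{lemma: transversal fluctuation}(ii) at the slice $t = n^{-\varepsilon_1/100}$ applies: the half-height of $J_n$ is $n^{2/3 - \varepsilon_1/300}$ while the natural local transversal scale is $(tn)^{2/3} = n^{2/3 - \varepsilon_1/150}$, so the relevant parameter is $\ell = n^{\varepsilon_1/300}$, giving $\P(\cb^c) \le \exp(-c' n^{\varepsilon_1/100})$. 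For $\cc^c$, I use the scaling identity $D_{(0,1) \to (s,j)} \stackrel{d}{=} \sqrt{s}\, D_{1,j}$ from Observation \ref{p: scaling}(i): with $s = n^{-\varepsilon_1/100}$ and $j$ comparable to $n^{1 - \varepsilon_1/100}$, the threshold $\varepsilon_2 n^{-1/6}/2$ translates, after multiplying by the Tracy-Widom scale $j^{1/6}$, into controlling $|(D_{1,j} - 2\sqrt{j}) j^{1/6}|$ at the scale $n^{\varepsilon_1/300}$. Propositions \ref{p: upper tail estimates} and \ref{p: lower tail estimates} bound the failure probability per point by $\exp(-c\, n^{\varepsilon_1/200})$ (the upper tail is the weaker side), and a union bound over the $O(n)$ integer levels in $J_n$ is easily absorbed. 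Any $\delta < \varepsilon_1/200$ then works.

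On $\ca \cap \cb \cap \cc$ the conclusion follows from standard concatenation. For the upper bound, subadditivity gives, for every $v \in J_n$,
\begin{equation*}
f(v) + D^{P_n}_{v \to (1,n)} \le f(v) + D_{v \to (1,n)} \le D_{1,n} + [f(v) - D_{(0,1) \to v}] \le D_{1,n} + \tfrac{\varepsilon_2}{2} n^{-1/6},
\end{equation*}
whence $Y_n \le D_{1,n} + \tfrac{\varepsilon_2}{2} n^{-1/6}$. For the lower bound, let $v^\star$ be the point supplied by $\cb$. Since $v^\star \in \Gamma_{1,n}$, the geodesic decomposes as $D_{1,n} = D_{(0,1) \to v^\star} + D_{v^\star \to (1,n)}$; by $\ca$, the segment of $\Gamma_{1,n}$ from $v^\star$ to $(1,n)$ is contained in $P_n$, so the constrained and unconstrained maxima coincide, $D^{P_n}_{v^\star \to (1,n)} = D_{v^\star \to (1,n)}$. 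Consequently $Y_n \ge f(v^\star) + D_{v^\star \to (1,n)} \ge D_{1,n} - \tfrac{\varepsilon_2}{2} n^{-1/6}$.

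The main subtlety I anticipate is the careful balancing of the exponents $\varepsilon_1/10$, $\varepsilon_1/100$, $\varepsilon_1/300$ appearing in the definitions of $P_n$ and $J_n$. The parallelogram $P_n$ must be wider (in height) than the global transversal fluctuation scale $n^{2/3}$ so that $\ca$ is typical; $J_n$ must be wider than the local transversal fluctuation scale at the short-time slice $t = n^{-\varepsilon_1/100}$ so that $\cb$ is typical; and yet $J_n$ must be narrow enough that -- combined with the gain $\sqrt{s} = n^{-\varepsilon_1/200}$ from working over a short window -- the tail estimates for $D_{(0,1) \to v}$ beat the Tracy-Widom normalization $j^{-1/6}$ by a polynomial factor, so that $\cc$ survives a union bound over $J_n$. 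It is also worth checking that the relevant Tracy-Widom-scale parameter $n^{\varepsilon_1/300}$ lies well inside the admissible range $x \le n^{1/10}$ of Propositions \ref{p: upper tail estimates}--\ref{p: lower tail estimates}, which is automatic for $\varepsilon_1$ small.
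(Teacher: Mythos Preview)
Your proposal is correct and follows essentially the same three-event decomposition ($\ca,\cb,\cc$) and concatenation argument as the paper. The only notable difference is cosmetic: the paper defines $\ca$ as the event that every geodesic from $v\in J_n$ to $(1,n)$ lies in $P_n$ (and then bounds $\P(\ca^c)$ via Lemma~\ref{lemma: transversal fluctuation}(i) with a union bound over $v$), whereas you require only the single geodesic $\Gamma_{1,n}$ to stay in $P_n$ past the slice --- your version is sufficient for the argument and slightly cleaner, since subpath optimality already guarantees that the segment of $\Gamma_{1,n}$ from $v^\star$ to $(1,n)$ is itself a geodesic.
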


\begin{proof}
    Consider the event $\ca$ that for all $v\in J_n$ the geodesic from $v$ to $(1,n)$ is contained in $P_n$. Next, consider the event 
    $$\cb=\l\{\Gamma_{1,n}\l(n^{-\varepsilon_1/100}\r) \subseteq J_{n}\r\},$$
    i.e., the geodesics $\Gamma_{1,n}$ intersects the line $x=(n^{-\varepsilon_1/100})$ within the interval $J_n$. Finally, consider the event 
    $$\cc=\left \{\max_{v\in J_n}: |D_v-f(v)|\le \varepsilon_2 n^{-1/6} \right \}.$$
    Clearly, on the event $\ca\cap \cb \cap \cc$ we have 
    $|D_{1,n}-Y_n|\le \varepsilon_2 n^{-1/6}$. This is because on $\ca \cap \cb \cap \cc$ there exist $v \in J_n$ such that 
    \[
    D_{1,n}=D_v+D^{P_n}_{v \rightarrow (1,n)}.
    \]
    Hence, 
    \[
    D_{1,n} \leq D_v-f(v)+Y_n \leq \max_{v \in J_n}|D_v-f(v)|+Y_n.
    \]
    Also by superadditivity for all $v \in J_n$
    \[
    D_{1,n} \geq D_v+D^{P_n}_{v\rightarrow (1,n)}.
    \]
    Hence, 
    \[
    D_{1,n} \geq -\max_{v \in J_n}|D_v-f(v)|+Y_n.
    \]
    This gives us the desired upper bound for $|D_{1,n}-Y_n|.$ So it suffices to show 
    $\P(\ca^c\cup \cb^c \cup \cc^c)\le \exp(-n^{\delta})$. We first start with the event $\cc^c.$ 
    \[
    \P \left(\cc^c \right) \leq \P \left(\max_{v \in J_n}\left(D_v-f(v) \right) \geq \vep_2n^{-1/6} \right)+\P \left(\min_{v \in J_n}\left(D_v-f(v) \right) \leq -\vep_2n^{-1/6} \right).
    \]
    For the first probability on the right side note that By Proposition \ref{p: upper tail estimates} and a union bound we get there exists $c>0$ such that for $n$ sufficiently large depending on $\vep_1,\vep_2$
    \[
    \P \left(\max_{v \in J_n}\left(D_v-f(v) \right) \geq \vep_2n^{-1/6} \right) \leq 2 n\exp \left(-c n^{\delta} \right)
    \]
    Similarly, by Proposition \ref{p: lower tail estimates} we get there exists $c>0$ such that for all $n$ sufficiently large depending on $\vep_1,\vep_2$
    \[
    \P \left(\min_{v \in J_n}\left(D_v-f(v) \right) \leq -\vep_2n^{-1/6} \right) \leq 2n\exp\left(-c n^{\delta} \right).
    \]
    Combining these we get upper bound for $\P \left(\cc^c \right).$ For the event $\ca^c, $ note that by Lemma \ref{lemma: transversal fluctuation}, (i) we get for $n$ sufficiently large $\P \left(\ca^c \right) \leq n\exp\l(-cn^{\frac{3}{10}\vep_1}\r)$. Finally, applying Lemma \ref{lemma: transversal fluctuation}, (ii) we get $\P \left( \cb^c\right) \leq \exp \left(-n^{\frac{\vep_1}{100} }\right).$ Combining all the upper bounds the proof is completed.
      \end{proof}

\subsection{Proof of Lemma \ref{lemma: transversal fluctuation}}
We now proceed with the remaining proof of the transversal fluctuation estimates in Lemma \ref{lemma: transversal fluctuation}.

\begin{proof}[Proof of Lemma \ref{lemma: transversal fluctuation}] (i) Note that if the event $\{\mathsf{TF}_n \geq \ell n^{2/3} \}$ happens then then there exists $t_0 < 1$ such that $(t_0,t_0n +xn^{2/3}) \in \Gamma_{1,n}$ where $|x| \geq \ell.$ By Observation \ref{p: scaling}, (ii) 
\begin{align*}
&\P \left(\left \{(t_0, t_0n +xn^{2/3}) \in \Gamma_{1,n} \text{ for some } t_0 <1 \text{ and } |x| \geq \ell \right\}\right)\\&=\P \left(\left \{(t_0n,  t_0n+xn^{2/3}) \in \Gamma_{n,n} \text{ for some } t_0<1 \text{ and }|x| \geq \ell \right\}\right).
\end{align*}
By \cite[Corollary 1.5]{GanHam23} the probability on the right hand side is at most $\exp(-c \ell^3)$ for some $c>0$ and $n, \ell$ sufficiently large with $\ell \leq n^{1/10}$. This completes the proof of the first part.\\
\begin{figure}[t!]
    \includegraphics[width=15 cm]{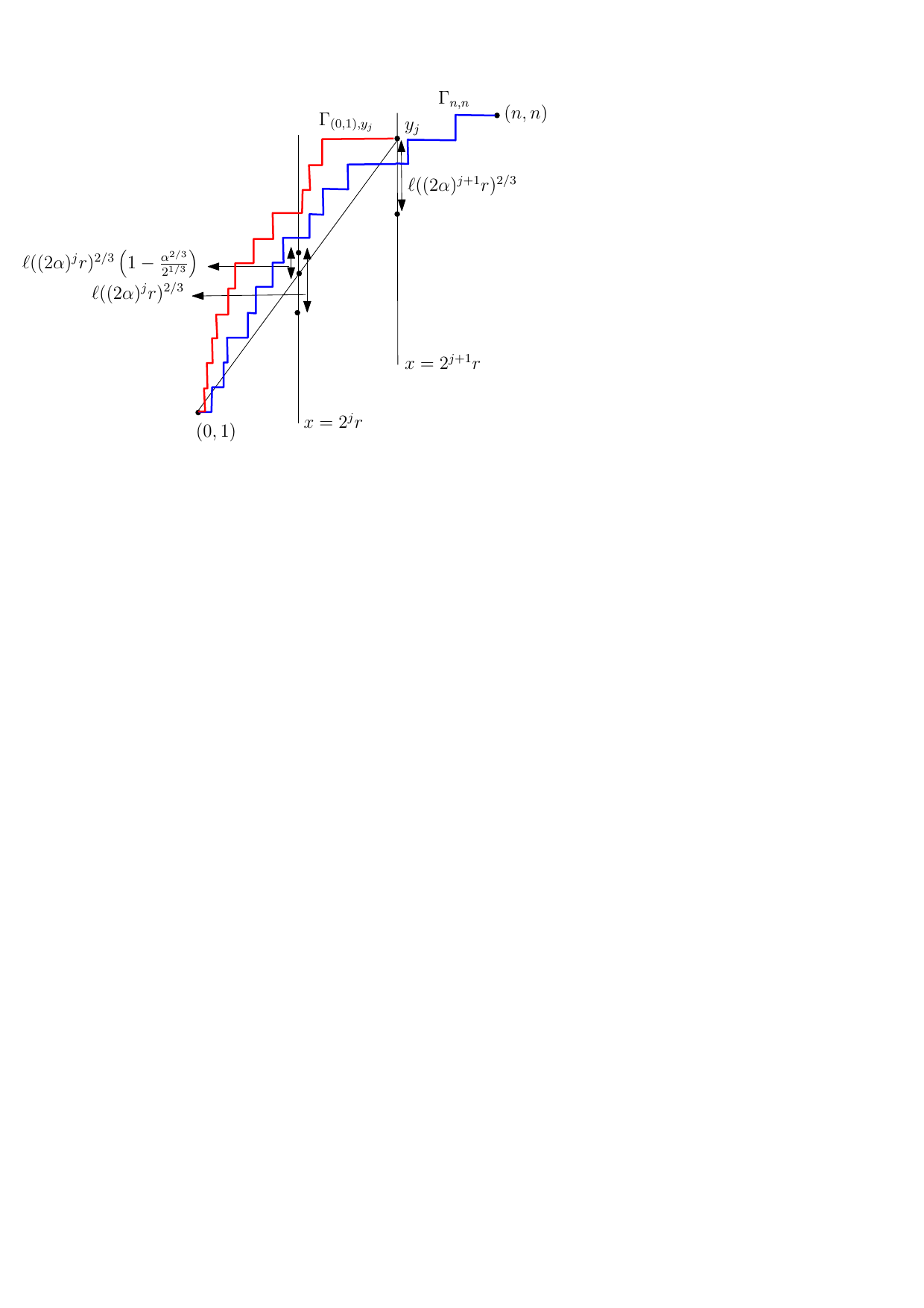}
    \caption{To prove Lemma \ref{lemma: transversal fluctuation}, (ii) we consider a union of events $\mathcal{A}_j$'s on which the distance between $\Gamma_{n,n}(2^jr)$ and $2^jr$ is at least $\ell ((2 \alpha)^jr)^{2/3}$ and the distance between $\Gamma_{n,n}(2^{j+1}r)$ and $2^{j+1}r$ is at most $\ell ((2 \alpha)^{j+1}r)^{2/3}$. Using the first part of the lemma it is enough to consider $\bigcup_{j} \ca_j.$ We fix $j$ and show that on the event $\ca_j,$ the geodesic between $(0,1)$ and $y_j$ has large global transversal fluctuation from the straight line joining $(0,1)$ and $y_j$. Thus, applying first part of the lemma to each of the $\ca_j$'s and applying a union bound gives us the desired upper bound.}
    \label{fig:local transversal fluctuation}
\end{figure}
(ii) Note that by Observation \ref{p: scaling}, (ii) we have 
\[
\P \left(\left \{\left (t, tn+x (tn)^{2/3} \right) \in \Gamma_{1,n} \text{ for some } |x| \geq \ell \right \}\right)=\P \left(\left \{\left (tn, tn+x (tn)^{2/3} \right) \in \Gamma_{n,n} \text{ for some } |x| \geq \ell \right \}\right).
\]
Thus it is enough to prove the following: there exists $c>0$ such that for all $n, \ell,r$ sufficiently large with $r \leq n$, 
\[
\P \left(\left \{\left (r, r+x r^{2/3} \right) \in \Gamma_{n,n} \text{ for some } |x| \geq \ell \right \}\right) \leq \exp(-c \ell ^3).
\] The proof idea will be same as \cite[Proposition 2.1]{BBB23}. We prove the above for $x>0$. The argument for the negative $x$ case follows similarly. Further note that when $r \geq \frac n2,$ then the upper bound follows from the first part. So, we only need to consider the case when $r \leq \frac n2.$
For a fixed $1 < \alpha <\sqrt{2}$, we define the following events. For all $0 \leq j \leq \lceil \log_2 \left(\frac nr \right)\rceil-1,$ let (see Figure \ref{fig:local transversal fluctuation})
\[
\ca_j:=\left \{\Gamma_{n,n}\left(2^jr \right)-2^jr \geq \ell ((2\alpha)^j r)^{2/3} \text{ and } \Gamma_{n,n}(2^{j+1}r)-2^{j+1}r \leq \ell ((2 \alpha)^{j+1}r)^{2/3}\right \}
\]
Let $\ca:=\bigcup_{j}\ca_j.$ Note that 
\[
\ca^c \cap  \left \{ \Gamma_{n,n}(r)-r \geq \ell r^{2/3}\right\} \subset \left \{ \Gamma_{n,n}\left(2^jr \right)-2^jr \geq \ell ((2\alpha)^j r)^{2/3} \text{ for all } j\right \}
\]
Let $j_0=\lceil \log_2 \left(\frac nr \right)\rceil-1.$ Then by the first part, and by the choice of $j_0$ the event\\ $\left \{ \Gamma_{n,n}\left(2^{j_0}r \right)-2^{j_0}r \geq \ell ((2\alpha)^{j_0} r)^{2/3}\right \}$ has probability at most $\exp(-c \ell^3)$ for all sufficiently large $n.$ Thus, it is enough to find an upper bound for $\P\left(\ca \right).$ We fix a $j.$ Consider the point $y_j=(2^{j+1}r,2^{j+1}r+\ell \left((2\alpha)^{j+1}r\right)^{2/3})$. Then we see that by Observation \ref{o: ordering} if the event $\ca_j$ happens then it must happen that  $\Gamma_{(0,1),y_j}(2^jr)-2^jr \geq \ell \left((2 \alpha)^jr\right)^{2/3}$. Otherwise let $v=(2^{j+1}r, m)$ be the random point in $\Gamma_{n,n}$. Then there will be some $m_0 \leq m$ and $0 \leq x, y \leq 2^{j+1}r $ such that $(x,m_0) \in \Gamma_{(0,1),y_j}$ and $(y,m_0) \in \Gamma_{(0,1),v}$ but $x>y$. This is a contradiction to Observation \ref{o: ordering} (see Figure \ref{fig:local transversal fluctuation}). Hence, 
\begin{align*}
&\ca_j \subset \left \{\Gamma_{(0,1),y_j}(2^jr)-2^jr \geq \ell \left((2 \alpha)^jr  \right)^{2/3} \right \}\\
&= \left \{\Gamma_{(0,1),y_j}(2^j r)-\left(2^jr+\frac{\ell ((2\alpha)^{j+1}r)^{2/3}}{2} \right) \geq \ell ((2 \alpha)^jr)^{2/3}\left(1-\frac{\alpha^{2/3}}{2^{1/3}} \right)  \right\}.
\end{align*}
As $\alpha < \sqrt{2} $ then we are in the set-up of the first part. Now, if we choose $r \geq \max\{n_0, \ell_0^{1/10}\}, \ell \geq \ell_0$ and $\ell \leq r^{1/10}$ then for all $j$ the conditions in the first part are satisfied. Hence, we get there is $c>0$ such that for all $j,$ and $\ell \leq r^{1/10}$ sufficiently large
\[
\P (\ca_j) \leq \exp(-c\ell^3 \alpha^{2j}).
\]
Hence, 
\[
\P \left(\ca \right) \leq \sum_{j} \exp \left(-c\ell^3 \alpha^{2j} \right).
\]
Finally, as $\alpha>1$ the right hand side is smaller than $\exp(-c\ell^3).$ This completes the proof.\end{proof}

\section{Law of fractional logarithm in Poissonian LPP}
\label{s:lisfull}

In this section we prove Proposition \ref{l:LFL for sequence}. Before going into the proof we first state some auxiliary estimates for Poissonian LPP.
\subsection{Auxiliary Estimates:}
We first begin with the sharp tail estimates for $\cl_{1,t}.$
\begin{theorem}{\cite[Theorem 1.3]{LM01}}\label{thm: mod dev upper tail}
For any fixed $\delta,\vep>0$, there exist $t_{\vep,\delta}$ and $x_{\vep,\delta}$ such that for all $t\geq t_{\vep,\delta}$ and $x_{\vep,\delta}\leq x\leq t^{\frac{1}{3}-\delta}$, we have
\begin{align}\label{eq: mod dev upper tail}
    \exp\l(-\frac{4}{3}(1+\vep)x^{3/2}\r)\leq \P\l(\l(\mathcal{L}_{1,t}-2\sqrt{t}\r)t^{-1/6}\geq x\r)\leq \exp\l(-\frac{4}{3}(1-\vep)x^{3/2}\r).
\end{align}
\end{theorem}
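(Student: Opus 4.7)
The plan is to prove the upper-tail upper and lower bounds separately. Since $(\cl_{1,t}-2\sqrt{t})t^{-1/6}$ converges in distribution to $\mbox{TW}_2$ by the Baik--Deift--Johansson theorem, and since the known $\mbox{TW}_2$ upper tail is $\exp(-\tfrac{4}{3}x^{3/2}(1+o(1)))$, the target constant $\tfrac{4}{3}$ is precisely the Tracy--Widom rate. The task is therefore to upgrade this weak limit to a quantitative moderate-deviation statement, uniform in $x\in[x_{\vep,\delta},t^{1/3-\delta}]$.

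For the upper bound on $\P(\cl_{1,t}\ge L)$ with $L=\lceil 2\sqrt{t}+xt^{1/6}\rceil$, the natural tool is the Borodin--Okounkov--Olshanski Fredholm determinant formula $\P(\cl_{1,t}\le L-1)=\det(I-K_t|_{\{L,L+1,\ldots\}})$, where $K_t$ is the discrete Bessel kernel. The first-order cumulant expansion gives
\begin{equation*}
-\log\det(I-K_t|_{\{L,\ldots\}}) \ge \tr K_t|_{\{L,\ldots\}}-\tfrac{1}{2}\tr K_t^2|_{\{L,\ldots\}},
\end{equation*}
and a steepest-descent analysis of the contour-integral representation of the Bessel function entries of $K_t$, carried out in the moderate-deviation regime, produces the exponent $-\tfrac{4}{3}x^{3/2}(1+o(1))$ with uniform error control in the stated range of $x$. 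The restriction $x\le t^{1/3-\delta}$ is exactly the scale below which the two saddle points remain well separated; beyond it one enters the true large-deviation regime, where the rate function departs from $\tfrac{4}{3}x^{3/2}$.

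For the lower bound one can again use the BOO formula, either by combining a lower bound on the leading trace with upper bounds on the higher-order traces, or via a probabilistic construction that avoids the determinantal machinery. The latter route goes as follows. Fix a thin parallelogram $T_w$ of width $w$ along the diagonal of $[0,1]\times[0,t]$ and partition it into $L$ sub-parallelograms of area $wt/L$ each. If every sub-parallelogram contains at least one Poisson point then an increasing chain of length $L$ exists inside $T_w$, and the probability of this event is $(1-e^{-wt/L})^L$. Choosing $w$ to optimise this expression in the regime $L=2\sqrt{t}+xt^{1/6}$ yields a lower bound of the desired form $\exp(-\tfrac{4}{3}(1+\vep)x^{3/2})$.

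The main technical obstacle lies in the upper bound: obtaining quantitative steepest-descent error estimates that remain uniformly valid across the entire moderate-deviation window. Near the edge $x\sim t^{1/3-\delta}$, the coalescence of the saddle points causes the standard Bessel asymptotics to degrade, and the error terms must be tracked explicitly; this is the precise reason for restricting to $x\le t^{1/3-\delta}$. A possible alternative, which I would actually prefer for the sake of elementarity, is to bypass the Riemann--Hilbert analysis in the spirit of L\"owe--Merkl, by combining the BDJ weak limit (used only as an input) with Talagrand's convex-distance concentration inequality for $\cl_{1,t}$, which provides fluctuation control of order $\sqrt{\E[\cl_{1,t}]}$, and then interpolating via superadditivity (using independence of Poisson increments on disjoint rectangles) to obtain matching upper and lower moderate-deviation bounds with the sharp constant $\tfrac{4}{3}$.
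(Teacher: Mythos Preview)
The paper does not prove this statement at all: it is quoted verbatim as \cite[Theorem 1.3]{LM01} and used as a black-box input. So there is nothing in the paper to compare your argument against beyond the citation itself.

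That said, a few comments on your sketch. The determinantal route you outline (discrete Bessel kernel, trace expansion, steepest descent on the integral representation) is indeed the approach behind the L\"owe--Merkl result, which ultimately rests on the Baik--Deift--Johansson asymptotic analysis. Your identification of the range $x\le t^{1/3-\delta}$ with the regime where the two saddles stay apart is correct.

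However, your alternative probabilistic lower bound via a thin parallelogram is a genuine gap if taken as a substitute for the analytic argument. For points in consecutive sub-parallelograms to be automatically coordinatewise ordered, the horizontal width $w$ must be at most the horizontal step $1/L$, forcing each sub-parallelogram to have area at most $t/L^2$. With $L=2\sqrt t+xt^{1/6}$ this area is bounded (roughly $1/4$), so $(1-e^{-wt/L})^L$ is at best $\exp(-c\sqrt t)$, not $\exp(-\tfrac{4}{3}(1+\vep)x^{3/2})$. No optimisation over $w$ rescues this; the planting construction misses the sharp constant by an unbounded factor and in fact gives the wrong order in the exponent. The sharp $\tfrac{4}{3}$ in the lower bound genuinely requires the determinantal/RHP input. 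Similarly, Talagrand concentration plus superadditivity will give some exponent, but extracting the exact constant $\tfrac{4}{3}$ from that combination without feeding in the BDJ asymptotics is not something that has been done, and your sketch does not indicate how it would go.
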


 \begin{theorem}{\cite[Theorem 1.2]{LMS02}}\label{thm: mod dev lower tail}
 For any fixed $\delta,\vep>0$, there exist $t_{\vep,\delta}$ and $x_{\vep,\delta}$ such that for all $t\geq t_{\vep,\delta}$ and $x_{\vep,\delta}\leq x\leq t^{\frac{1}{3}-\delta}$, we have
 \begin{align}\label{eq: mod dev lower tail}
     \exp\l(-\frac{1}{12}(1+\vep)x^{3}\r)\leq \P\l(\l(\mathcal{L}_{1,t}-2\sqrt{t}\r)\lambda^{-1/6}\leq -x\r)\leq \exp\l(-\frac{1}{12}(1-\vep)x^{3}\r).
 \end{align}
\end{theorem}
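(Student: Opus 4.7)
The plan is to establish the sharp lower tail estimate for $\cl_{1,t}$ by exploiting the determinantal structure of the Poissonized Plancherel measure, which is the Poissonian-LPP counterpart of the GUE minor structure used in \cite{BBBK24} for Brownian LPP. Via RSK and the Baik--Deift--Johansson/Borodin--Okounkov--Olshanski framework, the distribution of $\cl_{1,t}$ is governed by a determinantal point process on $\Z + \tfrac12$ with the discrete Bessel kernel $J_t$; in particular, $\P(\cl_{1,t} \le m) = \det(I - J_t)_{\ell^2(\{m+\tfrac12, m+\tfrac32, \ldots\})}$. All the asymptotics I will use flow from this single formula.

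First I would derive the upper bound on the probability, $\P((\cl_{1,t} - 2\sqrt{t})t^{-1/6} \le -x) \le \exp(-\tfrac{1}{12}(1-\vep)x^3)$, by a steepest-descent analysis of this Fredholm determinant. Using the standard double-contour representation of $J_t$, one rescales coordinates by $t^{1/6}$ near the spectral edge $2\sqrt{t}$; the resulting limit is the Airy kernel, and the Fredholm determinant converges to the GUE Tracy--Widom distribution $\text{TW}_2$. The sharp constant $\tfrac{1}{12}$ is already visible in the lower tail $\P(\text{TW}_2 \le -a) = \exp(-\tfrac{a^3}{12}(1+o(1)))$ as $a \to \infty$, so the task reduces to making the Airy comparison uniform in $x$ across the moderate-deviation range $x \le t^{1/3-\delta}$, which is done by controlling the tails of the contour integrals via Gaussian-type bounds on the steepest-descent contours.

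For the matching $\exp(-\tfrac{1}{12}(1+\vep)x^3)$ lower bound on this probability, I would use an explicit slab decomposition combined with superadditivity. Partition $[0,t]$ into $k$ disjoint horizontal slabs $R_j = [0,1] \times [(j-1)t/k, jt/k]$. By Poisson independence across disjoint regions, the passage times $\cl_{R_j}$ are i.i.d.\ copies of $\cl_{1,t/k}$, and by superadditivity $\cl_{1,t} \ge \sum_j \cl_{R_j}$. Forcing each $\cl_{R_j}$ to fall below its mean by a controlled amount therefore forces $\cl_{1,t}$ to be small; tuning $k$ and the per-slab deficit, and applying the TW$_2$-regime lower tail to each slab (where the constant $\tfrac{1}{12}$ is already sharp), the product probability becomes $\exp(-\tfrac{1}{12}(1+\vep)x^3)$ after optimization.

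The main obstacle in both directions is obtaining the sharp constant $\tfrac{1}{12}$ uniformly throughout the window $x \le t^{1/3-\delta}$: for the upper bound, this requires uniform steepest-descent control of the discrete Bessel kernel with Airy-type error estimates that do not degrade as $x$ grows polynomially in $t$; for the lower bound, it requires the one-point tails fed into the slab construction to already be sharp at the deviation scale used, which brings one back to a similar asymptotic input. An alternative route worth exploring is to transfer the Brownian-LPP sharp tail from Proposition \ref{p: lower tail estimates} via a coupling of Poissonian and Brownian LPP at scale $t^{2/3}$, though the construction of such a coupling preserving sharp constants is nontrivial and would need to be developed separately.
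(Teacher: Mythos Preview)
The paper does not prove this statement at all: it is quoted verbatim from \cite[Theorem 1.2]{LMS02} and used as a black box, so there is no ``paper's own proof'' to compare against. Your proposal is therefore an attempt to reprove an external result.

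That said, your argument for the lower bound $\P\bigl((\cl_{1,t}-2\sqrt t)t^{-1/6}\le -x\bigr)\ge \exp\bigl(-\tfrac{1}{12}(1+\vep)x^3\bigr)$ contains a genuine error. You invoke superadditivity $\cl_{1,t}\ge \sum_j \cl_{R_j}$ over disjoint horizontal slabs and then claim that forcing each $\cl_{R_j}$ to be small forces $\cl_{1,t}$ to be small. The inequality points the wrong way: a lower bound on $\cl_{1,t}$ in terms of $\sum_j \cl_{R_j}$ gives you no control on $\cl_{1,t}$ from above when the summands are small. Slab/superadditivity arguments are naturally suited to making $\cl_{1,t}$ \emph{large} (lower bounds on upper-tail probabilities) or to bounding how often it can be large (upper bounds on upper-tail probabilities), but not to forcing it to be small. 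To get a lower bound on the lower-tail probability with the sharp constant $\tfrac{1}{12}$, one genuinely needs an \emph{upper} bound on $\cl_{1,t}$ under some event of controllable probability; the actual proof in \cite{LMS02} proceeds instead through the exact Fredholm-determinant/orthogonal-polynomial representation and a Riemann--Hilbert/steepest-descent analysis on both sides, not through a probabilistic decomposition. Your upper-bound sketch via the discrete Bessel kernel and Airy asymptotics is the correct route in spirit, and essentially that same analytic machinery is what supplies the matching lower bound as well.
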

Next we state the scaling of Poissonian LPP.
\begin{observation}{(Poissonian Scaling)}
\label{o:poissonian scaling}
For all $s>0,$
\[
\left \{\cl_{1,t}\right \}_{t \geq 0} \overset{d}{=}\left \{\cl_{s,\frac ts} \right \}_{t \geq 0}.
\]
\end{observation}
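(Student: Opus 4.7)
The plan is to exhibit an explicit volume-preserving linear bijection of $\mathbb{R}^2$ that, on the one hand, maps a rate $1$ Poisson process to a rate $1$ Poisson process in distribution, and on the other hand transforms up-right paths from $(0,0)$ to $(1,t)$ into up-right paths from $(0,0)$ to $(s,t/s)$. Since last passage times depend only on the point configuration along up-right paths and the cardinality is preserved under bijections of the underlying point set, the claimed equality in distribution will follow at the level of joint distributions, not just marginals.

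Concretely, I would proceed as follows. For $s>0$ fixed, define $\phi_s:\mathbb{R}^2\to\mathbb{R}^2$ by $\phi_s(x,y)=(sx,y/s)$. First, observe that $\phi_s$ is a linear bijection with Jacobian determinant equal to $1$, so it preserves two-dimensional Lebesgue measure. By the standard characterization of Poisson processes via intensity measures (or equivalently, by checking independence and Poisson marginals on disjoint Borel sets), the image $\phi_s(\Pi)$ of the rate $1$ Poisson point process $\Pi$ on $\mathbb{R}^2$ is again a rate $1$ Poisson point process, i.e.\ $\phi_s(\Pi)\stackrel{d}{=}\Pi$.

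Second, I would note that $\phi_s$ preserves the coordinate-wise partial order on $\mathbb{R}^2$, since both coordinate maps $x\mapsto sx$ and $y\mapsto y/s$ are strictly increasing for $s>0$. Consequently, a piecewise linear path $\gamma$ is up-right from $(0,0)$ to $(1,t)$ through points $p_1,\ldots,p_k\in\Pi$ if and only if $\phi_s(\gamma)$ is up-right from $(0,0)=\phi_s(0,0)$ to $(s,t/s)=\phi_s(1,t)$ through the points $\phi_s(p_1),\ldots,\phi_s(p_k)\in\phi_s(\Pi)$. Since the number of collected Poisson points is preserved, we obtain the deterministic identity
\[
\mathcal{L}_{1,t}(\Pi)=\mathcal{L}_{s,t/s}(\phi_s(\Pi)) \quad \text{for all } t\ge 0,
\]
where I have momentarily displayed the dependence on the underlying point configuration. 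Combining this with $\phi_s(\Pi)\stackrel{d}{=}\Pi$ yields the claimed equality of processes indexed by $t\ge 0$.

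I do not anticipate a genuine obstacle here; the only point that requires a moment of care is verifying that $\phi_s(\Pi)$ has the same law as $\Pi$ as a random point measure on $\mathbb{R}^2$, which reduces to the standard mapping theorem for Poisson processes together with the fact that $\phi_s$ preserves Lebesgue measure. Once this is in hand, the equality of finite-dimensional distributions of the two processes follows directly from the coupling given by $\phi_s$, since both sides are measurable functionals of the Poisson point process evaluated at $\phi_s$-related configurations of endpoints.
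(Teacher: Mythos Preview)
Your argument is correct. The paper does not supply a proof of this observation; it is simply stated as a fact, so there is nothing to compare against. Your approach via the area-preserving linear map $\phi_s(x,y)=(sx,y/s)$, together with the mapping theorem for Poisson processes and the fact that $\phi_s$ preserves the coordinatewise order, is the standard and natural justification; it matches in spirit the Brownian-scaling argument the paper gives for the analogous Observation~\ref{p: scaling}.
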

Now we define the notion of geodesics in Poissonian LPP. For any fixed pair of coordinate wise ordered points $a=(a_1,a_2)<b=(b_1,b_2)$, we consider piecewise linear paths joining coordinate wise ordered points $u_0=a, u_1,u_2,\ldots u_{k-1},u_k=b$ where each $u_1,u_2,\ldots, u_{k-1}$ is a point in the Poisson process $\Pi$. Since for a Poisson process on $\R^2$, on any vertical line there will not be more than one Poisson point, any increasing path $\gamma$ between $a=(a_1,a_2)<b=(b_1,b_2)$ can be considered as an increasing function on $[a_1,b_1]$ (i.e., $(x,\gamma(x))$ denotes the unique point on $\gamma$ with first coordinate $x$). Between any pair of ordered points in $\R^2$, increasing paths with maximal number of Poisson points exist, although they may not be unique. Such paths will be called geodesics. For $a,b \in \R^2 $ with $a=(a_1,a_2)<b=(b_1,b_2), \Gamma_{a,b}:=\{(x,\Gamma_{a,b}(x)): x \in [a_1,b_1] \}$ will be used to denote some maximal increasing path from $a$ to $b$. We state the following estimate for transversal fluctuations of geodesics. For any $r>0$ denote 
\[
D(r):=\sup\{|\Gamma_{r,r}(x)-x|: x \in [0,r], \Gamma_{r,r} \text{ is a geodesic between } (0,0) \text{ to } (r,r)\}.
\] 
We have the analogue of Lemma \ref{lemma: transversal fluctuation}.
\begin{proposition}
\label{t:transversal fluctuation in diagonal direction}We have the following: 
    \begin{enumerate}[label=(\roman*), font=\normalfont]
    \item  There exist $r_0,\ell_0$ and $c>0$ such that for all $r \geq r_0, \ell \geq \ell_0$ we have\[
    \P(D(r)\geq \ell r^{2/3}) \leq e^{-c\ell}.
    \]
    \item Let $r_0,\ell_0$ be as above. Then there exists $c'>0$ such that for all $r \geq r_0, \ell\geq \ell_0$ and for all $t \in [0,r]$ with $t \geq r_0$
    \[
    \P(|\Gamma_{r,r}(t)-t| \geq \ell t^{2/3} \text{ for some geodesic } \Gamma_{r,r} \text{ between } (0,0) \text{ and } (r,r)) \leq e^{-c'\ell}.
    \]
    \end{enumerate}
    \end{proposition}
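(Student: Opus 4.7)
The plan is to prove Proposition \ref{t:transversal fluctuation in diagonal direction} in two steps, paralleling the strategy used for Lemma \ref{lemma: transversal fluctuation}.

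For part (i), the approach is an exit-point / barrier argument. If $D(r)\ge \ell r^{2/3}$, then some geodesic $\Gamma_{r,r}$ must pass through a point of the form $v=(s,s+y)$ with $|y|\ge \ell r^{2/3}$ for some $s\in[0,r]$. Discretising the set of candidate exit points $v$ at scale $r^{2/3}$ (using the monotonicity of $\mathcal{L}_{\cdot,\cdot}$ in its endpoints to transfer between $v$ and nearby grid points), super-additivity gives
\begin{align*}
\mathcal{L}_{r,r}\;\le\;\mathcal{L}_{(0,0)\to v}\;+\;\mathcal{L}_{v\to (r,r)}.
\end{align*}
The parabolic curvature of the limit shape shows that the typical value of the right-hand side is smaller than the typical value $\approx 2r$ of $\mathcal{L}_{r,r}$ by an amount of order $\ell^2 r^{1/3}$. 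Hence either $\mathcal{L}_{r,r}$ deviates downward by $\Omega(\ell^2 r^{1/3})$, or one of the two constituent passage times deviates upward by $\Omega(\ell^2 r^{1/3})$; in scaled units this is $\Omega(\ell^2)$. Applying the upper-tail estimate of Theorem \ref{thm: mod dev upper tail} and the lower-tail estimate of Theorem \ref{thm: mod dev lower tail} with $x\asymp \ell^2$ (valid in the moderate-deviation range) and taking a union bound over the $O(r^{1/3})$ grid points gives the claimed $e^{-c\ell}$ bound. In fact stronger decay holds in the moderate range, and the linear-in-$\ell$ rate is chosen so that the statement also covers the large-$\ell$ regime (where one falls back on a cruder bound from the number of Poisson points in elongated rectangles).

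For part (ii), I would adapt the dyadic chaining in the proof of Lemma \ref{lemma: transversal fluctuation}, (ii). Fix $1<\alpha<\sqrt 2$ and, for $0\le j\le \lceil \log_2(r/t)\rceil-1$, set
\begin{align*}
\mathcal{A}_j:=\bigl\{|\Gamma_{r,r}(2^j t)-2^j t|\ge \ell((2\alpha)^j t)^{2/3},\;\;|\Gamma_{r,r}(2^{j+1}t)-2^{j+1}t|\le \ell((2\alpha)^{j+1}t)^{2/3}\bigr\}.
\end{align*}
If $|\Gamma_{r,r}(t)-t|\ge \ell t^{2/3}$ and none of the $\mathcal{A}_j$ occur, then at the topmost scale $j_0=\lceil\log_2(r/t)\rceil-1$ (so $2^{j_0}t\asymp r$) the geodesic has transversal fluctuation $\ge \ell((2\alpha)^{j_0}t)^{2/3}$; by part (i) this event has probability at most $\exp(-c\ell\alpha^{2j_0/3})\le e^{-c'\ell}$. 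On $\mathcal{A}_j$, the ordering of geodesics (the Poissonian analogue of Observation \ref{o: ordering}, via standard coalescence-type arguments, using a consistent choice of geodesic to handle non-uniqueness) forces the geodesic from $(0,0)$ to $y_j:=(2^{j+1}t,\,2^{j+1}t+\ell((2\alpha)^{j+1}t)^{2/3})$ to deviate from the straight line joining its endpoints by at least $(1-\alpha^{2/3}/2^{1/3})\ell((2\alpha)^j t)^{2/3}$ at abscissa $2^j t$. Applying part (i) to the rectangle spanned by $(0,0)$ and $y_j$ yields $\P(\mathcal{A}_j)\le \exp(-c\ell\alpha^{2j/3})$, and summing the geometric series in $j$ produces the desired $e^{-c''\ell}$ bound.

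The main obstacle is part (i). Analogous statements for exponential LPP are standard (see e.g.\ \cite{BSS17B,BBB23}), but their Poissonian counterparts demand some care: one must carry out the discretisation of the exit point using monotonicity in endpoints, track the validity range $x\le t^{1/3-\delta}$ of the moderate-deviation inputs (which forces an auxiliary argument for very large $\ell$), and choose a consistent version of the geodesic (e.g.\ the leftmost) so that the ordering statement used in part (ii) is unambiguous. Once part (i) is in hand, the dyadic chaining in part (ii) is essentially mechanical, following the template already used for Brownian LPP.
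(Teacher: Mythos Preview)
Your plan for part (ii) coincides with the paper's: the paper explicitly says that, given (i) and the ordering of geodesics from \cite[Lemma 11.2]{BSS14}, the proof is the same dyadic chaining as in Lemma \ref{lemma: transversal fluctuation}, (ii). So on that half you are aligned with the paper (modulo minor bookkeeping, e.g.\ the exact exponent in your $\exp(-c\ell\alpha^{2j/3})$, which only needs to be summable in $j$).

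For part (i) you take a genuinely different route. The paper does not argue at all: it simply invokes \cite[Theorem 11.1]{BSS14}, which already gives the $e^{-c\ell}$ transversal fluctuation bound for the topmost geodesic in Poissonian LPP, and then uses the reflection symmetry to handle the bottommost geodesic. You instead propose to redo the barrier/curvature argument from scratch (discretise the exit point, use super-additivity and the parabolic loss, then apply Theorems \ref{thm: mod dev upper tail}--\ref{thm: mod dev lower tail} and union bound). That strategy is sound and is indeed how such results are proved in the first place; it would in fact yield $e^{-c\ell^3}$ in the moderate range, but as you note one then has to treat the large-$\ell$ regime separately to accommodate the restriction $x\le t^{1/3-\delta}$ in the tail inputs, and one has to fix a leftmost/rightmost geodesic to make the discretisation clean. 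The paper's route buys brevity by outsourcing all of this to \cite{BSS14}; your route is self-contained and explains where the $e^{-c\ell}$ comes from (and why a sharper cubic rate is expected, as the paper itself remarks after the proposition).
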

\begin{proof} (i) The statement follows from \cite[Theorem 11.1]{BSS14}, where the estimate was proved for topmost geodesic. By symmetry one can conclude the same for the bottom most geodesic. Combining these we have the desired estimate.

(ii) Using (i) and ordering of geodesics \cite[Lemma 11.2]{BSS14}, the proof of (ii) is similar to the proof of Lemma \ref{lemma: transversal fluctuation}, (ii). To avoid repetition we omit the details (see also \cite{BSS17B, BBB23} for a similar proof for the exponential LPP). 
    \end{proof}

\begin{remark}
    Using similar arguments as in the Brownian LPP or exponential LPP case, case it is expected to have an upper bound of $e^{-c \ell^3}$ in both the statements above. However, we are quoting the results from \cite{BSS14} where the weaker version (i) as stated above was proved. The bounds as stated will be sufficient for our purpose.
\end{remark}

Finally we state the transversal fluctuation estimates for the geodesics between $(0,0)$ and $(1,r).$ For any $r>0$ denote $$D_1(r):=\sup\{|\Gamma_{1,r}(x)-rx|: x \in [0,1], \Gamma_{1,r} \text{ is a geodesic between } (0,0) \text{ to } (1,r)\}.$$ We have the following.

\begin{lemma}
    \label{lemma: Poissonian Transversal fluctuation}We have the following: 
    \begin{enumerate}[label=(\roman*), font=\normalfont]
    \item  There exist $r_1,\ell_1$ and $c$ such that for all $r \geq r_1, \ell \geq \ell_1$ we have\[
    \P(D_1(r)\geq \ell r^{5/6}) \leq e^{-c\ell}.
    \]
    \item Let $r_1,\ell_1$ be as above. Then there exists $c'>0$ such that for all $r \geq r_1, \ell\geq \ell_0$ and for all $t \in [0,1]$ with $\sqrt{r}t \geq r_1$
    \[
    \P(\{|\Gamma_{1,r}(t)-rt| \geq \ell r^{5/6}t^{2/3} \text{ for some geodesic } \Gamma_{1,r}\text{ between } (0,0) \text{ and } (1,r)\}) \leq e^{-c'\ell}.
    \]
    \end{enumerate}
\end{lemma}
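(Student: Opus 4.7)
The plan is to reduce both statements directly to Proposition \ref{t:transversal fluctuation in diagonal direction} using the scale invariance of the Poisson point process recorded in Observation \ref{o:poissonian scaling}. The map $\Phi_s:(x,y)\mapsto (sx,y/s)$ preserves two-dimensional Lebesgue measure and therefore sends a rate $1$ Poisson point process to a rate $1$ Poisson point process. It also preserves the up-right order, so it sends geodesics to geodesics; under the choice $s=\sqrt{r}$ the endpoints $(0,0)$ and $(1,r)$ get mapped to $(0,0)$ and $(\sqrt{r},\sqrt{r})$ respectively. Thus, in distribution, any geodesic $\Gamma_{1,r}$ between $(0,0)$ and $(1,r)$ corresponds to a geodesic $\Gamma_{\sqrt{r},\sqrt{r}}$ between $(0,0)$ and $(\sqrt{r},\sqrt{r})$ via $\Gamma_{\sqrt{r},\sqrt{r}}(\sqrt{r}x)=\Gamma_{1,r}(x)/\sqrt{r}$.

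For part (i), starting from the identity $\Gamma_{1,r}(x)=\sqrt{r}\,\Gamma_{\sqrt{r},\sqrt{r}}(\sqrt{r}x)$, one computes
\[
|\Gamma_{1,r}(x)-rx|=\sqrt{r}\,\bigl|\Gamma_{\sqrt{r},\sqrt{r}}(\sqrt{r}x)-\sqrt{r}x\bigr|,
\]
so that the event $\{D_1(r)\ge \ell r^{5/6}\}$ coincides with the event $\{D(\sqrt{r})\ge \ell (\sqrt{r})^{2/3}\}$ after the substitution $y=\sqrt{r}x\in[0,\sqrt{r}]$. Applying Proposition \ref{t:transversal fluctuation in diagonal direction}, (i) with radius $\sqrt{r}$ in place of $r$ yields the bound $e^{-c\ell}$, provided $\sqrt{r}\ge r_0$ and $\ell\ge \ell_0$; these constraints are absorbed by choosing $r_1=r_0^2$ and $\ell_1=\ell_0$.

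For part (ii), the same substitution transforms the local fluctuation bound $|\Gamma_{1,r}(t)-rt|\ge \ell r^{5/6}t^{2/3}$ into $|\Gamma_{\sqrt{r},\sqrt{r}}(y)-y|\ge \ell y^{2/3}$ with $y=\sqrt{r}t$. The hypothesis $\sqrt{r}\,t\ge r_1$ is precisely what is needed to apply Proposition \ref{t:transversal fluctuation in diagonal direction}, (ii) at the point $y$, which gives the desired $e^{-c'\ell}$ bound. The factor $t^{2/3}$ on the right-hand side of the Poissonian statement is exactly the geometric factor that makes the diagonal bound scale-compatible.

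I do not anticipate any genuine obstacle here; the argument is entirely a bookkeeping exercise in the Poissonian scaling identity. The only thing worth double-checking is that the supremum/existential quantifiers over geodesics (when uniqueness may fail at finitely many points of $\Pi$) transport cleanly under $\Phi_{\sqrt{r}}$, which is immediate since $\Phi_{\sqrt{r}}$ is a bijection preserving the relevant order. Consequently, both statements follow directly from Proposition \ref{t:transversal fluctuation in diagonal direction}, and the proof can be compressed into a short paragraph invoking the scaling.
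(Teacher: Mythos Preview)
Your proposal is correct and follows essentially the same route as the paper: both use the Poissonian scaling (the map $\Phi_{\sqrt{r}}$ you describe is exactly what underlies Observation \ref{o:poissonian scaling}) to identify $D_1(r)$ with $\sqrt{r}\,D(\sqrt{r})$ and the local fluctuation at $t$ with the diagonal local fluctuation at $y=\sqrt{r}t$, then apply Proposition \ref{t:transversal fluctuation in diagonal direction}. Your write-up is slightly more explicit about the bijection $\Phi_{\sqrt{r}}$ carrying geodesics to geodesics, but the computations and the choice of thresholds match the paper's proof.
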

\begin{proof}
(i) By Observation \ref{o:poissonian scaling} $D_1(r)\overset{d}{=}\sqrt{r}D(\sqrt{r}) $. Therefore,
\[
\P(D_1(r) \geq \ell r^{5/6})=\P\l(\sqrt{r}D(\sqrt{r}) \geq \ell r^{5/6}\r)=\P\l(D(\sqrt{r}) \geq \ell r^{1/3}\r) \leq e^{-c \ell}
\]
for sufficiently large $r$ and $\ell.$ The last inequality follows from Proposition \ref{t:transversal fluctuation in diagonal direction}, (i).\\
(ii) Again by Observation \ref{o:poissonian scaling}, for $t$ as in the statement,
\begin{align*}
    &\P(\{|\Gamma_{1,r}(t)-rt| \geq \ell r^{5/6}t^{2/3} \text{ for some geodesic} \Gamma_{1,r}\text{ between } (0,0) \text{ and } (1,r)\})\\
    &=\P(\{|\sqrt{r}\Gamma_{\sqrt{r},\sqrt{r}}(\sqrt r t)-rt| \geq \ell r^{5/6}t^{2/3} \text{ for some geodesic } \Gamma_{\sqrt{r},\sqrt{r}} \text{ between } (0,0) \text{ and } (\sqrt{r},\sqrt{r})\})\\
    & \P(\{|\Gamma_{\sqrt{r},\sqrt{r}}(\sqrt rt)-\sqrt{r}t| \geq \ell (\sqrt{r}t)^{2/3} \text{ for some geodesic }\Gamma_{\sqrt r, \sqrt r} \text{ between } (0,0) \text{ and } (\sqrt{r},\sqrt{r})\})\\
    &\leq e^{-c'\ell}
\end{align*}
for sufficiently large $\ell,r$ and $\sqrt{r}t$. The last inequality follows from Proposition \ref{t:transversal fluctuation in diagonal direction}, (ii). This completes the proof. 
\end{proof}
 
\subsection{Proof of Proposition \ref{l:LFL for sequence}}

 To prove Proposition \ref{l:LFL for sequence} we need similar estimates as in Proposition \ref{lem: liminf lower bound}, Proposition \ref{lemma: maximum of point to interval estimates} and Proposition \ref{p:decorr}. We state them now. As mentioned before, the difference with Brownian LPP case is that in this case the decorrelation happens at scale $n^{5/6}$. One reason to expect this is that as we saw in Lemma \ref{lemma: Poissonian Transversal fluctuation} the geodesics fluctuates at a scale $n^{5/6}$. The next three propositions show this precisely. First we state the Propositions analogous to \ref{lem: liminf lower bound} and Proposition \ref{lemma: maximum of point to interval estimates} which shows strong correlation at the scale $k\ll n^{5/6}$.

\begin{proposition}\label{lem: perm liminf lower bound} 
Fix any $\vep,\vep'>0$ small enough there exists $n_{\vep,\vep'}$ and $x_\vep$ such that for all $n\geq n_{\vep,\vep'}$, and $x_{\vep} \leq x \leq n^{1/4}$ we have  
\begin{align*}
    \P\l(\min\limits_{n\leq i\leq n+n^{\frac{5}{6}-\vep'}}\l(\mathcal{L}_{1,i}-2\sqrt{i}\r)i^{-1/6}\leq -x\r)\leq \exp \left(-\frac{1}{12}(1-\vep)x^3 \right).
\end{align*}
\end{proposition}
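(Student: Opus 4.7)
The plan is to mirror the proof of Proposition \ref{lem: liminf lower bound} (the Brownian LPP analogue), using the Poissonian scaling in Observation \ref{o:poissonian scaling} to reduce to a diagonal configuration before applying the decomposition strategy. Taking $s = \sqrt{n}$ in Observation \ref{o:poissonian scaling}, the processes $\{\cl_{1,i}\}_{i \ge 1}$ and $\{\cl_{\sqrt{n}, i/\sqrt{n}}\}_{i \ge 1}$ are equidistributed. Substituting $j = i/\sqrt{n}$ and using the elementary bound $(j\sqrt{n})^{1/6} \ge n^{1/6}(1 - \tfrac{\vep}{2})$ valid on the relevant range of $j$ for $n$ large, it suffices to prove
\[
\P \l( \min_{\sqrt{n} \le j \le \sqrt{n} + n^{1/3 - \vep'}} \l(\cl_{\sqrt{n}, j} - 2\sqrt{j \sqrt{n}}\r) \le -x n^{1/6}\l(1 - \tfrac{\vep}{2}\r) \r) \le \exp\l(-\tfrac{1}{12}(1 - \vep) x^3\r).
\]
In this transformed setup the transversal fluctuation scale of geodesics is $n^{1/3}$ and the endpoint wiggle $n^{1/3 - \vep'}$ is sub-dominant, matching the geometry of the Brownian LPP argument.

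I would then set $r_{\vep'} = n^{1/2 - \vep'/2}$, $r_0 = \sqrt{n} - r_{\vep'}$, and introduce the intermediate diagonal point $v = (r_0, r_0)$. By superadditivity of Poissonian passage times, $\cl_{\sqrt{n}, j} \ge \cl_{r_0, r_0} + \cl_{v \to (\sqrt{n}, j)}$ for every $j$ in the range. A direct Taylor expansion (expanding both $2\sqrt{j \sqrt{n}}$ and $2\sqrt{r_{\vep'}(j - r_0)}$ in the small parameter $\delta = j - \sqrt{n}$) shows that $2 r_0 + 2 \sqrt{r_{\vep'} (j - r_0)} = 2\sqrt{j \sqrt{n}} + O(n^{1/6 - 3 \vep'/2})$ uniformly in $j$, which is negligible compared to $x n^{1/6}$ for $n$ large. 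Consequently, the event of interest is contained in $\cb \cup \cc$, where
\begin{align*}
    \cb & := \l\{\cl_{r_0, r_0} \le 2 r_0 - x n^{1/6}\l(1 - \tfrac{3 \vep}{4}\r)\r\}, \\
    \cc & := \l\{\min_{j} \l(\cl_{v \to (\sqrt{n}, j)} - 2 \sqrt{r_{\vep'} (j - r_0)}\r) \le -\tfrac{\vep}{4} x n^{1/6}\r\}.
\end{align*}

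To bound $\P(\cb)$, observe that by translation invariance and another application of Observation \ref{o:poissonian scaling}, $\cl_{r_0, r_0} \overset{d}{=} \cl_{1, r_0^2}$ with $r_0^2 = n(1 + o(1))$. The relevant standardized lower-tail deviation is $x(1 - \tfrac{3\vep}{4})(1 + o(1))$, and Theorem \ref{thm: mod dev lower tail} is applicable since $x \le n^{1/4} \le (r_0^2)^{1/3 - \delta}$ for sufficiently small $\delta > 0$; this yields $\P(\cb) \le \exp(-\tfrac{1}{12}(1 - \vep) x^3)$ after a slight readjustment of $\vep$. For $\P(\cc)$, translation and scaling give $\cl_{v \to (\sqrt{n}, j)} \overset{d}{=} \cl_{1, r_{\vep'}(j - r_0)}$ with $r_{\vep'}(j - r_0) = n^{1 - \vep'}(1 + o(1))$, so the fluctuation scale is $n^{1/6 - \vep'/6}$. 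The deviation $\tfrac{\vep}{4} x n^{1/6}$ then corresponds to a standardized lower-tail deviation of order $x n^{\vep'/6}$, which remains in the admissible range of Theorem \ref{thm: mod dev lower tail} for small $\vep', \delta > 0$ and $x \le n^{1/4}$. Each single-$j$ probability is therefore bounded by $\exp(-c_\vep x^3 n^{\vep'/2})$, and a union bound over the $O(n^{1/3})$ values of $j$ gives $\P(\cc) \ll \exp(-\tfrac{1}{12}(1 - \vep) x^3)$ for $n$ large.

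The main technical obstacle is the bookkeeping: verifying the Taylor expansion $2 r_0 + 2 \sqrt{r_{\vep'} (j - r_0)} = 2 \sqrt{j \sqrt{n}} + O(n^{1/6 - 3\vep'/2})$ uniformly in $j$, and checking the range conditions of Theorem \ref{thm: mod dev lower tail} in both applications (in particular that the constraint $x \le n^{1/4}$ is compatible with the moderate-deviation range of the theorem for the smaller box appearing in $\cc$). There are no conceptually new ingredients beyond the Brownian LPP proof of Proposition \ref{lem: liminf lower bound}: the Poissonian scaling identity reduces the problem to essentially the same diagonal decomposition, with scale parameter $\sqrt{n}$ replacing $n$.
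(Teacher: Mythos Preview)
Your proposal is correct and follows essentially the same approach as the paper: the paper's proof consists of exactly the same reduction via Observation \ref{o:poissonian scaling} with $s=\sqrt{n}$, after which it simply states that the argument of Proposition \ref{lem: liminf lower bound} goes through verbatim with Theorem \ref{thm: mod dev lower tail} replacing Proposition \ref{p: lower tail estimates}. Your write-up supplies precisely those omitted details (the choice of intermediate diagonal point, the superadditive split into $\cb\cup\cc$, the Taylor expansion, and the two applications of the moderate-deviation lower-tail bound). One harmless slip: in the union bound for $\cc$ the number of admissible $j$ values is $O(n^{5/6-\vep'})$ (one for each integer $i$), not $O(n^{1/3})$, but this polynomial factor is absorbed by the bound $\exp(-c_\vep x^3 n^{\vep'/2})$ just the same.
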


\begin{proof} By Observation \ref{o:poissonian scaling} it suffices to prove that for any $\vep,\vep'>0$ small enough there exists $n_{\vep,\vep'}$ and $x_\vep$ such that for all $n\geq n_{\vep,\vep'}$, and $x_{\vep} \leq x \leq n^{1/4}$ we have
\begin{equation}
\label{eq: Poisson scaled liminf lower bound}
    \P\l(\min\limits_{n\leq i\leq n+n^{\frac{5}{6}-\vep'}}\l(\mathcal{L}_{\sqrt{n},\frac{i}{\sqrt{n}}}-2\sqrt{i}\r)i^{-1/6}\leq -x\r)\leq \exp \left(-\frac{1}{12}(1-\vep)x^3\right).
\end{equation}
Applying the tail estimates for Poissonian last passage time (Theorem \ref{thm: mod dev lower tail}), the rest of the proof is exactly similar to Proposition \ref{lem: liminf lower bound}. We omit the details here.
\end{proof}

Next we state the result analogous to Proposition \ref{lemma: maximum of point to interval estimates}.
\begin{proposition}
    \label{p: Poissonian limsup upper bound}

    Fix any $\vep,\vep'>0$ small enough there exists $n_{\vep,\vep'}$ and $x_\vep$ such that for all $n\geq n_{\vep,\vep'}$, and $x_{\vep} \leq x \leq n^{1/4}$ we have  
\begin{align*}
    \P\l(\max\limits_{n\leq i\leq n+n^{\frac{5}{6}-\vep'}}\l(\mathcal{L}_{1,i}-2\sqrt{i}\r)i^{-1/6}\geq x\r)\leq \exp \left(-\frac{4}{3}(1-\vep)x^{3/2} \right).
\end{align*}
\end{proposition}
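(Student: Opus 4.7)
The plan is to adapt the strategy used for Proposition \ref{lemma: maximum of point to interval estimates} to the Poissonian setting, with Poisson scaling (Observation \ref{o:poissonian scaling}) and the sharp tail estimates (Theorems \ref{thm: mod dev upper tail} and \ref{thm: mod dev lower tail}) replacing their Brownian counterparts. First, by Observation \ref{o:poissonian scaling} applied with $s=\sqrt{n}$, it suffices to establish the bound with $\cl_{1,i}$ replaced by $\cl_{\sqrt{n},i/\sqrt{n}}$; writing $j=i/\sqrt{n}$, this reduces the problem to controlling a maximum over near-diagonal passage times $\cl_{(0,0)\to(\sqrt{n},j)}$ with $j$ on a discrete lattice in $[\sqrt{n},\sqrt{n}+n^{1/3-\vep'}]$.

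The core construction is a diagonal ``reference point'' $v=(\sqrt{n}+n^{1/2-\vep''},\sqrt{n}+n^{1/2-\vep''})$ with $\vep''>0$ chosen small (say $\vep''=\min(\vep'/2,1/24)$), which coordinate-wise dominates every $(\sqrt{n},j)$ in our range. By superadditivity of Poissonian passage times,
\[
\cl_{(0,0)\to v}\ \ge\ \cl_{(0,0)\to(\sqrt{n},j)}+\cl_{(\sqrt{n},j)\to v}
\]
for every admissible $j$. Define the bad upper-tail event
\[
\ca=\l\{\cl_{(0,0)\to v}-2(\sqrt{n}+n^{1/2-\vep''})\ge (1-\vep)xn^{1/6}\r\}
\]
and the good lower-tail event on the point-to-interval pieces,
\[
\cb=\l\{\min_{j}\l[\cl_{(\sqrt{n},j)\to v}-2\sqrt{n^{1/2-\vep''}(\sqrt{n}+n^{1/2-\vep''}-j)}\r]\ge -\tfrac{\vep}{4}xn^{1/6}\r\}.
\]
A short Taylor expansion will show that
\[
2\sqrt{\sqrt{n}\,j}+2\sqrt{n^{1/2-\vep''}(\sqrt{n}+n^{1/2-\vep''}-j)}\ \ge\ 2(\sqrt{n}+n^{1/2-\vep''})-Cn^{1/6}
\]
for a constant $C=C(\vep',\vep'')$, uniformly in $j$; consequently, on the event $\cb$, if $\cl_{(0,0)\to(\sqrt{n},j)}-2\sqrt{\sqrt{n}\,j}\ge xn^{1/6}$ for some admissible $j$, then $\ca$ must occur (taking $x\ge x_\vep$ so that $Cn^{1/6}+\tfrac{\vep}{4}xn^{1/6}\le\vep xn^{1/6}$).

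It remains to bound $\P(\ca)$ and $\P(\cb^c)$. Observing $\cl_{(0,0)\to v}\overset{d}{=}\cl_{1,(\sqrt{n}+n^{1/2-\vep''})^2}$ via Observation \ref{o:poissonian scaling} and applying Theorem \ref{thm: mod dev upper tail} yields $\P(\ca)\le\tfrac{1}{2}\exp(-\tfrac{4}{3}(1-\vep)x^{3/2})$ for $n$ large and $x\in[x_\vep,n^{1/4}]$. For $\cb^c$, each $\cl_{(\sqrt{n},j)\to v}$ has the law of $\cl_{1,t_j}$ with $t_j\asymp n^{1-2\vep''}$, so the scaled deviation is $\tfrac{\vep}{4}xn^{\vep''/3}$ and Theorem \ref{thm: mod dev lower tail} delivers the individual bound $\exp(-c_\vep x^3 n^{\vep''})$; a union bound over the $n^{5/6-\vep'}$ values of $i$ gives $\P(\cb^c)\le n^{5/6-\vep'}\exp(-c_\vep x^3 n^{\vep''})$, which is easily $\ll \exp(-\tfrac{4}{3}(1-\vep)x^{3/2})$ in the allowed range of $x$.

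I expect the main delicacy will be the bookkeeping of the exponent $\vep''$ in the definition of $v$: it has to simultaneously make the Taylor remainder in the sum of means $o(xn^{1/6})$, keep the scaled lower-tail deviation $xn^{\vep''/3}$ inside the $t_j^{1/3-\delta}$ range of validity of Theorem \ref{thm: mod dev lower tail} for $x$ as large as $n^{1/4}$, and leave enough slack so that the union-bound factor $n^{5/6-\vep'}$ is dominated by $\exp(c_\vep x^3 n^{\vep''})$ uniformly over $x\in[x_\vep,n^{1/4}]$. Choosing $\vep''$ on the order of $\vep'/2$ (and bounded by $1/12$) satisfies all three constraints; once $\vep''$ is pinned down, the remaining arithmetic mirrors the Brownian case and is routine.
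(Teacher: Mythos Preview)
Your proposal is correct and follows essentially the same approach as the paper: the paper's own proof simply invokes the Poisson scaling (Observation \ref{o:poissonian scaling}) to reduce to the near-diagonal picture and then says the argument of Proposition \ref{lemma: maximum of point to interval estimates} goes through verbatim with Theorems \ref{thm: mod dev upper tail} and \ref{thm: mod dev lower tail} in place of the Brownian tail bounds. You have filled in exactly those details, with the correct choice of reference point $v$ at the $\sqrt{n}$-scale and the right bookkeeping for the auxiliary exponent $\vep''$.
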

\begin{proof}
By Observation \ref{o:poissonian scaling} it is enough to prove that for any $\vep, \vep'>0$ there exist $n_{\vep,\vep'}, x_{\vep}$ such that for all $n \geq n_{\vep, \vep'}$ and $x_{\vep} \leq x \leq n^{1/20}$
\[
\P \left(\max_{n \leq j \leq n+n^{\frac{5}{6}-\vep'}}\left(\mathcal{L}_{\sqrt{n},\frac{j}{\sqrt{n}}}-2 \sqrt{j}\right)j^{-1/6} \geq x\right) \leq \exp\left(-\frac{4}{3}(1-\vep) x^{3/2}\right).
\]
Applying the tail estimates for Poissonian last passage time (Theorem \ref{thm: mod dev upper tail}) rest of the proof is exactly similar to Proposition \ref{lemma: maximum of point to interval estimates}. We omit the details here.
\end{proof}

Finally analogous to Proposition \ref{p:decorr} we have the following.
\begin{proposition}
    \label{p: Poisson decorr} For all $\vep_1,\vep_2>0$ small enough and $u_k=\lfloor k^{6+\vep_1} \rfloor,$ there exists $\delta=\delta(\vep_1,\vep_2)>0$ and $k_0=k_0(\vep_1)>0$ and a sequence of independent random variables $X_k$ such that for all $k>k_0$
    \[
    \P\left(|X_k-\cl_{1,u_k}| \geq \vep_2 (u_k)^{1/6} \right) \leq \exp(-k^\delta).
    \]
\end{proposition}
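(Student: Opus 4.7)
The plan is to imitate the proof of Proposition \ref{p:decorr}, replacing the Brownian-LPP transversal scale $n^{2/3}$ by the Poissonian scale $n^{5/6}$ from Lemma \ref{lemma: Poissonian Transversal fluctuation}. Given $\varepsilon_1, \varepsilon_2 > 0$ small, fix small positive constants $\mu, \eta, \nu$ to be tuned in terms of $\varepsilon_1$ (the choice $\mu = \eta = \varepsilon_1/100$ and $\nu = \varepsilon_1/300$ will work). For each $n \ge 1$, let $P_n$ denote the parallelogram whose opposite vertical sides are
\[
I_n := \{1\} \times [n - n^{5/6+\mu},\, n + n^{5/6+\mu}], \quad I'_n := \{n^{-\eta}\} \times [n^{1-\eta} - n^{5/6+\mu},\, n^{1-\eta} + n^{5/6+\mu}],
\]
and let $J_n \subset I'_n$ be the sub-interval of vertical half-width $n^{5/6 - \nu}$ centered at $(n^{-\eta}, n^{1-\eta})$. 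Writing $f(a,b) := 2 \sqrt{ab}$ for the mean of $\cl_{a,b}$, I set
\[
X_k := \max_{v \in J_{u_k}} \bigl( f(v) + \cl^{P_{u_k}}_{v \to (1,u_k)} \bigr),
\]
where $\cl^{P_{u_k}}_{v \to (1,u_k)}$ denotes the maximum number of Poisson points on an up-right path from $v$ to $(1, u_k)$ using only points inside $P_{u_k}$.

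Independence of the $X_k$ will reduce to showing pairwise disjointness of $\{P_{u_k}\}_{k \ge k_0}$. The tightest constraint is at $x = u_k^{-\eta}$, where consecutive midlines are at vertical distance $\asymp k^{5 + \varepsilon_1 - (6+\varepsilon_1)\eta}$, to be compared against the parallelogram vertical half-width $u_k^{5/6+\mu} \asymp k^{5 + 5\varepsilon_1/6 + O(\mu)}$. The inequality $6\mu + (6+\varepsilon_1)\eta < \varepsilon_1/6$ suffices, and is comfortably satisfied by the choice above for $\varepsilon_1$ small.

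For \eqref{e:approx}, I will introduce the analogs of the events from Lemma \ref{l:approx}: $\mathcal{A}$, that every geodesic from $v \in J_{u_k}$ to $(1, u_k)$ is confined to $P_{u_k}$; $\mathcal{B}$, that $\Gamma_{1,u_k}$ crosses the line $\{u_k^{-\eta}\} \times \R$ inside $J_{u_k}$; and $\mathcal{C}$, that $\max_{v \in J_{u_k}} |\cl_v - f(v)| \le \varepsilon_2 u_k^{1/6}$. On $\mathcal{A} \cap \mathcal{B} \cap \mathcal{C}$ the superadditivity (concatenation) argument of Lemma \ref{l:approx} transfers verbatim to Poissonian LPP and yields $|\cl_{1, u_k} - X_k| \le \varepsilon_2 u_k^{1/6}$: indeed, if $v^*$ is the point where $\Gamma_{1,u_k}$ meets $J_{u_k}$, confinement gives $\cl_{1,u_k} = \cl_{v^*} + \cl^{P_{u_k}}_{v^* \to (1,u_k)}$, while superadditivity gives $\cl_{1,u_k} \ge \cl_v + \cl^{P_{u_k}}_{v \to (1,u_k)}$ for any $v \in J_{u_k}$. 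The bad events are handled as follows: $\mathcal{C}^c$ is controlled by Theorems \ref{thm: mod dev upper tail}, \ref{thm: mod dev lower tail} applied to each $\cl_v$ (whose fluctuation scale is $(u_k^{-\eta} \cdot u_k^{1-\eta})^{1/6} = u_k^{1/6 - \eta/3}$) together with a polynomial union bound; $\mathcal{B}^c$ is controlled by Lemma \ref{lemma: Poissonian Transversal fluctuation}, (ii) at $t = u_k^{-\eta}$ with $\ell \asymp u_k^{2\eta/3 - \nu}$; and $\mathcal{A}^c$ is controlled by using geodesic ordering in Poisson LPP to reduce to the two extreme $v$'s in $J_{u_k}$, then applying Lemma \ref{lemma: Poissonian Transversal fluctuation}, (i), after using Observation \ref{o:poissonian scaling} to rescale each translated source-target pair to the standard $(0,0)$-to-$(1, \cdot)$ form, with $\ell \asymp u_k^{\mu}$. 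Each bound has the form $\exp(-c u_k^{\delta_0})$ for some $\delta_0 > 0$, which converts to $\exp(-k^\delta)$ via $u_k \asymp k^{6+\varepsilon_1}$.

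The main obstacle will be that Lemma \ref{lemma: Poissonian Transversal fluctuation} provides only $e^{-c\ell}$ tail decay (compared with $e^{-c\ell^3}$ in the Brownian counterpart), which would be too weak if one could only afford polylogarithmic $\ell$. Thankfully $u_k$ is polynomial in $k$, so choosing $\ell$ as a small positive power of $u_k$ still yields a positive power of $k$ in the exponent, and the resulting stretched-exponential bound $\exp(-k^\delta)$ is exactly what is needed.
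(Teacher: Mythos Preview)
Your proposal is correct and follows essentially the same route as the paper: the same parallelogram construction $P_n$ with sides $I_n,I'_n$ and sub-interval $J_n$ (the paper takes $\mu=\varepsilon_1/10$ rather than your $\varepsilon_1/100$, but this is immaterial), the same definition of $X_k=Y_{u_k}$, and the same three events $\ca,\cb,\cc$ to control $|Y_n-\cl_{1,n}|$. The only cosmetic differences are that the paper cites \cite[Lemma 10.2, Lemma 10.6]{BSS14} directly for $\cc^c$ where you propose the pointwise tail estimates plus a union bound, and the paper does not spell out the geodesic-ordering reduction for $\ca^c$ that you mention; both variants work, and your observation that the weaker $e^{-c\ell}$ decay of Lemma~\ref{lemma: Poissonian Transversal fluctuation} is still sufficient because $\ell$ is a positive power of $u_k$ is exactly the point.
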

\begin{proof} To prove Proposition \ref{p: Poisson decorr} we do the following construction which is similar to the proof of Proposition \ref{p:decorr}. For $n\in \N$, consider the vertical intervals $I_{n}=\{1\}\times \l[n-n^{\frac {5}{6}+\frac {\varepsilon_1}{10}},n+n^{\frac 56+\frac {\varepsilon_1}{10}}\r]$ and $I'_{n}=\l\{n^{-\varepsilon_1/100}\r\}\times \l[n^{1-\frac{\varepsilon_1}{100}}-n^{\frac 56+\frac{\varepsilon_1}{10}}, n^{1-\frac{\varepsilon_1}{100}}+n^{\frac 56+\frac{\varepsilon_1}{10}}\r] $. Consider the parallelogram $P_{n}$ with opposite sides $I_{n}$ and $I'_{n}$. Define also the sub-interval $J_{n}$ of $I'_{n}$ given by \\$J_n=\l\{n^{-\varepsilon_1/100}\r\}\times \l[n^{1-\frac{\varepsilon_1}{100}}-n^{\frac 56-\frac{\varepsilon_1}{300}}, n^{1-\frac{\varepsilon_1}{100}}+n^{\frac 56-\frac{\varepsilon_1}{300}}\r]$ (see Figure \ref{fig:liminf upper bound} for the corresponding figure in case of Brownian LPP). For $v=(t,r)\in \R_{+}\times \R_+$, let us for brevity of notation set $f(v)=2\sqrt{tr}$. Let us set 
$$Y_{n}=\max_{v\in J_{n}} \l(f(v)+\cl^{P_n}_{v\to (1,n)}\r)$$
where $\cl^{P_n}_{v\to (1,n)}$ denotes the maximum number of points on an increasing path from $v$ to $(1,n)$ where the maximum is taken over all points contained in $P_n$. Recall the definition of $u_k$. We shall show in Lemma \ref{l: Poisson indep} and Lemma \ref{l: Poisson approx} that if we set $X_{k}=Y_{u_k}$ for all $k$ sufficiently large, it satisfies the conditions in Proposition \ref{p: Poisson decorr}; thus completing the proof of the proposition.
\end{proof}
\begin{lemma}
    \label{l: Poisson indep}
    There exists $k_0=k_0(\varepsilon_1)$ such that the random variables $Y_{u_{k}}$ are independent for $k>k_0$. 
\end{lemma}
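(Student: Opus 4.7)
The plan is to follow the proof of Lemma \ref{l:indep} almost verbatim. I would first observe that $Y_{u_k}$ is a measurable functional of the restriction $\Pi \cap P_{u_k}$: the outer maximum ranges over $J_{u_k} \subset P_{u_k}$, the prefactor $f(v) = 2\sqrt{tr}$ is deterministic, and each constrained passage time $\mathcal{L}^{P_{u_k}}_{v \to (1, u_k)}$ is by definition a function only of the Poisson points lying inside $P_{u_k}$. Given this measurability, it suffices to verify that the parallelograms $\{P_{u_k}\}_{k \ge k_0(\varepsilon_1)}$ are pairwise disjoint as subsets of $\mathbb{R}^2$; the independence of the random variables $Y_{u_k}$ then follows immediately from the standard fact that a Poisson point process has independent restrictions to disjoint Borel sets.

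For the geometric disjointness, I would use that the vertical slice of $P_n$ at any $x \in [n^{-\varepsilon_1/100}, 1]$ is an interval of length $2n^{5/6 + \varepsilon_1/10}$, since $P_n$ is generated as the translate of the vertical interval $I'_n$ along the direction from $I'_n$ to $I_n$. The disjointness then reduces to comparing the gap $u_{k+1} - u_k$ to the combined half-extents of the two parallelograms along the two vertical faces: at $x = 1$ one compares $u_{k+1} - u_k$ with $u_k^{5/6 + \varepsilon_1/10} + u_{k+1}^{5/6 + \varepsilon_1/10}$, and at $x = n^{-\varepsilon_1/100}$ one makes the analogous comparison involving $u_k^{1-\varepsilon_1/100}$. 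Linearity along the slanted faces propagates the inequality across all of $P_{u_k}$, so that once both vertical faces are disjoint so are the whole parallelograms.

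The main (essentially only) work is thus this exponent comparison, which is parallel to the Brownian case with $2/3$ replaced by $5/6$ and $3$ by $6$; no new probabilistic input is needed beyond the Poisson product structure, which is the defining independence property of a Poisson point process. Once disjointness is in hand, the conclusion is immediate.
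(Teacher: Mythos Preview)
Your proposal is correct and follows essentially the same argument as the paper: both establish that $Y_{u_k}$ depends only on $\Pi \cap P_{u_k}$ and that the parallelograms $P_{u_k}$ are pairwise disjoint for large $k$, whence independence follows from the independence of a Poisson process over disjoint regions. The paper's proof is terser (it simply asserts the disjointness is ``easy to check''), while you spell out the exponent comparison explicitly, but the content is identical.
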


\begin{proof}
    It is easy to check that for all $k$ sufficiently large the parallelograms $P_{u_k}$ are disjoint. Since $Y_{u_k}$ depends only on the Poisson points in the parallelogram $P_{u_k}$, the claim follows.
    \end{proof}
\begin{lemma}
    \label{l: Poisson approx}
    For ${\vep_1}, \varepsilon_2>0$ small enough, there exists $\delta>0$ such that for all $n$ sufficiently large we have 
    $$\P\l(\l|Y_n-\cl_{1,n}\r|\ge \varepsilon_2 n^{1/6}\r) \le \exp\l(-n^{\delta}\r).$$
\end{lemma}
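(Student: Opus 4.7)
The plan is to follow the template of Lemma \ref{l:approx} with the Brownian transversal scale $n^{2/3}$ replaced by the Poissonian scale $n^{5/6}$. Introduce the three events
\begin{align*}
\ca &:= \l\{\text{for every } v\in J_n,\ \text{every geodesic from } v \text{ to } (1,n) \text{ is contained in } P_n\r\},\\
\cb &:= \l\{\text{every geodesic } \Gamma_{(0,0),(1,n)} \text{ intersects the line } \{x=n^{-\vep_1/100}\} \text{ inside } J_n\r\},\\
\cc &:= \l\{\sup_{v \in J_n}\l|\cl_{(0,0),v}-f(v)\r| \le \vep_2 n^{1/6}\r\}.
\end{align*}
On $\ca\cap\cb\cap\cc$, the same superadditivity argument as in Lemma \ref{l:approx} gives $|Y_n-\cl_{1,n}|\le \vep_2 n^{1/6}$: the crossing point $v^{*}\in J_n$ produced by $\cb$ splits a geodesic $\Gamma_{(0,0),(1,n)}$, and its subpath from $v^{*}$ to $(1,n)$, which is itself a geodesic, lies in $P_n$ by $\ca$, so $\cl_{1,n}=\cl_{v^{*}}+\cl^{P_n}_{v^{*}\to(1,n)}\le Y_n+\vep_2 n^{1/6}$ via $\cc$; conversely, superadditivity $\cl_{1,n}\ge \cl_v+\cl^{P_n}_{v\to(1,n)}$ for every $v\in J_n$, combined with $\cc$, yields $\cl_{1,n}\ge Y_n-\vep_2 n^{1/6}$ after maximising over $v$.

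It therefore suffices to bound each of $\P(\ca^c),\P(\cb^c),\P(\cc^c)$ by $\exp(-n^{\delta})$ for some $\delta=\delta(\vep_1,\vep_2)>0$. For $\cb^c$ I would apply Lemma \ref{lemma: Poissonian Transversal fluctuation}(ii) at height $t=n^{-\vep_1/100}$ to the geodesic from $(0,0)$ to $(1,n)$: the typical vertical fluctuation there is of order $n^{5/6}\cdot t^{2/3}=n^{5/6-\vep_1/150}$, while $J_n$ has half-width $n^{5/6-\vep_1/300}$, so taking $\ell\asymp n^{\vep_1/300}$ yields a bound of order $\exp(-c n^{\vep_1/300})$. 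For $\ca^c$, by the ordering/sandwiching of geodesics (Lemma 11.2 of \cite{BSS14}), it suffices to control the geodesics from the top and bottom endpoints of $J_n$ to $(1,n)$; applying Lemma \ref{lemma: Poissonian Transversal fluctuation}(i) to these two after a diagonal rescaling via Observation \ref{o:poissonian scaling} (which converts the $(1-n^{-\vep_1/100})\times (n-n^{1-\vep_1/100})$ corridor into a square) gives the bound $\exp(-c n^{\vep_1/10})$ because the vertical half-width $n^{5/6+\vep_1/10}$ of $P_n$ exceeds the typical transversal scale by the factor $n^{\vep_1/10}$.

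The remaining, and main, step is $\P(\cc^c)$. In contrast to the Brownian setting where the corresponding set is a discrete collection of integer lattice points, here $J_n$ is a continuous vertical segment, so a direct union bound is unavailable. I would discretise $J_n$ into a net $v_1<v_2<\cdots<v_N$ of spacing $n^{-10}$, so that $N$ is polynomial in $n$. Two facts make this effective: (a) the map $v\mapsto \cl_{(0,0),v}$ is monotone non-decreasing in the second coordinate; and (b) $v\mapsto f(v)=2\sqrt{sr}$ has $\partial_r f=\sqrt{s/r}\asymp n^{-1/2}$ on $J_n$, so $|f(v_{i+1})-f(v_i)|\ll n^{1/6}$. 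Together these sandwich $\cl_v-f(v)$ between $\cl_{v_i}-f(v_i)\pm o(n^{1/6})$ for $v\in[v_i,v_{i+1}]$, so $\cc^c$ is contained in $\bigcup_i \{|\cl_{v_i}-f(v_i)|>\tfrac12 \vep_2 n^{1/6}\}$. After rescaling each $\cl_{v_i}=\cl_{(s,r_i)}\stackrel{d}{=}\cl_{1,sr_i}$ via Observation \ref{o:poissonian scaling}, the deviation $\tfrac12\vep_2 n^{1/6}$ corresponds to $x\asymp n^{\vep_1/300}$ in the standard scaling, so Theorems \ref{thm: mod dev upper tail} and \ref{thm: mod dev lower tail} give a per-point bound of order $\exp(-c n^{\vep_1/200})$; the polynomial union bound is then negligible.

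I expect the main obstacle to be executing the discretisation-and-monotonicity argument for $\cc$ cleanly at the right scale, since the continuous nature of $J_n$ has no analogue in the Brownian LPP proof; once this is in place, the remaining pieces are essentially direct translations of Lemma \ref{l:approx}, with the scale $n^{5/6}$ replacing $n^{2/3}$ throughout.
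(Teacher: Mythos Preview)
Your proposal is correct and follows the paper's three-event decomposition $\ca,\cb,\cc$ exactly, with the same deterministic sandwich on the intersection and the same use of Lemma~\ref{lemma: Poissonian Transversal fluctuation}(i),(ii) for $\ca^c,\cb^c$.

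The one genuine difference is your treatment of $\cc^c$. The paper handles the continuous interval $J_n$ by quoting the ready-made point-to-segment estimates \cite[Lemma 10.2, Lemma 10.6]{BSS14}, whereas you give a self-contained discretisation-plus-monotonicity argument that reduces to the pointwise tail bounds (Theorems~\ref{thm: mod dev upper tail} and~\ref{thm: mod dev lower tail}) via Observation~\ref{o:poissonian scaling}. Your route is longer but avoids the external citation and makes explicit why continuity of $J_n$ is not an obstacle (monotonicity of $r\mapsto \cl_{(0,0),(s,r)}$ and the Lipschitz bound $\partial_r f\asymp n^{-1/2}$ do all the work; your mesh $n^{-10}$ is generous overkill, anything $o(n^{2/3})$ would suffice). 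Similarly, for $\ca^c$ your sandwiching by the extreme geodesics via \cite[Lemma 11.2]{BSS14} is a clean way to handle the continuum of starting points $v\in J_n$, which the paper's terse reference to Lemma~\ref{lemma: Poissonian Transversal fluctuation}(i) leaves implicit. Both approaches lead to the same $\exp(-n^{\delta})$ bound.
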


\begin{proof}Consider the event $\ca$ that for all $v\in J_n$, all geodesics $\Gamma_{v, (1,n)}$ are contained in $P_n$. Next, consider the event 
    $$\cb=\l\{\Gamma_{1,n}\l(n^{-\varepsilon_1/100}\r) \subseteq J_{n} \text{ for all geodesics} \Gamma_{1,n} \text{ from } (0,0) \text{ to } (1,n)\r\}$$
    i.e., all geodesics from $(0,0)$ to $(1,n)$  intersect the line $x=(n^{-\varepsilon_1/100})$ within the interval $J_n$. Finally, consider the event 
    $$\cc=\left \{\max_{v\in J_n}: |\cl_v-f(v)|\le \varepsilon_2n^{1/6} \right \}.$$
    Arguing similarly as in the proof of lemma \ref{l:approx}, on the event $\ca\cap \cb \cap \cc$ we have 
    $|\cl_{1,n}-Y_n|\le \varepsilon_2n^{1/6}$. 
   
    So it suffices to show 
    $\P(\ca^c\cup \cb^c \cup \cc^c)\le \exp(-n^{\delta})$. For the probability of the events $\ca^c$ (resp.\ $\cb^c$) we use Lemma \ref{lemma: Poissonian Transversal fluctuation}, $(i)$ (resp.\ $(ii)$). For the probability of the event $\cc^c$ we use \cite[Lemma 10.2, Lemma 10.6]{BSS14}. So, as we have argued in the proof of Lemma \ref{l:approx} we get the desired upper bound in this case.
    \end{proof}
    \begin{proof}[Proof of Proposition \ref{l:LFL for sequence}]
    Using Proposition \ref{lem: perm liminf lower bound}, \ref{p: Poissonian limsup upper bound} and \ref{p: Poisson decorr} we obtain Proposition \ref{l:LFL for sequence} exactly similarly as we have proved Theorem \ref{t:LPP} and Theorem \ref{t:LPPupper}. In particular, for the limsup upper bound using Proposition \ref{p: Poissonian limsup upper bound} we show that for any $\vep>0$ there exists $\delta>0$ such that 
    \[
    \P \left(\max_{n_k \leq i \leq n_{k+1}}  \frac{\left({\cl_{1,i}-2\sqrt{i}}\right){i^{-1/6}}}{(\log i)^{1/3}} \geq \l(\frac 18+\vep\r)^{2/3}\right) \leq \frac{1}{k^{1+\delta}}
    \]
    where $n_k=k^{6-36\vep'}$ for some small enough $\vep'$ depending on $\vep$ (note that this subsequence is different than the one considered in the proof of Theorem \ref{t:LPPupper}. This is because the decorrelation happens in this case at scale $n^{5/6}$). This proves the limsup upper bound. For the limsup lower bound we show that 
    $$\limsup_{k\to \infty} \dfrac{u_k^{-1/6}\l(Y_{k}-2\sqrt{u_k}\r)}{(\log u_k)^{1/3}}\ge \l(\frac 18 \r)^{2/3},$$
    almost surely, where $Y_k$ is as in Proposition \ref{p: Poisson decorr} and $u_k$ is now $\lfloor k^{6+\vep_1}\rfloor$ as defined in Proposition \ref{p: Poisson decorr}. Arguing similarly as before we obtain the lower bound. The proof of liminf follows similarly by applying Proposition \ref{lem: perm liminf lower bound} and \ref{p: Poisson decorr}. 
        \end{proof}
       
\section{The 0-1 laws}
\label{s:01}

We end with the proof of Proposition \ref{p:01}.
\begin{proof}[Proof of Proposition \ref{p:01}]
We prove the first part of the Proposition for limsup. The result for the liminf follows similarly. For $n\geq i$, we let $T_{n}^{(i)}=D_{(0,i)\rightarrow (1,n)}$ denote the passage time to $(1,n)$ using only the Brownian motions $\{B_{\ell}\}_{\ell\geq i}$. In particular $T_{n}^{(1)}$ equals $D_{1,n}$. It is immediate that almost surely $T_{n}^{(i)}$ is non-increasing in $i$ for all $i\leq n$. Hence we have that almost surely the random variables
\begin{align*}
\xi_i:=\limsup\limits_{n\rightarrow\infty}\frac{\left({T_{n}^{(i)}-2\sqrt{n}}\right){n^{1/6}}}{f(n)}
\end{align*}
are non-increasing in $i$. Also note that for fixed $i, (\sqrt{n}-\sqrt{n-i})n^{1/6}=O(n^{-1/3})$ as $n \rightarrow \infty$. Hence, by the conditions on $f$ we also have 
\begin{align*}
\xi_i=\limsup\limits_{n\rightarrow\infty}\frac{\left({T_{n}^{(i)}-2\sqrt{n-i}}\right){(n-i)^{1/6}}}{f(n-i)}.
\end{align*}
Therefore, $(\xi_i)_{i\geq 1}$ is a stationary  sequence.   As $\xi_{i+1}\le \xi_{i}$ 

and the two have the same distribution, $\xi_i=\xi_1$ for all $i$, almost surely. As $\xi_i$ depends on $\{B_\ell\}_{\ell \geq i}$ it follows that $\xi_1 \in \bigcap_{i \geq 1}\sigma \left( B_{i},B_{i+1}, \dots  \right)$. Hence, $\xi_1$ is tail measurable for the i.i.d. sequence $(B_{\ell})_{\ell \geq 1}$. It follows that that $\xi_1$ is a constant random variable.

The proof for Poissonian case is similar. Consider the set-up of Poissonian LPP. For $n\geq i$, we let $\cl_{1,t}^{(i)}=\cl_{(0,i)\rightarrow (1,t)}$ denote the passage time to $(1,t)$ using only the Poisson points above the line $y=i$. In particular $\cl_{1,t}^{(0)}$ equals $\cl_{1,t}$. It is immediate that almost surely $\cl_{1,t}^{(i)}$ is non-increasing in $i$ for all $i\leq t$. Hence we have that almost surely the random variables
\begin{align*}
\psi_i:=\limsup\limits_{t\rightarrow\infty}\frac{\left({\cl_{1,t}^{(i)}-2\sqrt{t}}\right)}{t^{1/6}f(t)}
\end{align*}
are non-decreasing. Arguing as before we also get that $(\psi_i)_{i \geq 0}$ is a stationary sequence. Hence, in this case also $\psi_i=\psi_0$ for all $i$ almost surely. As $\psi_i$ depends on Poisson points from above the line $y=i,$ it follows that $\psi_0$ is tail measurable for the i.i.d. sequence $\zeta_i=\Pi\cap \l( [0,1]\times[i,i+1)\r) $ where $\Pi$ is the underlying Poisson point process. This shows that $\psi_0$ is constant almost surely. The proof of the liminf case is similar.
\end{proof}

 \bibliography{references}
 \bibliographystyle{plain}

\end{document}